
\documentclass[11pt,leqno,a4paper]{amsart}
\usepackage{amscd,amssymb}
\usepackage[all]{xy}

\title[A negative solution to Rosick\'y's problem]
{On the abelianization of derived categories and a negative solution to Rosick\'y's problem}
\author{Silvana Bazzoni}
\address{
Dipartimento di Matematica Pura e Applicata \\
Universit\'a di Padova \\
Via Trieste 63, 35121 Padova, Italy
}
\email{bazzoni@math.unipd.it}

\author{Jan \v{S}\v{t}ov\'\i\v{c}ek}
\address{
Charles University in Prague, Faculty of Mathematics and Physics \\
Department of Algebra \\
Sokolovska 83, 186 75 Praha 8, Czech Republic
}
\email{stovicek@karlin.mff.cuni.cz}

\subjclass[2010]{16D90, 18G25 (primary), 16G99, 18C35, 18E30, 20K25, 55U35 (secondary).}
\keywords{Purity; Higher pure global dimension; Derived category; Adams representability; Abelianization; Rosick\'y functor}

\thanks{%
First named author supported by MIUR, PRIN 2007, project ``Rings, algebras, modules and categories'' and by Universit\`{a} di Padova (Progetto di Ateneo
CPDA071244/07 ``Algebras and cluster categories'').
The second named author supported by GA\v{C}R~P201/10/P084, research project MSM~0021620839 and the DFG~Schwerpunkt SPP 1388.
}

\date{\today}


\newtheorem{Thm}{Theorem}[section]
\newtheorem{Lem}[Thm]{Lemma}
\newtheorem{Cor}[Thm]{Corollary}
\newtheorem{Prop}[Thm]{Proposition}

\newtheorem*{Prob}{Problem}
\theoremstyle{definition}
\newtheorem{Def}[Thm]{Definition}
\newtheorem{Constr}[Thm]{Construction}
\theoremstyle{remark}
\newtheorem{Rem}[Thm]{Remark}
\newtheorem{Ex}[Thm]{Example}

\DeclareMathOperator{\Hom}{Hom}

\DeclareMathOperator{\Add}{Add}
\DeclareMathOperator{\add}{add}
\DeclareMathOperator{\Ext}{Ext}
\DeclareMathOperator{\End}{End}

\DeclareMathOperator{\Ker}{Ker}
\DeclareMathOperator{\Coker}{Coker}

\DeclareMathOperator{\fp}{fp}
\DeclareMathOperator{\fl}{finlen}

\DeclareMathOperator{\pd}{proj.dim}
\DeclareMathOperator{\lppd}{\lambda-pure\ proj.dim}
\DeclareMathOperator{\lpgldim}{\lambda-pure\ gl.dim}

\newcommand{\Mod}{\mathrm{Mod}\textrm{-}}
\newcommand{\rmod}{\mathrm{mod}\textrm{-}}
\newcommand{\lMod}{\textrm{-}\mathrm{Mod}}

\newcommand{\Der}[1]{\mathbf{D}({#1})}
\newcommand{\KronAlg}{k(\cdot\!\rightrightarrows\!\cdot)}
\newcommand{\powS}{k[\![x]\!]}
\newcommand{\powSxy}{k[\![x,y]\!]}
\newcommand{\Ab}{\mathbf{Ab}}

\newcommand*{\la}{\longrightarrow}

\newcommand*{\cclass}[1]{\mathcal{#1}}
\newcommand*{\p}{p}

\newcommand*{\A}{\cclass{A}}
\newcommand*{\B}{\cclass{B}}
\newcommand*{\C}{\cclass{C}}

\newcommand*{\E}{\cclass{E}}

\newcommand*{\G}{\cclass{G}}

\newcommand*{\N}{\cclass{N}}

\newcommand*{\ES}{\cclass{S}}
\newcommand*{\T}{\cclass{T}}

\newcommand*{\NN}{\mathbb{N}}
\newcommand*{\Z}{\mathbb{Z}}


\renewcommand{\iff}{if and only if }
\newcommand{\st}{such that }
\newcommand{\wrt}{with respect to }

\begin{document}

\begin{abstract}
We prove for a large family of rings $R$ that their $\lambda$-pure global dimension is greater than one for each infinite regular cardinal~$\lambda$. This answers in negative a problem posed by Rosick\'y. The derived categories of such rings then do not satisfy the Adams $\lambda$-representability for morphisms for any~$\lambda$. Equivalently, they are examples of well generated triangulated categories whose $\lambda$-abelianization in the sense of Neeman is not a full functor for any~$\lambda$. In particular we show that given a compactly generated triangulated category, one may not be able to find a Rosick\'y functor among the $\lambda$-abelianization functors.
\end{abstract}

\maketitle

\section*{Introduction}
\label{sec:intro}

Our main goal is to give simple examples of triangulated categories where an attempt to apply Neeman's representability theorems from~\cite{Nee2}, which are based on existence of Ro\-sic\-k\'{y} functors, hits serious obstacles. We achieve this by answering in negative a problem posed by Rosick\'y~\cite{RosT}:

\begin{Prob}
Given a Grothendieck category $\G$, is there an infinite regular cardinal $\lambda$ \st the $\lambda$-pure global dimension of $\G$ is at most one?
\end{Prob}

We recall that~\cite{Nee2} is a part of a program founded in~\cite{Nee}, which seeks to build up theory of triangulated categories without using models. This has proved to be a very useful approach which has found several applications in algebraic topology, representation theory, K-theory or algebraic geometry. An important ingredient of the theory are  homological functors $H\colon \T \to \A$ from a triangulated category $\T$ to an abelian category $\A$, enjoying suitable universal properties. The category $\A$ can be viewed as an ``abelian approximation'' of $\T$, and we call such a functor $H$ an abelianization of $\T$.

\smallskip

Let us be a little more specific. Partial negative results regarding Rosick\'y's problem above have already been available. Namely, Trlifaj~\cite{Tr} has proved that there is no such $\lambda$ for locally Grothendieck categories and Braun and G\"{o}bel~\cite{BrGoe} have shown that the only possible cardinality for the category of abelian groups is $\lambda = \aleph_0$. Here we give a complete negative solution by showing that the $\lambda$-pure global dimension of $\G = \Mod R$ is strictly greater than one for all $\lambda$ for a wide range of rings, including
\[
R = \mathbb{C}[x,y] \qquad \mathrm{and}
\qquad R = \begin{pmatrix}
\mathbb{C} & \mathbb{C}^2             \\
0          & \mathbb{C}\phantom{^2}   \\
\end{pmatrix}.
\]

Knowing this has significance for the theory of triangulated categories. Namely, given a ring $R$ as above, the derived category $\T = \Der{R}$ is a compactly generated triangulated category. It has proved very useful to study such categories using the category $\A_{\aleph_0}(\T)$ of all contravariant additive functors $\T^c \to \Ab$. Here, $\T^c \subseteq \T$ is the full subcategory of all compact objects and the abelianization functor is $H_{\aleph_0}\colon \T \to \A_{\aleph_0}(\T)$ sending $X$ to $\Hom_\T(-,X) |_{\T^c}$. $H_{\aleph_0}$ is usually not faithful, but as Neeman proved in~\cite[\S5]{Nee3}, inspired by classical results in algebraic topology~\cite{Bro,Adams}, it is full in some important cases. It would be favorable if $H_{\aleph_0}$ were full for all such categories, among others because of the representability theorems from~\cite{Nee2}, but it is not for $\T = \Der{\mathbb{C}[x,y]}$ as Neeman also showed in~\cite{Nee3}.

Later in~\cite{Nee}, together with founding the theory of well generated triangulated categories, Neeman discovered a good notion for the $\lambda$-abelianization $H_\lambda\colon \T \to \A_\lambda(\T)$ for any infinite regular cardinal $\lambda$. Roughly speaking, the larger $\lambda$ is, the more information $\A_\lambda(\T)$ retains about $\T$. It seemed therefore natural when a result appeared in the literature~\cite{Ros,RosE} claiming that there would always be a large enough $\lambda$  \st $H_\lambda$ would be full. In fact, such a result would be very welcome, since $H_\lambda$ would then be what Neeman calls a Rosick\'y functor, and the existence of a Rosick\'y functor for a given triangulated category has striking consequences. We again refer to Neeman's~\cite{Nee2} for details. Unfortunately, there was a gap in the argument about fullness of $H_\lambda$.

What we show here implies that $H_\lambda$ may not be full for any regular $\lambda$ and that the derived categories of the rings above are examples. Surprisingly, the situation is even more intricate in that for $\T = \Der\Ab$, the only $\lambda$-abelianization functor which is actually full is $H_{\aleph_0}$.

To summarize, we rule out the natural candidates for Rosick\'y functors for many innocent looking triangulated categories. Thus, if Rosick\'y functors do exist in broader generality, one presumably needs to develop new techniques to construct them.

\smallskip

Let us now spend a few words on how the paper is organized. In Section~\ref{sec:balanced} we extend some classical results and constructions from the theory of abelian $\p$-groups, where $\p$ is a prime number, to all discrete valuation domains. In Section~\ref{sec:adic_topology}, we use the $\p^\lambda$-adic topology to get a lower bound on $\lambda$-pure projective dimension. In Section~\ref{sec:pproj_dim} we show how to transfer this lower bound to other module categories and give first examples of module categories which do not have the property predicted by Rosick\'y. The paper is concluded by two independent sections. Section~\ref{sec:examples} gives a recipe to construct many more examples which have $\lambda$-pure global dimension at least two for all regular cardinals $\lambda$, while in Section~\ref{sec:conseq} we discuss the consequences of our results for the theory of triangulated categories.

\subsection*{Acknowledgments}
The first named author is indebted to Luigi Salce for pointing out the results on the completion of the $\p^{\lambda}$-adic topology on abelian $\p$-groups.
The second named author would like to thank Adam-Christiaan van Roosmalen and Jan Trlifaj for helpful discussions and especially for some very relevant references.

\section{Balanced exact sequences and Walker's modules $P_\beta$}
\label{sec:balanced}

This section gives an account on and extends results by Walker~\cite{W} and Salce~\cite{S}; we also refer to~\cite{Fu} for more background. We assume throughout the section that $R$ is a discrete valuation domain and $\p \in R$ is a prime in $R$, which is then unique up to multiplication by a unit. 
All we do in this section can be in fact extended to (in general non-commutative!) hereditary noetherian uniserial rings; see Section~\ref{sec:examples}. The most important examples for us here are:

\begin{itemize}
 \item $\powS$, the ring of formal power series over a field $k$ (with $\p = x$),
 \item $\hat\Z_\p$, the $\p$-adic completion of the ring of integers for a prime $\p \in \Z$.
\end{itemize}

We will study infinitely generated torsion $R$-modules. For $R = \hat\Z_\p$, we precisely reconstruct the setting of~\cite{W,S}, where abelian $\p$-groups are in the focus.

Using the notation from~\cite[\S XI.65]{Fu}, for every $R$-module $G$ and every ordinal $\sigma$ we inductively define:
\[
\p^{\sigma}G =
\begin{cases}
G & {\rm if\ } \sigma = 0\\
\p (\p^{\sigma-1}G), & {\rm if\ } \sigma\ {\rm is\ non\ limit.}\\
\bigcap\limits_{\rho<\sigma}\p^{\rho}G, & {\rm if\ } \sigma\ {\rm is\ limit.}
\end{cases}
\]

Note that $\p G$ is under our assumption just the Jacobson radical of $G$. Using the customary notation, we further put
\[ G[r] = \{x \in G \mid rx = 0\}, \]
for any $r \in R$. Note that $G[\p]$ is none other than the socle of $G$. Given $n \in \NN$, we will also write $\p^\sigma G[\p^n]$, meaning $(\p^\sigma G)[\p^n]$.

The \emph{length} $l(G)$ of a module $G$ is the minimum ordinal $\lambda$ such that $\p^{\lambda+1}G=\p^{\lambda}G$. If $l(G)=\lambda$, then $\p^{\lambda}G$ is divisible as an $R$-module and coincides with the unique maximal divisible submodule $d(G)$ of $G$. Thus, $G$ is \emph{reduced} (i.e. has no divisible submodules), if and only if $\p^{\lambda}G=0$.

If $G$ is an $R$-module and $x\in G$, the \emph{height} $h_G(x)$ of $x$ in $G$ is defined by:
\[
h_G(x) =
\begin{cases}
\sigma & {\rm if\ } x\in \p^{\sigma}G\setminus \p^{\sigma+1}G \\
\infty & {\rm if\ } x\in \bigcap\limits_{\sigma}\p^{\sigma}G
\end{cases}
\]

Now we recall important concepts introduced by Kulikov \cite{Kul}.

\begin{Def} \label{def:proper_and_nice}
Let $G$ be a torsion $R$-module. An element $x\in G$ is \emph{proper} with respect to a submodule $N$ if $h_G(x)\geq h_G(x+u)$ for every $u\in N$.
A submodule $N$ of $G$ is \emph{nice} if every coset $x+N$ contains an element proper with respect to $N$.
\end{Def}

We recall that proper elements \wrt a submodule are characterized be the following easy lemma.

\begin{Lem} Let $N$ be a submodule of a torsion module $G$ over a discrete valuation domain.
Let $x\in G$ with $h_G(x)=\sigma$. Then, $x$ is proper with respect to $N$ if and only if $x\notin \p^{\sigma+1}G+N$.
\end{Lem}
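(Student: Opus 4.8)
The plan is to prove the equivalence by unwinding the definition of height and using that $\p^{\sigma+1}G$ is precisely the set of elements of height $> \sigma$ (together with the elements of infinite height, which for a torsion module over a discrete valuation domain we can handle uniformly).

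\medskip

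\emph{Proof sketch.} Suppose first that $x$ is proper with respect to $N$, and assume for contradiction that $x \in \p^{\sigma+1}G + N$, say $x = g + u$ with $g \in \p^{\sigma+1}G$ and $u \in N$. Then $x + (-u) = g$, so $h_G(x+(-u)) = h_G(g) \geq \sigma+1 > \sigma = h_G(x)$, contradicting properness (properness applied to the element $-u \in N$). Hence $x \notin \p^{\sigma+1}G + N$.

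\medskip

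Conversely, suppose $x \notin \p^{\sigma+1}G + N$, and let $u \in N$ be arbitrary; we must show $h_G(x+u) \leq \sigma$, i.e.\ $x + u \notin \p^{\sigma+1}G$. Indeed, if $x + u \in \p^{\sigma+1}G$, then $x = (x+u) + (-u) \in \p^{\sigma+1}G + N$, again because $N$ is a submodule and hence $-u \in N$; this contradicts our hypothesis. Therefore $h_G(x+u) \leq \sigma$ for every $u \in N$, which says exactly that $h_G(x) = \sigma \geq h_G(x+u)$ for all $u \in N$, i.e.\ $x$ is proper with respect to $N$.

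\medskip

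The only subtlety worth flagging is the role of the hypothesis $h_G(x) = \sigma$: it guarantees that $x \notin \p^{\sigma+1}G$ to begin with (so the ``only if'' direction is not vacuous), and it fixes the ordinal against which properness is tested; no genuine obstacle arises, as both implications reduce to the single observation that $\p^{\sigma+1}G + N$ is closed under adding elements of $N$. The torsion hypothesis and the discrete valuation domain assumption are used only insofar as they make the height function and the submodules $\p^\sigma G$ behave as recalled above. \qed
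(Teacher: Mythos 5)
Your argument is correct: both directions reduce to the observation that $\p^{\sigma+1}G+N$ absorbs elements of $N$, together with the fact that the chain $\p^{\rho}G$ is decreasing so that $x+u\notin\p^{\sigma+1}G$ forces $h_G(x+u)\le\sigma$. This is exactly the routine unwinding of the definitions that the paper dismisses as obvious (with a pointer to Fuchs), so you have simply filled in the same argument the authors had in mind.
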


\begin{proof}
Obvious, see also~\cite[\S XII.77]{Fu}.
\end{proof}

Although it may not be clear directly from the definition, nice submodules are related to exactness of the functors $\p^\sigma(-): \Mod R \to \Mod R$. Namely, by the same argument as in~\cite{Fu} one obtains:

\begin{Lem} \label{lem:nice_char}
A submodule $N$ of a torsion module $G$ over a discrete valuation domain is nice \iff $\p^{\sigma}(G/N)=(\p^{\sigma}G+N)/N$ for all ordinals~$\sigma$.
\end{Lem}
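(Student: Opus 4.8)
The plan is to prove the equivalence by unwinding the definition of niceness and performing an induction on the ordinal $\sigma$. The statement to establish is that $N \subseteq G$ is nice if and only if $\p^{\sigma}(G/N) = (\p^{\sigma}G + N)/N$ for all ordinals $\sigma$; equivalently, writing $\pi\colon G \to G/N$ for the quotient map, that $\p^\sigma(G/N) = \pi(\p^\sigma G)$ for all $\sigma$. One inclusion is automatic: since $\pi$ is a surjective homomorphism, $\pi(\p^\sigma G) \subseteq \p^\sigma(G/N)$ holds for every $\sigma$ with no hypothesis on $N$ (multiplication by $\p$ and intersections are compatible with images in the obvious direction). So the content is the reverse inclusion $\p^\sigma(G/N) \subseteq \pi(\p^\sigma G)$, and it is exactly this inclusion that should be equivalent to niceness.

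First I would prove the ``only if'' direction: assume $N$ is nice and show $\p^\sigma(G/N) \subseteq \pi(\p^\sigma G)$ by transfinite induction on $\sigma$. The case $\sigma = 0$ is trivial. For a successor $\sigma = \tau + 1$, take $\bar y \in \p^{\tau+1}(G/N) = \p\bigl(\p^\tau(G/N)\bigr)$, so $\bar y = \p \bar z$ with $\bar z \in \p^\tau(G/N)$; by the inductive hypothesis $\bar z = \pi(z)$ for some $z \in \p^\tau G$, hence $\bar y = \pi(\p z)$ with $\p z \in \p^{\tau+1}G$. The successor step in fact needs nothing beyond the inductive hypothesis. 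The limit step is where niceness enters: given $\bar y \in \p^\sigma(G/N) = \bigcap_{\rho < \sigma} \p^\rho(G/N)$, choose by niceness a representative $y \in G$ of the coset $\bar y$ that is proper with respect to $N$, and argue that $h_G(y) \geq \sigma$. Indeed, for each $\rho < \sigma$ we have $\bar y \in \p^\rho(G/N) = \pi(\p^\rho G)$ by the inductive hypothesis, so $y \in \p^\rho G + N$; properness of $y$ (via the preceding easy lemma, $y \notin \p^{h_G(y)+1}G + N$) then forces $h_G(y) \geq \rho$ for all $\rho < \sigma$, whence $h_G(y) \geq \sigma$, i.e. $y \in \p^\sigma G$ and $\bar y = \pi(y) \in \pi(\p^\sigma G)$. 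This limit step, balancing properness against the heights $\rho < \sigma$, is the main obstacle and the one place the hypothesis is genuinely used.

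For the ``if'' direction, assume $\p^\sigma(G/N) = \pi(\p^\sigma G)$ for all $\sigma$ and let $x + N$ be an arbitrary coset; I must produce an element proper with respect to $N$ in it. Let $\sigma = h_{G/N}(x+N)$ be the height of the coset in the quotient (this is some ordinal since $G$, hence $G/N$, is torsion and reduced-or-not this is still well-defined, with the convention $\infty$ handled separately or excluded by the length being an ordinal). Then $x + N \in \p^\sigma(G/N) = \pi(\p^\sigma G)$, so there is $y \in \p^\sigma G$ with $\pi(y) = x + N$, i.e. $y = x + u$ for some $u \in N$. I claim $y$ is proper with respect to $N$: for any $u' \in N$, $\pi(y + u') = x + N$, so $h_{G/N}(\pi(y+u')) = \sigma$, and since $h_G(y+u') \le h_{G/N}(\pi(y+u')) = \sigma \le h_G(y)$ (the first inequality because $\pi$ cannot decrease height in the sense that $\pi(\p^\tau G) \subseteq \p^\tau(G/N)$, and the last because $y \in \p^\sigma G$), we get $h_G(y) \ge h_G(y+u')$ for all $u' \in N$, which is precisely properness. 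This completes the proof. Throughout, I would simply cite~\cite[\S XII.77]{Fu} for the analogous computations over $\hat\Z_\p$ and remark that every step goes through verbatim over an arbitrary discrete valuation domain, since the only ring-theoretic input is that $\p$ generates the maximal ideal and that the relevant modules are torsion.
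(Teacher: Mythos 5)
Your proof is correct and follows essentially the same route as the argument the paper defers to, namely \cite[Lemma 79.2]{Fu}: transfinite induction with the proper representative doing the work at limit ordinals for the ``only if'' direction, and lifting a coset of height $\sigma$ to an element of $\p^{\sigma}G$ for the ``if'' direction. The only slightly loose point is the case $h_{G/N}(x+N)=\infty$, but as you indicate it is handled by the same device, taking $\sigma$ beyond the lengths of $G$ and $G/N$ so that the chosen preimage has infinite height and is therefore automatically proper.
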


\begin{proof}
Exactly the same argument as in~\cite[Lemma 79.2]{Fu} applies.
\end{proof}

For further considerations, we will need a weaker version of nice submodules, where only ordinals smaller than some fixed $\lambda$ are taken into account. In view of the former lemma, we adopt the following terminology.

\begin{Def} \label{def:lambda_nice}
For an ordinal $\lambda$, a submodule $N$ of a torsion module $G$ is \emph{$\lambda$-nice} if for every $\sigma<\lambda$:
\[\p^{\sigma}(G/N)=(\p^{\sigma}G+N)/N;\]
thus $N$ is nice if and only if it is $\lambda$-nice for every ordinal $\lambda$.
\end{Def}

As yet, we have described the class of epimorphisms $f: G \to H$ \st $\p^\sigma f$ is also en epimorphism for all $\sigma$ or all $\sigma<\lambda$. This happens precisely when $\Ker f$ is nice, resp.\ $\lambda$-nice. Now we recall similar definitions describing when $\p^\sigma(-)$ commutes with kernels; see~\cite[\S XII.80]{Fu} for the case of $\p$-groups. Only the ``$\lambda$-bounded'' versions are given here; they are exactly what we need.

\begin{Def} \label{def:balanced}
Let $\lambda$ be an ordinal number and $G$ a torsion module over a discrete valuation domain. A submodule $N$ of $G$ is said to be \emph{$\lambda$-isotypic} if $\p^{\sigma}G\cap N= \p^{\sigma}N$, for every $\sigma<\lambda$.

A submodule $N$ of $G$ is called \emph{$\lambda$-balanced} if it is $\lambda$-isotypic and $\lambda$-nice, i.e.
\[\p^{\sigma}G\cap N= \p^{\sigma}N \quad {\rm and} \quad \p^{\sigma}(G/N)=(\p^{\sigma}G+N)/N\quad \forall \sigma<\lambda.\]

A short exact sequence $0\to A\to B\to C\to 0$ is said to be \emph{$\lambda$-balanced} if the image of $A$ is $\lambda$-balanced in $B$.
\end{Def}

Note that $0\to A\to B\to C\to 0$ is $\lambda$-balanced \iff $0 \to \p^\sigma A \to \p^\sigma B \to \p^\sigma C \to 0$ is exact for all $\sigma<\lambda$. The next lemma gives a very useful criterion for testing whether a submodule is balanced. The proof is almost identical with the one for~\cite[Proposition 80.2]{Fu}, but we present the argument here for convenience of the reader.

\begin{Lem} \label{lem:balanced}
Let $\lambda$ be a limit ordinal and $G$ be a torsion module over a discrete valuation domain. A submodule $N$ of $G$ is $\lambda$-balanced if and only if
\[ \dfrac{\p^{\sigma}G[\p]+N}N=\p^{\sigma}\left(\dfrac GN\right)[\p] \]
for every $\sigma<\lambda$.
\end{Lem}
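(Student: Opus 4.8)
The plan is to prove the equivalence by showing each direction separately, using the height-function characterization of proper elements (the unnamed lemma) together with Lemma~\ref{lem:nice_char}. The forward direction is immediate: if $N$ is $\lambda$-balanced, then for each $\sigma<\lambda$ the sequence $0\to \p^\sigma N\to \p^\sigma G\to \p^\sigma(G/N)\to 0$ is exact, hence so is its restriction to $\p$-torsion elements, which says exactly that $\p^\sigma(G/N)[\p]=(\p^\sigma G[\p]+N)/(N\cap\ldots)$; one must only be careful to identify $(\p^\sigma G+N)/N$ with $\p^\sigma(G/N)$ via $\lambda$-niceness and then intersect with the socle. So the substance is the converse.

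For the converse, assume the socle condition $\bigl(\p^\sigma G[\p]+N\bigr)/N=\p^\sigma(G/N)[\p]$ holds for all $\sigma<\lambda$, and I would prove $\lambda$-niceness and $\lambda$-isotypy simultaneously by transfinite induction on $\sigma<\lambda$, exploiting that $\lambda$ is a limit ordinal so that the inductive hypothesis at stage $\sigma$ gives the full statement ``$\p^\tau G\cap N=\p^\tau N$ and $\p^\tau(G/N)=(\p^\tau G+N)/N$ for all $\tau<\sigma$.'' The limit stages of the induction are handled by taking intersections, using that $\p^\sigma(-)=\bigcap_{\tau<\sigma}\p^\tau(-)$, though for $\lambda$-niceness at a limit stage one needs a small argument that an intersection of the images $(\p^\tau G+N)/N$ equals $(\p^\sigma G+N)/N$; this is where properness/niceness of $N$ genuinely enters, and it is essentially the content of~\cite[Proposition 80.2]{Fu}. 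The successor step $\sigma\to\sigma+1$ is the heart of the matter: given $x\in G$ with $x+N\in \p^{\sigma+1}(G/N)$, one must find $n\in N$ with $x+n\in\p^{\sigma+1}G$ (niceness) and, separately, given $y\in\p^{\sigma+1}G\cap N$ one must write $y\in\p^{\sigma+1}N$ (isotypy).

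The key technical device I expect to use is the reduction to the socle: to control an element of height $\geq\sigma+1$ modulo $N$, one peels off $\p$'s and pushes the obstruction down into $\p^\tau G[\p]$ for suitable $\tau\le\sigma$, where the hypothesis applies; one then lifts the resulting socle element back up. Concretely, for niceness one picks a representative $x$ of a coset proper with respect to $N$ (such exists if $N$ is nice, but here we only want $\lambda$-niceness, so one argues more carefully using the unnamed lemma: $x\notin\p^{\sigma+1}G+N$ would contradict $x+N\in\p^{\sigma+1}(G/N)$ once we know $\p^{\sigma+1}(G/N)=(\p^{\sigma+1}G+N)/N$, which is what we are trying to prove — so the argument must be set up so that the socle hypothesis at level $\sigma$ breaks the potential circularity). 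The cleanest route is: if $x+N$ has height exactly $\sigma+1$ in $G/N$ but $x$ has height $\le\sigma$ in $G$ after all adjustments by $N$, then $\p x + N$ is a socle-like obstruction that the hypothesis forces to come from $\p^{\cdot}G[\p]$, yielding the required correction.

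The main obstacle will be organizing the successor step so that the inductive hypotheses (isotypy and niceness below $\sigma$) are deployed in the right order and the socle hypothesis is invoked at a level where it is already available, avoiding circular reasoning between niceness and isotypy. Apart from that, the argument is a transcription of~\cite[Proposition 80.2]{Fu} with ordinals truncated at $\lambda$; since $\lambda$ is assumed limit, no special treatment of $\lambda$ itself is needed, and every step takes place strictly below $\lambda$, so the classical proof goes through essentially verbatim. I would therefore present the successor induction step in full and refer to~\cite{Fu} for the routine limit-stage bookkeeping.
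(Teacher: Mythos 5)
Your converse is mis-structured at the decisive point. You propose to prove $\lambda$-niceness by transfinite induction on $\sigma$, with ``limit stages handled by taking intersections'' plus a ``small argument'' deferred to \cite[Proposition 80.2]{Fu} as routine bookkeeping. But at a limit ordinal $\sigma$ the statement you need is exactly that $\bigcap_{\tau<\sigma}(\p^{\tau}G+N)=\bigl(\bigcap_{\tau<\sigma}\p^{\tau}G\bigr)+N$ modulo $N$, and intersections do not commute with adding $N$; this is precisely the nontrivial content of niceness, it does not follow from the inductive hypotheses (niceness and isotypy below $\sigma$), and it is not what the classical proof does. The argument you are deferring to is not a transfinite induction on $\sigma$ at all: one fixes $\sigma<\lambda$ and inducts on the natural number $n$ in the statement $\p^{\sigma}(G/N)[\p^{n}]\subseteq(\p^{\sigma}G+N)/N$. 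For $n=1$ this is the socle hypothesis at $\sigma$; for the step one multiplies by $\p$, which moves the element \emph{up} the filtration into $\p^{\sigma+1}(G/N)[\p^{n-1}]$, applies the inductive hypothesis there (legitimate because $\sigma+1<\lambda$, $\lambda$ being a limit), divides back, and corrects by an element of $\p^{\sigma}G[\p]$ using the socle hypothesis at $\sigma$; finally the union over $n$ exhausts $\p^{\sigma}(G/N)$ because $G/N$ is torsion. So the obstruction is pushed to levels $\geq\sigma$, not ``down into $\p^{\tau}G[\p]$ for $\tau\le\sigma$'' as in your sketch, and heights/proper elements play no role. Once niceness is established this way, your isotypy plan is fine: limit stages really are trivial, and at a successor one writes $g=\p x$ with $x\in\p^{\sigma}G$, notes $x+N\in\p^{\sigma}(G/N)[\p]$, corrects $x$ by an element of $\p^{\sigma}G[\p]$ via the hypothesis, and uses $\p^{\sigma}G\cap N=\p^{\sigma}N$ from the induction.

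The forward direction also has a slip: exactness of $0\to\p^{\sigma}N\to\p^{\sigma}G\to\p^{\sigma}(G/N)\to 0$ does not give exactness after restricting to $\p$-torsion, since taking socles is only left exact (e.g. $0\to\p\Z/\p^{2}\to\Z/\p^{2}\to\Z/\p\to 0$). The surjectivity onto $\p^{\sigma}(G/N)[\p]$ is the whole point, and it needs isotypy one level up: lift $x+N\in\p^{\sigma}(G/N)[\p]$ to $x\in\p^{\sigma}G$ by niceness, then $\p x\in N\cap\p^{\sigma+1}G=\p^{\sigma+1}N$, so $\p x=\p y$ with $y\in\p^{\sigma}N$ and $x-y\in\p^{\sigma}G[\p]$ represents the coset. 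Note that this uses $\sigma+1<\lambda$, so the hypothesis that $\lambda$ is a limit ordinal is needed already here, not just in the converse; your ``intersect with the socle'' shortcut does not produce this.
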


\begin{proof} Assume that $N$ is $\lambda$-balanced in $G$ and let $\sigma<\lambda$. Since it is always true that $(\p^{\sigma}G[\p]+N)/N\subseteq \p^{\sigma}(G/N)[\p]$,  it is enough to prove the other inclusion.
Let $x+N\in\p^{\sigma}(G/N)[\p]$. Since by assumption $\p^{\sigma}(G/N)=(\p^{\sigma}G+N)/N$, we may assume that  $x\in  \p^{\sigma}G$. Then $\p x\in N\cap \p^{\sigma+1}G= \p^{\sigma+1}N$, hence $\p x=\p y$ with $y\in\p^{\sigma}N$. Here we have used that $\sigma+1<\lambda$ because $\lambda$ is a limit ordinal. It follows that $x-y\in \p^{\sigma}G[\p]$ and $x+N=x-y+N\in  (\p^{\sigma}G[\p]+N)/N$.

Conversely, let $(\p^{\sigma}G[\p]+N)/N=\p^{\sigma}(G/N)[\p]$ for every $\sigma<\lambda$. We prove by induction on $n$ that $\p^{\sigma}(G/N)[\p^n]\subseteq (\p^{\sigma}G+N)/N$. For $n=1$ this is true by assumption. Let $n>1$ and let $x+N\in \p^{\sigma}(G/N)[\p^n]$; then $\p x+N\in  \p^{\sigma+1}(G/N)[\p^{n-1}]\subseteq (\p^{\sigma+1}G+N)/N$, by the inductive hypothesis and the fact that $\sigma+1<\lambda$. Thus, $\p x+N= y+N$ with $y\in  \p^{\sigma+1}G$, hence $y=\p z$ with $z\in  \p^{\sigma}G$. We have $x-z+N\in \p^{\sigma}(G/N)[\p]$, hence $x-z+N=x'+N$ with $x'\in \p^{\sigma}G[\p]$. Then $x+N=x'+z+N$ with $x'+z\in \p^{\sigma}G$, thus the conclusion. When taking the union over all $n \in \NN$, we get $\p^{\sigma}(G/N) \subseteq (\p^{\sigma}G+N)/N$, or in other words, $N$ is $\lambda$-nice in $G$.

It remains to show that $N$ is $\lambda$-isotypic in $G$. It is enough to prove that, if $\p^{\sigma}G\cap N=\p^{\sigma} N$, then $\p^{\sigma+1}G\cap N=\p^{\sigma+1} N$, for $\sigma<\lambda$. Let $ g\in \p^{\sigma+1}G\cap N$, then $g=\p x$ with $x\in \p^{\sigma}G$. Thus, $x+N \in \big((\p^{\sigma}G+N)/N\big)[\p]\subseteq \p^{\sigma}(G/N)[\p]$, so by assumption, $x+N\in(\p^{\sigma}G[\p]+N)/N$. Hence $x+N=x'+N$ with $x'\in \p^{\sigma}G[\p]$ and  $x-x'\in \p^{\sigma}G\cap N=\p^{\sigma}N$. So $g=\p (x-x')\in \p^{\sigma+1}N$.
\end{proof}

The crucial fact about $\lambda$-balanced sequences of torsion modules is that they can be detected by a certain set of covariant $\Hom$-functors. We will make this precise in Proposition~\ref{prop:balanced_sequence}, but first we must give a construction of modules representing these functors. We will do so by extending the construction of the so-called groups $P_\beta$ defined and studied by E.\ Walker~\cite{W}. An alternative (and different) family of modules detecting $\lambda$-balanced sequences are the so-called generalized Pr\"ufer modules described in~\cite[\S XII.81]{Fu}.

\begin{Constr}[Modules $P_\beta$] \label{constr:p_beta}
Assume we are given a discrete valuation domain $R$, a prime element $\p \in R$, and an ordinal $\beta$. Then $P_{\beta}$ is the module with generators labeled by the finite sequences \[\beta\beta_1\beta_2\dots\beta_n\] of ordinals $\beta_i$ where $\beta>\beta_1>\beta_2>\dots>\beta_n$, which are subject to the relations:
\[ \p \cdot \beta\beta_1\beta_2\dots\beta_n\beta_{n+1}=\beta\beta_1\beta_2\dots\beta_n \quad {\rm and} \quad \p \cdot \beta=0.\]
\end{Constr}

Let us now establish basic properties of the modules $P_\beta$. In order to do so, define for every $\alpha\leq \beta$ a submodule $X_{\alpha}$ of $P_{\beta}$ by
\[ X_{\alpha}=\left \langle \beta\beta_1\beta_2\dots\beta_n\alpha\mid \beta>\beta_1>\dots>\beta_n>\alpha\right\rangle. \]
and for every $\gamma<\alpha\leq \beta$ define
\[
S_{\gamma\alpha}=\{\beta\beta_1\beta_2\dots\beta_n\gamma\mid \beta_n\geq \alpha \}
\quad {\rm and} \quad \kappa_{\gamma\alpha}= \left|S_{\gamma\alpha}\right|.
\]
The following properties are then not difficult to prove.

\begin{Lem} \label{lem:prop_p_beta} \cite{W}
For every $\alpha\leq \beta$ we have
\begin{enumerate}
\item $\dfrac{P_{\beta}}{X_{\alpha}}\cong \bigoplus\limits_{\gamma<\alpha} P_{\gamma}^{(\kappa_{\gamma\alpha})}$. In particular, $\dfrac{P_{\beta}}{\langle \beta \rangle}=\bigoplus\limits_{\gamma<\beta}P_{\gamma}$.
\item $\p^{\alpha}P_{\beta}=X_{\alpha}$. In particular, $\p^\beta P_{\beta} = \langle \beta \rangle \cong R/(\p)$ and $\p^{\beta+1}P_{\beta}=0$, so $P_\beta$ is reduced and its length equals $\beta+1$.
\item If $\beta=\alpha+\gamma$ (the ordinal sum), then $\p^{\alpha}P_{\beta}=P_{\gamma}.$
\end{enumerate}
\end{Lem}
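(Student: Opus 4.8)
The plan is to analyze the combinatorial structure of the generators of $P_\beta$ and track how the submodules $X_\alpha$ and the relations interact. I would first fix $\alpha\leq\beta$ and observe that the generators of $P_\beta$ split into two disjoint families: those sequences $\beta\beta_1\dots\beta_n$ all of whose entries after the first are $\geq\alpha$ (equivalently $\beta_n\geq\alpha$, or the sequence is just $\beta$ — but note $\p\cdot\beta = 0$ so we should be slightly careful at the bottom), and those having at least one entry $<\alpha$. A sequence in the second family can be written uniquely as $\beta\beta_1\dots\beta_k\gamma\delta_1\dots\delta_m$ where $\gamma<\alpha\leq\beta_k$ (with $k\geq 0$) and $\gamma>\delta_1>\dots>\delta_m$; here the ``head'' $\beta\beta_1\dots\beta_k\gamma$ ranges over $S_{\gamma\alpha}$ and the ``tail'' $\gamma\delta_1\dots\delta_m$ is exactly a generator of $P_\gamma$. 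This bijection of generators is the source of item (1).

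For item (1), I would define a map $P_\beta\to\bigoplus_{\gamma<\alpha}P_\gamma^{(\kappa_{\gamma\alpha})}$ on generators by sending a sequence with some entry $<\alpha$ to the corresponding basis element of $P_\gamma$ in the copy indexed by its head in $S_{\gamma\alpha}$, and sending a sequence with all entries (after the first) $\geq\alpha$ to $0$; one checks this respects the defining relations $\p\cdot(\sigma\mu)=\sigma$ and $\p\cdot\beta=0$ — the only subtle point being that applying $\p$ to a sequence whose last entry is $<\alpha$ never changes which $\gamma$ or which head in $S_{\gamma\alpha}$ is attached, while applying $\p$ to a sequence all of whose entries are $\geq\alpha$ either lands in such a sequence again or hits $\beta$, staying in the kernel. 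Conversely, the kernel is generated by exactly the sequences all of whose entries (after the first) are $\geq\alpha$, which is by definition $X_\alpha$ once one notes $X_\alpha = \langle \beta\beta_1\dots\beta_n\mid \beta_n\geq\alpha\rangle$ and that the displayed generating set of $X_\alpha$ (ending precisely in $\alpha$) generates the same submodule since $\p\cdot(\beta\beta_1\dots\beta_n\alpha)=\beta\beta_1\dots\beta_n$ produces all longer tails above $\alpha$. The special case $\alpha=\beta$ gives $P_\beta/\langle\beta\rangle\cong\bigoplus_{\gamma<\beta}P_\gamma$ since then $\kappa_{\gamma\beta}=1$ for each $\gamma<\beta$ (the only head in $S_{\gamma\beta}$ is the length-one sequence $\beta\gamma$, or $\gamma$ itself when we read the head as ending in $\gamma$ with $k=0$).

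For item (2), I would show $\p^\alpha P_\beta = X_\alpha$ by transfinite induction on $\alpha$. For $\alpha=0$ both sides are $P_\beta$. For $\alpha$ a successor, say $\alpha=\rho+1$, the inductive hypothesis gives $\p^\rho P_\beta = X_\rho$, and I claim $\p X_\rho = X_{\rho+1}$: applying $\p$ to a generator $\beta\beta_1\dots\beta_n\rho$ of $X_\rho$ with $\beta_n>\rho$ yields $\beta\beta_1\dots\beta_n$, and among these the ones with $\beta_n\geq\rho+1$ are precisely the generators of $X_{\rho+1}$, while those with $\beta_n=\rho$ — wait, we need $\beta_n>\rho$ for the generator to be a valid sequence ending in $\rho$ with a strictly larger predecessor, so in fact after applying $\p$ we always get $\beta_n>\rho$, i.e.\ $\beta_n\geq\rho+1$; and applying $\p$ to the shortest generator $\beta\rho$ of $X_\rho$ gives $\beta\in X_{\rho+1}$ too. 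This gives $\p X_\rho\subseteq X_{\rho+1}$, and the reverse inclusion is clear since any generator of $X_{\rho+1}$ is $\p$ times the sequence obtained by appending $\rho$. For $\alpha$ a limit ordinal, one uses $\p^\alpha P_\beta = \bigcap_{\rho<\alpha}\p^\rho P_\beta = \bigcap_{\rho<\alpha}X_\rho$ and checks this equals $X_\alpha$: an element of $P_\beta$ lies in every $X_\rho$ for $\rho<\alpha$ iff (expressing it in terms of the free-ish generating set) every generator occurring in it ends in an entry $\geq\rho$ for all $\rho<\alpha$, i.e.\ ends in an entry $\geq\alpha$, i.e.\ lies in $X_\alpha$ — this step needs the observation from item (1) that the generators of $P_\beta$ behave like a basis modulo the relations, so that membership in $X_\rho$ can be read off coordinatewise. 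Then the ``in particular'' clauses follow: $\p^\beta P_\beta = X_\beta = \langle\beta\rangle$ which is killed by $\p$ and nonzero, hence $\cong R/(\p)$, and $\p^{\beta+1}P_\beta = \p\langle\beta\rangle = 0$, so $l(P_\beta)=\beta+1$ and $P_\beta$ is reduced.

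Finally, for item (3), if $\beta=\alpha+\gamma$ then by (2) we have $\p^\alpha P_\beta = X_\alpha$, and I would exhibit an isomorphism $X_\alpha\cong P_\gamma$ by the order-isomorphism of the interval $[\alpha,\beta)$ (in fact $[\alpha,\beta]$, sending $\alpha+\delta\mapsto\delta$) which carries a generator $\beta\beta_1\dots\beta_n\alpha$ of $X_\alpha$ — here $\beta=\alpha+\gamma$, and each $\beta_i$ lies in $[\alpha,\beta]$ so equals $\alpha+\delta_i$ with $\gamma>\delta_1>\dots>\delta_n>0$ — to the generator $\gamma\delta_1\dots\delta_n 0$ of $P_\gamma$, and $\beta=\alpha+\gamma\mapsto\gamma$; one checks the defining relations correspond. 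I expect the main obstacle to be making rigorous the repeated use of ``the generators of $P_\beta$ form a basis modulo the relations'' — i.e.\ that $P_\beta$ has an explicit normal form so that the various submodules and quotients can be computed on generators. This is essentially the content of organizing $P_\beta$ as an iterated extension (or a direct limit of the bounded submodules $X_\alpha/X_{\alpha'}$-type pieces), and once that bookkeeping is set up, (1)--(3) are bijections of index sets; this is why the statement says these properties are ``not difficult to prove'' and refers to Walker~\cite{W}.
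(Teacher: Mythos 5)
The paper itself offers no proof of this lemma (it declares the properties ``not difficult to prove'' and defers to Walker~\cite{W}), so your proposal has to be judged against what a complete argument requires. Your overall architecture is the right one: the head--tail decomposition of generators for (1), transfinite induction on $\alpha$ for (2), and the order-isomorphism re-indexing $[\alpha,\beta]\to[0,\gamma]$ for (3), and you handle the relation-checking for the forward maps correctly (including the boundary case where applying $\p$ crosses the level $\alpha$). However, two places where you lean on ``the generators behave like a basis modulo the relations'' are genuine gaps, and the coordinatewise-membership claim is actually false as stated: expressions of elements in terms of generators are far from unique. For instance $\beta=\p\cdot(\beta\rho)$ lies in every $X_\sigma$ although this expression involves a generator ending in $\rho$, and already in $P_1$ one has $1\cdot 1+1\cdot(10)=(1+\p)\cdot(10)$, so membership in $X_\rho$ cannot be read off from the coefficients of an arbitrary representation. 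Consequently your limit-ordinal step in (2) is not justified as written. The standard repair uses part (1) together with induction on the ordinal: for $\alpha$ a limit, pass to $P_\beta/X_\alpha\cong\bigoplus_{\gamma<\alpha}P_\gamma^{(\kappa_{\gamma\alpha})}$, note that the image of $X_\rho$ lands in $\bigoplus_{\gamma}\p^{\rho}P_\gamma^{(\kappa_{\gamma\alpha})}$ (statement (2) applied to the smaller modules $P_\gamma$), and use $\p^{\gamma+1}P_\gamma=0$ for $\gamma<\alpha$ to conclude that the image of $\bigcap_{\rho<\alpha}X_\rho$ is zero, i.e.\ $\bigcap_{\rho<\alpha}X_\rho\subseteq X_\alpha$; the reverse inclusion and the successor step are as you say.

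The second gap is injectivity. In (1) you assert that the kernel of your map is exactly $X_\alpha$; the submodule generated by the generators that die is $X_\alpha$, but equality of the kernel with it needs an argument. It is obtained with no normal form at all by constructing the inverse: send the generator $\gamma\delta_1\dots\delta_m$ of the copy of $P_\gamma$ indexed by a head $h\in S_{\gamma\alpha}$ to $h\delta_1\dots\delta_m+X_\alpha$; the relation $\p\cdot\gamma=0$ is respected because $\p h\in X_\alpha$, and the two composites are the identity on generators. In (3) the issue is more serious: ``one checks the defining relations correspond'' only produces a well-defined surjection $\psi\colon P_\gamma\to X_\alpha$ (you cannot define a homomorphism \emph{out of} $X_\alpha$ by prescribing values on its generators, since you have no presentation of this submodule), and injectivity of $\psi$ is exactly the point that needs proof. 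One way to close it in the spirit of the paper: extend the partial assignment sending each sequence of $P_\beta$ with all entries $\geq\alpha$ to the corresponding generator of $P_\gamma$ to a homomorphism $P_\beta\to D$, where $D$ is a divisible module containing $P_\gamma$, by choosing $\p$-th roots inductively on the remaining generators exactly as in Lemma~\ref{lem:morphism}; the composite of $\psi$ with this map is the inclusion $P_\gamma\hookrightarrow D$, so $\psi$ is injective. Alternatively one invokes Walker's canonical-form lemma, but that is precisely the bookkeeping you postponed rather than supplied.
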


Let us state a crucial observation, whose counterpart for generalized Pr\"ufer groups has been given by Nunke, \cite[Lemma 81.7]{Fu}.

\begin{Lem}\label{lem:morphism} \cite{W}
Let $G$ be a torsion module over a discrete valuation domain and let $g\in \p^{\beta}G[\p]$. Then there is a morphism $f\colon P_{\beta}\to G$ such that $f(\beta)=g$.
\end{Lem}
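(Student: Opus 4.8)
The plan is to construct $f$ directly on the generators of $P_\beta$ by recursion on the length of the defining sequences, at each stage choosing a preimage of the appropriate height, and then to observe that the defining relations of $P_\beta$ hold automatically. Recall that $P_\beta$ is presented as the module on the generators $\beta\beta_1\cdots\beta_n$ subject only to the relations $\p\cdot\beta\beta_1\cdots\beta_n\beta_{n+1}=\beta\beta_1\cdots\beta_n$ and $\p\cdot\beta=0$; hence a homomorphism $P_\beta\to G$ is precisely an assignment of an element of $G$ to each generator which respects these two types of relations.

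First I would set $f(\beta):=g$, which is compatible with the relation $\p\cdot\beta=0$ exactly because $g\in G[\p]$. Then I would define $f$ on the remaining generators by induction on the length $n$ of the sequence, maintaining the invariant $f(\beta\beta_1\cdots\beta_n)\in\p^{\beta_n}G$ (the case $n=0$ being $f(\beta)\in\p^\beta G$, which holds by hypothesis). For the inductive step, suppose $f(s')$ has been chosen for $s'=\beta\beta_1\cdots\beta_n$ with $f(s')\in\p^{\beta_n}G$, and let $\beta_{n+1}<\beta_n$. Since $\beta_{n+1}+1\le\beta_n$ and the submodules $\p^\sigma G$ are non-increasing in $\sigma$, we get $f(s')\in\p^{\beta_n}G\subseteq\p^{\beta_{n+1}+1}G=\p(\p^{\beta_{n+1}}G)$, so we may pick some $f(\beta\beta_1\cdots\beta_n\beta_{n+1})\in\p^{\beta_{n+1}}G$ with $\p\cdot f(\beta\beta_1\cdots\beta_n\beta_{n+1})=f(s')$. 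This keeps the invariant in force and, by construction, realizes the relation $\p\cdot\beta\beta_1\cdots\beta_n\beta_{n+1}=\beta\beta_1\cdots\beta_n$.

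Since every defining relation of $P_\beta$ is of one of the two listed forms and has been arranged to hold, the assignment extends uniquely to a homomorphism $f\colon P_\beta\to G$ with $f(\beta)=g$, as required. I do not expect a genuine obstacle here; the only points deserving a word are that the recursion on sequence length terminates (each generating sequence is finite and strictly decreasing) and that $\p^\sigma G\subseteq\p^\tau G$ whenever $\tau\le\sigma$, which is a routine transfinite induction from the definition of $\p^\sigma G$. A brief alternative would be an induction on $\beta$ exploiting Lemma~\ref{lem:prop_p_beta}, but the direct construction above is shorter and more transparent.
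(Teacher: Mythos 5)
Your proof is correct and follows essentially the same route as the paper's: set $f(\beta)=g$ and define $f$ on generators by induction on the length of the sequence, using $\beta_{n+1}+1\le\beta_n$ to choose a preimage in $\p^{\beta_{n+1}}G$ at each step. The only difference is that you spell out the verification of the invariant and the freeness of the presentation slightly more explicitly, which the paper leaves implicit.
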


\begin{proof} It is enough to define $f$ on the generators $\beta\beta_1\beta_2\dots\beta_n$ of $P_{\beta}$ so that $f$ satisfies the relations. Let $f(\beta)=g$ and define $f(\beta\beta_1\beta_2\dots\beta_n)$ by induction on $n$. Suppose we have already defined $f(\beta\beta_1\beta_2\dots\beta_n)\in \p^{\beta_n}G$ and  consider $\beta\beta_1\beta_2\dots\beta_n\beta_{n+1}$; since $\beta_{n+1}+1\leq \beta_n$ there is $x\in \p^{\beta_{n+1}}G$ such that $f(\beta\beta_1\beta_2\dots\beta_n)=\p x$. Put $f(\beta\beta_1\beta_2\dots\beta_n\beta_{n+1})=x$. Then $f$ satisfies the relations including the relation $\p\beta=0$.
\end{proof}

Now we are able to state and prove the main result of the section. The argument here is taken from~\cite{W}, while a version for generalized Pr\"ufer groups also exists, see~\cite[Exercise 81.12]{Fu}.

\begin{Prop} \label{prop:balanced_sequence} \cite{W}
Let $\lambda$ be a limit ordinal. An exact sequence $0\to A\to B\overset{\pi}\to C\to 0$ of torsion modules over a discrete valuation domain is $\lambda$-balanced if and only if for every $\beta<\lambda$ every morphism $f\colon P_{\beta}\to C$ can be lifted to $B$. Equivalently, the sequence is $\lambda$-balanced \iff
\[ 0\to \Hom_R(P_\beta, A)\to \Hom_R(P_\beta, B)\to \Hom_R(P_\beta, C)\to 0 \]
is exact for each $\beta<\lambda$.
\end{Prop}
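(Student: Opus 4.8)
The plan is to prove both implications directly, using the structural properties of the $P_\beta$ collected in Lemma~\ref{lem:prop_p_beta} together with the lifting criterion of Lemma~\ref{lem:morphism} and the balancedness criterion of Lemma~\ref{lem:balanced}. The equivalence of the two formulations of the conclusion is automatic: since $\Hom_R(P_\beta,-)$ is left exact and $A\to B$ is mono, exactness of the three-term sequence means precisely that $\Hom_R(P_\beta,B)\to\Hom_R(P_\beta,C)$ is onto, i.e.\ every $f\colon P_\beta\to C$ lifts through $\pi$.

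First I would prove the ``only if'' direction. Assume $0\to A\to B\xrightarrow{\pi} C\to 0$ is $\lambda$-balanced, fix $\beta<\lambda$ and a morphism $f\colon P_\beta\to C$. The idea is to build the lift $\tilde f\colon P_\beta\to B$ by defining it on the generators $\beta\beta_1\dots\beta_n$ by induction on the length $n$. For the base case $n=0$ we have the single generator $\beta$, and by Lemma~\ref{lem:prop_p_beta}(2), $f(\beta)\in\p^\beta P_\beta[\p]\cdot$-image, so $f(\beta)\in\p^\beta C[\p]$; since the sequence is $\lambda$-balanced and $\beta<\lambda$, we have $\p^\beta B\to\p^\beta C$ surjective (and compatibly with the $\p$-torsion, using $\beta+1<\lambda$ as $\lambda$ is a limit ordinal), so we may choose $\tilde f(\beta)\in\p^\beta B[\p]$ with $\pi\tilde f(\beta)=f(\beta)$. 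For the inductive step, suppose $\tilde f(\beta\beta_1\dots\beta_n)\in\p^{\beta_n}B$ has been chosen lifting $f(\beta\beta_1\dots\beta_n)$; given a child generator $\beta\beta_1\dots\beta_n\beta_{n+1}$ with $\beta_{n+1}+1\le\beta_n$, the element $f(\beta\beta_1\dots\beta_n\beta_{n+1})\in\p^{\beta_{n+1}}C$ satisfies $\p\cdot f(\beta\beta_1\dots\beta_n\beta_{n+1})=f(\beta\beta_1\dots\beta_n)=\pi\tilde f(\beta\beta_1\dots\beta_n)$. I then need to lift this: choose any $u\in\p^{\beta_{n+1}}B$ with $\pi u=f(\beta\beta_1\dots\beta_n\beta_{n+1})$ — possible since $\p^{\beta_{n+1}}B\to\p^{\beta_{n+1}}C$ is onto — and correct it. Indeed $\pi(\tilde f(\beta\beta_1\dots\beta_n)-\p u)=0$, so $\tilde f(\beta\beta_1\dots\beta_n)-\p u\in A\cap\p^{\beta_n}B=\p^{\beta_n}A$ by $\lambda$-isotypicity; write it as $\p a$ with $a\in\p^{\beta_{n+1}}A$ (again using $\beta_{n+1}+1\le\beta_n<\lambda$ and $\p^{\beta_{n+1}}A\to\p^{\beta_n}A$ epi via the definition of $\p^{\sigma}$), and set $\tilde f(\beta\beta_1\dots\beta_n\beta_{n+1})=u+a$. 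Then $\p(u+a)=\tilde f(\beta\beta_1\dots\beta_n)$ and $\pi(u+a)=f(\beta\beta_1\dots\beta_n\beta_{n+1})$, so all relations of $P_\beta$ are respected and $\tilde f$ is a well-defined lift.

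For the ``if'' direction I would use Lemma~\ref{lem:balanced}: it suffices to show $(\p^{\sigma}B[\p]+A)/A=\p^{\sigma}(B/A)[\p]=\p^{\sigma}C[\p]$ for every $\sigma<\lambda$ (identifying $C\cong B/A$). One inclusion always holds; for the other, take $c\in\p^{\sigma}C[\p]$. By Lemma~\ref{lem:morphism} there is a morphism $f\colon P_{\sigma}\to C$ with $f(\sigma)=c$, and since $\sigma<\lambda$, by hypothesis $f$ lifts to $\tilde f\colon P_{\sigma}\to B$. Now $\tilde f(\sigma)\in\p^{\sigma}P_{\sigma}[\p]$-image, so by Lemma~\ref{lem:prop_p_beta}(2) (which gives $\p^{\sigma}P_{\sigma}=X_{\sigma}=\langle\sigma\rangle$ and $\p\cdot\sigma=0$) we get $\tilde f(\sigma)\in\p^{\sigma}B[\p]$, while $\pi\tilde f(\sigma)=c$; hence $c\in(\p^{\sigma}B[\p]+A)/A$, as required. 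Thus $A$ is $\lambda$-balanced in $B$.

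The main obstacle is the bookkeeping in the inductive construction of the lift in the first direction — specifically making sure at each stage that the chosen lift lands in the correct height submodule $\p^{\beta_i}B$ and that the relation $\p\cdot(\text{child})=(\text{parent})$ is preserved, which forces the two-step ``lift then correct by an element of $A$'' argument and repeatedly invokes both $\lambda$-niceness (to lift into $\p^{\beta_{n+1}}B$) and $\lambda$-isotypicity (to locate the correction term inside $\p^{\beta_n}A$, then pull it back along multiplication by $\p$). The hypothesis that $\lambda$ is a limit ordinal is used precisely to guarantee that all the successor ordinals $\beta_i+1$ appearing are still $<\lambda$, so that the balancedness hypothesis applies at every step. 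Everything else is a routine verification that the resulting assignment respects the presentation of $P_\beta$.
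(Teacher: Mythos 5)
Your proof is correct in substance, and the two directions compare differently with the paper's argument. Your (lifting $\Rightarrow$ $\lambda$-balanced) direction is essentially the paper's: take $c\in\p^{\sigma}(B/A)[\p]$, produce $f\colon P_{\sigma}\to B/A$ with $f(\sigma)=c$ via Lemma~\ref{lem:morphism}, lift it, and conclude by Lemma~\ref{lem:balanced}. For ($\lambda$-balanced $\Rightarrow$ lifting) you take a genuinely different route: you build the lift by recursion over the generating tree of $P_{\beta}$ (induction on word length), using $\lambda$-niceness to lift each generator's image into $\p^{\beta_{n+1}}B$ and $\lambda$-isotypicity to correct by an element of $A$ so that $\p\cdot(\text{child})=(\text{parent})$ holds exactly; this is a ``relative'' version of the proof of Lemma~\ref{lem:morphism} and never uses the decomposition of $P_{\beta}/\langle\beta\rangle$. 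The paper instead inducts on the ordinal $\beta$: it uses Lemmas~\ref{lem:balanced} and~\ref{lem:morphism} to produce $g\colon P_{\beta}\to B$ agreeing with $f$ at the top generator, observes that $f-\pi g$ factors through $P_{\beta}/\langle\beta\rangle\cong\bigoplus_{\gamma<\beta}P_{\gamma}$ (Lemma~\ref{lem:prop_p_beta}(1)), and lifts that factor summandwise by the inductive hypothesis. Your version is more elementary and self-contained; the paper's is shorter because it recycles Lemma~\ref{lem:prop_p_beta}(1) and pushes all the height bookkeeping into the socle criterion of Lemma~\ref{lem:balanced}.

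One local slip in your inductive step needs fixing, though it does not damage the structure: since $\p u$ is only known to lie in $\p^{\beta_{n+1}+1}B$, which contains (and in general strictly contains) $\p^{\beta_n}B$ when $\beta_{n+1}+1<\beta_n$, the difference $\tilde f(\beta\beta_1\dots\beta_n)-\p u$ lies a priori only in $A\cap\p^{\beta_{n+1}+1}B$, not in $A\cap\p^{\beta_n}B$ as you claim; likewise the phrase about ``$\p^{\beta_{n+1}}A\to\p^{\beta_n}A$ epi'' is not the right statement. The repair is immediate: isotypicity at $\beta_{n+1}+1<\lambda$ gives $A\cap\p^{\beta_{n+1}+1}B=\p^{\beta_{n+1}+1}A=\p(\p^{\beta_{n+1}}A)$, so the difference equals $\p a$ with $a\in\p^{\beta_{n+1}}A$, which is exactly what the remainder of your argument uses.
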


\begin{proof} Assume that the lifting property holds and let $b+A\in \p^{\sigma}(B/A)[\p]$, for $\sigma<\lambda$. Let $f\colon P_{\sigma}\to B/A$ be a morphism defined as in Lemma~\ref{lem:morphism}, so that $f(\sigma)=b+A$. By assumption $f$ lifts to $f'\colon P_{\sigma}\to B$. Then $f'({\sigma})\in \p^{\sigma}B[\p]$ and $f'(\sigma)+A=b+A$. Thus,  $(\p^{\sigma}B[\p]+A)/A=\p^{\sigma}(B/A)[\p]$ and by Lemma~\ref{lem:balanced}, the sequence is $\lambda$-balanced.

Conversely, assume that the sequence is $\lambda$-balanced and let $f\colon P_{\beta}\to B/A$ be a morphism for $\beta<\lambda$ . We define a lifting $f'\colon  P_{\beta}\to B$ by induction on $\beta$. If $\beta=0$, then $P_{\beta} \cong R/(\p)$ and $f(\beta)=b+A\in (B/A)[\p]= (B[\p]+A)/A$, by Lemma~\ref{lem:balanced}. Thus, there is $b'\in B[\p]$ \st $b+A = b'+A$ and $f'(\beta)= b'$ gives a lifting of $f$. Assume next that we can lift any morphism $\mu: P_\alpha \to B/A$ with $\alpha<\beta$ and let $f(\beta)=b+A$. Then $b+A\in \p^{\beta}(B/A)[\p]= (\p^{\beta}B[\p]+A)/A$, again by Lemma~\ref{lem:balanced}, so we may assume that $b\in \p^{\beta}B[\p]$. By Lemma~\ref{lem:morphism} there is a morphism $g\colon P_{\beta}\to B$ such that $g(\beta)=b$. Then $\beta \in \Ker (f-\pi g)$. Denoting by $\xi$ the projection $P_{\beta}\to P_{\beta}/\langle \beta\rangle$, we have $f-\pi g=\mu\xi$ for a morphism $\mu\colon P_{\beta}/\langle \beta\rangle\to B/A$. Since $P_{\beta}/\langle \beta\rangle\cong \bigoplus_{\gamma<\beta}P_{\gamma}$ by Lemma~\ref{lem:prop_p_beta}(1), $\mu$ can be lifted to $\mu'\colon P_{\beta}/\langle \beta\rangle\to B$ using the inductive hypothesis. Let now
\[ f'=g+\mu'\xi. \]
Then $\pi f' = \pi g+\pi\mu'\xi = \pi g+\mu\xi = \pi g+(f-\pi g) = f$.
\end{proof}

%
%
%

\begin{Rem} \label{rem:history_p-groups}
Let us give a few historical comments and mention the context of the results in this section. They originate in attempts to classify infinite abelian $\p$-groups, i.e.\ for $R = \hat\Z_\p$, but work for any discrete valuation domain $R$. Countable $\p$-groups are classified by Ulm's theorem~\cite[\S XII.77]{Fu}, while for general $\p$-groups the task seems to be rather hopeless. 

There is, however, a large class of $\p$-groups which can be classified. Namely, a $\p$-group is called \emph{balanced projective} if it is projective with respect to every balanced (= $\lambda$-balanced for all $\lambda$) short exact sequence. Then the following hold:
\begin{enumerate}
\item[(1)] By Proposition~\ref{prop:balanced_sequence}, every group in $\Add\{P_{\beta}\mid \beta\ {\rm ordinal}\}$ is balanced projective (for a class $\C$ of modules $\Add \C$ denotes the class consisting of all the direct summands of direct sums of modules in $\C$).
\item[(2)] It can be shown that divisible $\p$-groups are balanced projective.
\end{enumerate}
Balanced projective groups of type (1) are called \emph{totally projective} (see~\cite[\S XII.82]{Fu}). The reason for this terminology comes from the original definition given by Nunke ~\cite{Nun, Nun2} in terms of homological properties. Totally projective groups admit an equivalent definition due to Paul Hill~\cite{Hill2, Hill1}; they are exactly the $p$-groups possessing a system $\N$ of nice subgroups satisfying:
\begin{itemize}
\item $0\in \N$ and $\sum N_i\in \N$ for every family $\{N_i\}$ of groups in $\N$;
\item if $N\in \N$ and $N\leq H\leq G$ with $H/N$ countable, then there is $N'\in \N$ such that $H\leq N'$ and $N'/N$ is countable.
\end{itemize}
As a consequence noted by Hill, totally projective groups are classified by their Ulm-Kaplansky invariants. Note that every countable reduced $\p$-group is totally projective. The following results also hold, see~\cite{W}:
\begin{enumerate}
\item[(3)] A reduced $\p$-group $G$ is totally projective \iff it belongs to $\Add\{P_{\beta} \mid \beta<l(G)\}$.
\item[(4)] Combining (1), (2) and (3) we have that a $\p$-group $G$ is balanced projective \iff $G=D\oplus T$ with $D$ divisible and $T$ totally projective.
\end{enumerate}
We recall another characterization of totally projective groups due to Crawley and Hales~\cite{CH1, CH2}.
\begin{enumerate}
\item[(5)] A reduced $\p$-group $G$ is totally projective \iff it is  \emph{simply presented}, that is $G$ can be generated by a set of elements $\{x_i\}_{i\in I}$ subject only to relations of the form:
\[\p^mx_i=0\qquad {\rm or}\quad \p^nx_i=x_j, \quad i\neq j, n,m\in \NN.\]
\end{enumerate}
\end{Rem}

\section{An application of the $\p^{\lambda}$-adic topology}
\label{sec:adic_topology}

Also in this section we assume that $R$ is a discrete valuation domain and $\p \in R$ is a prime. Moreover, we fix a limit ordinal $\lambda$. Our aim is to extend to our setting the concept of the $\p^{\lambda}$-adic topology on an abelian group, which has been investigated by R.\ Mines~\cite{Mi}. As a consequence, we will show that for any uncountable regular cardinal $\lambda$, the $\lambda$-pure global dimension of $\Mod R$ is at least two.

The $\p^{\lambda}$-adic topology on an $R$-module $G$ is a linear topology with basis of neighborhoods of zero taken as the family of submodules $\{\p^{\sigma}G\ \mid \sigma<\lambda\}$. If $G$ is a torsion $R$-module, the canonical morphism
\[ \delta_{\lambda}\colon G\la \prod_{\sigma<\lambda}G/\p^{\sigma}G, \]
has kernel $\p^{\lambda}G$ and its image is contained in the torsion submodule $T_{\lambda}(G)$ of the completion $L_{\lambda}(G)$, where $L_{\lambda}(G)=\varprojlim_{\sigma<\lambda}G/\p^{\sigma}G$, viewed canonically as a submodule of $\prod_{\sigma<\lambda}G/\p^{\sigma}G$.

Let us now assume that $G$ is also reduced, that is, it contains no non-zero divisible submodule. 
Note that then we have:
\begin{itemize}
\item $G$ is discrete in the $\p^{\lambda}$-adic topology \iff $l(G)<\lambda$.
\item $G$ is a Hausdorff topological space \iff $l(G)\le\lambda$.
\end{itemize}

Since our main aim is to study torsion $R$-modules, which are analogues of $\p$-groups in our generalized setting, we would like to know the relation of $T_{\lambda}(G)$ and $L_{\lambda}(G)$. The following result says that often the situation is as favorable as it can be, since the completion of a torsion module is torsion again.

\begin{Prop} \label{prop:not-cofinal} \cite{Mi}
If $\lambda$ is an ordinal of uncountable cofinality, then for every torsion module $G$ we have $T_{\lambda}(G)=L_{\lambda}(G).$
\end{Prop}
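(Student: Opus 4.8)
The plan is to show that the completion $L_\lambda(G)=\varprojlim_{\sigma<\lambda}G/\p^\sigma G$ is already torsion, i.e.\ that every element of $L_\lambda(G)$ is annihilated by some power of $\p$; then $T_\lambda(G)=L_\lambda(G)$ is immediate. An element of $L_\lambda(G)$ is a compatible family $(g_\sigma+\p^\sigma G)_{\sigma<\lambda}$, and its image in $G/\p^{\sigma+1}G$ is killed by $\p$ exactly when $\p g_\sigma\in\p^{\sigma+1}G$; but what we really want is a single $n$ with $\p^n g_\sigma\in\p^\sigma G$ for all $\sigma<\lambda$ simultaneously. First I would record the key observation that, because $G$ is torsion, for each fixed $\sigma$ the coset represented in $G/\p^\sigma G$ is killed by $\p^{n}$ for some $n=n(\sigma)\in\NN$ (take $n$ with $\p^n g_\sigma=0$ in $G$, noting compatibility forces $\p^n$ to kill the whole tail below $\sigma$). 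The whole point is then to bound the function $\sigma\mapsto n(\sigma)$.

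The core step uses the uncountable cofinality of $\lambda$. Consider the set $\{\sigma<\lambda\}$ partitioned according to the minimal $n$ such that $\p^n$ kills the $\sigma$-th component; this gives a countable partition $\lambda=\bigsqcup_{n\in\NN}A_n$ where $A_n=\{\sigma<\lambda\mid \p^n\text{ kills component }\sigma \text{ but }\p^{n-1}\text{ does not}\}$. Since $\lambda$ has uncountable cofinality, $\lambda$ cannot be written as a countable union of sets each bounded below $\lambda$; hence some $A_n$ is cofinal in $\lambda$. I would then argue that this forces $\p^n$ to kill the \emph{entire} element of $L_\lambda(G)$: for any $\tau<\lambda$ pick $\sigma\in A_n$ with $\sigma\ge\tau$; then $\p^n g_\sigma\in\p^\sigma G\subseteq\p^\tau G$, and by compatibility $g_\tau\equiv g_\sigma\pmod{\p^\tau G}$, so $\p^n g_\tau\in\p^\tau G$. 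As this holds for all $\tau<\lambda$, the element $\p^n(g_\sigma+\p^\sigma G)_\sigma$ is zero in $L_\lambda(G)$, so the original element lies in $L_\lambda(G)[\p^n]\subseteq T_\lambda(G)$. Combined with the trivial inclusion $T_\lambda(G)\subseteq L_\lambda(G)$, this proves the proposition.

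I expect the main obstacle to be the bookkeeping in the first paragraph: making precise, via the compatibility of the inverse system, that ``$\p^n$ kills the $\sigma$-th component'' is a monotone enough condition in $\sigma$ that the sets $A_n$ genuinely partition $\lambda$ and that cofinality of $A_n$ transfers to killing all lower components. One has to be slightly careful that $n(\sigma)$ as defined via ``$\p^{n}g_\sigma=0$ in $G$ for the chosen representative'' is representative-independent up to the relevant precision — the clean formulation is to set $n(\sigma)=\min\{n\mid \p^n g_\sigma\in\p^\sigma G\}$, which depends only on the coset $g_\sigma+\p^\sigma G$, hence only on the element of $L_\lambda(G)$, and then monotonicity $n(\tau)\le n(\sigma)$ for $\tau\le\sigma$ follows from $\p^\sigma G\subseteq\p^\tau G$ together with compatibility. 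Once this monotonicity is in place the cofinality argument runs smoothly, and nothing here requires $R$ beyond it being a discrete valuation domain with prime $\p$, so the proof is genuinely the same as Mines' argument for abelian $\p$-groups as the citation indicates.
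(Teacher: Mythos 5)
Your proof is correct and is essentially the paper's argument run in the direct rather than contradictory direction: the paper assumes $\xi$ is not torsion, chooses for each $k$ an ordinal $\sigma_k$ past which $\p^k$ fails to kill the components, and uses uncountable cofinality to get $\sup_k\sigma_k<\lambda$, contradicting torsionness of a single component, while your partition of $\lambda$ by the minimal annihilating exponent $n(\sigma)$ together with the cofinal class and the compatibility push-down encodes exactly the same combination of uncountable cofinality, torsionness of each component, and compatibility of the inverse system. No gap; nothing further is needed.
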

\begin{proof} Let $ \xi=(x_{\sigma}+\p^{\sigma}G)_{\sigma<\lambda} \in L_{\lambda}(G)$. If $\xi$ is not a torsion element, then for every $k\in \NN$ there is an ordinal $\sigma_k$ such that $\p^k(x_{\sigma}+\p^{\sigma}G)\neq 0$, for every $\sigma \ge \sigma_k$.
Let $\alpha = \sup \{\sigma_k\mid k\in \NN\}$; by assumption $\alpha<\lambda$. For every $\sigma \ge \alpha$ and $k \in \NN$ we have $\p^k(x_{\sigma}+\p^{\sigma}G)\neq 0$, which is a contradiction, since the elements $x_{\sigma}$ are torsion.
\end{proof}

The following proposition is crucial, it gives us, provided $\lambda$ has uncountable cofinality, a method to construct a plethora of non-trivial examples of $\p^\lambda$-adic complete modules. The idea for its proof uses classical arguments. We follow here the presentation given in \cite{S}.
 
\begin{Prop} \label{prop:completeness} \cite{S}
Let $\lambda$ be an ordinal of uncountable cofinality and for every $\alpha<\lambda$, let $G_{\alpha}$ be a reduced torsion $R$-module of length $\leq \alpha$. Then $\bigoplus_{\alpha<\lambda}G_{\alpha}$ is complete in the $\p^{\lambda}$-adic topology.
\end{Prop}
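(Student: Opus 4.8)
The plan is to show that every element $\xi = (x_\sigma + \p^\sigma G)_{\sigma<\lambda}$ of $L_\lambda\bigl(\bigoplus_{\alpha<\lambda} G_\alpha\bigr)$, where each $G_\alpha$ is reduced torsion of length $\leq\alpha$, already lies in the image of $\delta_\lambda$, i.e.\ is represented by a single element of $G = \bigoplus_{\alpha<\lambda} G_\alpha$. Write $\pi_\alpha\colon G \to G_\alpha$ for the canonical projections. Since $\lambda$ has uncountable cofinality, Proposition~\ref{prop:not-cofinal} already tells us $\xi$ is a torsion element, so $\p^n\xi = 0$ for some $n \in \NN$; we may thus work inside $G[\p^n]$ and assume each $x_\sigma$ is chosen in $G[\p^n]$ as well. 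The key point will be a \emph{uniform bound on supports}: I will show that, after adjusting the representatives $x_\sigma$ compatibly, there is a single ordinal $\mu<\lambda$ such that $\pi_\alpha(x_\sigma) = 0$ for all $\alpha \geq \mu$ and all $\sigma$. Once this is established, the $x_\sigma$ all live in the fixed submodule $H = \bigoplus_{\alpha<\mu} G_\alpha$, which has length $\leq \mu < \lambda$ and is therefore \emph{discrete} in its own $\p^\lambda$-adic topology; the coherence of $\xi$ then forces all but finitely many of the differences $x_\sigma - x_\tau$ to vanish, so the $x_\sigma$ stabilise to a single element $x \in H \subseteq G$ with $\delta_\lambda(x) = \xi$.

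To carry this out, first note that for each fixed $\alpha<\lambda$ the component sequence $\bigl(\pi_\alpha(x_\sigma) + \p^\sigma G_\alpha\bigr)_{\sigma<\lambda}$ is a coherent sequence in $L_\lambda(G_\alpha)$ (using that $\p^\sigma G$ maps into $\p^\sigma G_\alpha$ under $\pi_\alpha$). Since $G_\alpha$ has length $\leq\alpha$, for indices $\sigma \geq \alpha$ we have $\p^\sigma G_\alpha = \p^\alpha G_\alpha = 0$ ($G_\alpha$ reduced), so for $\sigma,\tau \geq \alpha$ the coherence gives $\pi_\alpha(x_\sigma) = \pi_\alpha(x_\tau)$: the $\alpha$-th component is eventually constant, with stable value I will call $g_\alpha \in G_\alpha$. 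The candidate element is then morally $x = (g_\alpha)_{\alpha<\lambda}$, but one must check this is a \emph{finitely supported} tuple — this is exactly the support bound mentioned above, and it is the main obstacle. The argument I expect to use is a cofinality/counting one in the spirit of Proposition~\ref{prop:not-cofinal}: suppose $g_\alpha \neq 0$ for cofinally many $\alpha<\lambda$. For each such $\alpha$, $g_\alpha = \pi_\alpha(x_\alpha)$, so $\pi_\alpha(x_\alpha) \neq 0$; but each individual $x_\sigma$, being an element of the direct sum $G$, has finite support, so for each $\sigma$ there is a largest ordinal in $\operatorname{supp}(x_\sigma)$. Tracking how these supports must grow to realise cofinally many nonzero $g_\alpha$, together with the compatibility relations among the $x_\sigma$ modulo $\p^\sigma G$ and the uncountable cofinality of $\lambda$, yields a contradiction — concretely, one produces a countable cofinal subset of $\lambda$, which is impossible.

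Finally, with $\mu<\lambda$ chosen so that $g_\alpha = 0$ for all $\alpha \geq \mu$, set $x = (g_\alpha)_{\alpha<\mu} \in H = \bigoplus_{\alpha<\mu} G_\alpha$. I claim $\delta_\lambda(x) = \xi$, i.e.\ $x - x_\sigma \in \p^\sigma G$ for every $\sigma<\lambda$. For $\sigma \geq \mu$ this can be read off componentwise: for $\alpha < \mu \leq \sigma$ we have $\pi_\alpha(x) = g_\alpha = \pi_\alpha(x_\sigma)$ (the stable value, reached by stage $\alpha < \sigma$), and for $\alpha \geq \mu$ both $\pi_\alpha(x)$ and $\pi_\alpha(x_\sigma) \bmod \p^\sigma G_\alpha$ vanish; hence $x - x_\sigma$ has all components in $\p^\sigma G_\alpha$, i.e.\ lies in $\p^\sigma G$. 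Since $\{\p^\sigma G \mid \sigma < \lambda\}$ is a decreasing chain, the relation $x - x_\sigma \in \p^\sigma G$ for all sufficiently large $\sigma$ forces it for all $\sigma$ by coherence of $\xi$ (for small $\sigma$, $x - x_\sigma \in \p^{\sigma'} G \subseteq \p^\sigma G$ using any large $\sigma' \geq \sigma$ together with $x_{\sigma'} - x_\sigma \in \p^\sigma G$). Therefore $\xi = \delta_\lambda(x)$ lies in the image of $\delta_\lambda$, which is precisely the statement that $\bigoplus_{\alpha<\lambda} G_\alpha$ is $\p^\lambda$-adically complete. The one delicate ingredient, and the step most deserving of care, is the support-bound argument in the previous paragraph; everything else is bookkeeping with the definitions of $\p^\sigma(-)$ and the direct-sum structure.
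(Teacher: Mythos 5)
Your argument is correct in outline but follows a genuinely different route from the paper's. The paper views $G=\bigoplus_{\alpha<\lambda}G_\alpha$ inside the product $L=\prod_{\alpha<\lambda}G_\alpha$, observes that the $\p^\lambda$-adic topology on $L$ is finer than the product of the discrete topologies while each $\p^\sigma L$ is closed in the product topology, deduces completeness of $L$ from Bourbaki's criterion, and then proves that $G$ is closed in $L$ by a finite-support/uncountable-cofinality argument. You instead work directly with the inverse limit $L_\lambda(G)=\varprojlim_{\sigma<\lambda}G/\p^\sigma G$ and prove surjectivity of $\delta_\lambda$: the hypothesis $l(G_\alpha)\le\alpha$ (plus reducedness) makes each coordinate $\pi_\alpha(x_\sigma)$ stabilize at a value $g_\alpha$, and uncountable cofinality together with finiteness of supports forces $(g_\alpha)$ to lie in the direct sum. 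This avoids the Bourbaki step entirely and is more self-contained; the combinatorial core is the same in both proofs, just used in mirror image (the paper bounds a countable bad set by some $\beta<\lambda$ and contradicts finiteness of the support of a single approximant, whereas you contradict uncountable cofinality itself).

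Three repairs are needed. (i) Drop the reduction to representatives $x_\sigma\in G[\p^n]$: the relation $\p^n x_\sigma\in\p^\sigma G$ does not imply that the coset $x_\sigma+\p^\sigma G$ has a representative killed by $\p^n$ (already for $G=R/\p^2R$, $\sigma=n=1$ and the coset $1+\p G$ it fails); fortunately you never use this reduction. (ii) You conflate bounded with finite support: to have $x=(g_\alpha)_{\alpha<\mu}$ in $\bigoplus_{\alpha<\mu}G_\alpha$ you need the set $S=\{\alpha<\lambda\mid g_\alpha\neq 0\}$ to be \emph{finite}, not merely contained in $[0,\mu)$. (iii) The step you flag as the main obstacle does close, and by exactly the mechanism you name, but it deserves to be written out: for $\alpha\le\sigma$ with $g_\alpha\neq 0$ one has $\pi_\alpha(x_\sigma)=g_\alpha\neq 0$, so $S\cap[0,\sigma]\subseteq\mathrm{supp}(x_\sigma)$ is finite for every $\sigma<\lambda$; hence if $S$ were infinite it would be unbounded in $\lambda$ and of order type $\omega$, i.e.\ a countable cofinal subset of $\lambda$, contradicting the hypothesis on cofinality. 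This yields both the finiteness in (ii) and your bound $\mu$ at once. With that in place, your verification that $\delta_\lambda(x)=\xi$ goes through, provided you justify that $\pi_\alpha(x_\sigma)\in\p^\sigma G_\alpha$ for $\alpha\notin S$ with $\alpha>\sigma$ (compare $x_\sigma$ with some $x_\tau$, $\tau\ge\alpha$, and use $g_\alpha=0$) and note the standard fact $\p^\sigma G=\bigoplus_\alpha\p^\sigma G_\alpha$.
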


\begin{proof}
Let $G=\bigoplus\limits_{\alpha<\lambda}G_{\alpha}$ and let $L=\prod\limits_{\alpha<\lambda}G_{\alpha}$. Let $\tau$ be the $\p^{\lambda}$-adic topology on $L$. Then $\tau$ is finer than the product topology of the discrete topologies on the $G_{\alpha}$. Indeed, if $U$ is a neighborhood of zero in the product topology, then $U$ contains the direct product of $G_{\alpha}$ for $\alpha\notin F$ with $F$ a finite subset of $\lambda$. Taking $\beta$ the maximum of $F$ we have that $\p^{\beta}G_{\alpha}=0$ for every $\alpha\leq \beta$, by the assumption $l(G_{\alpha})\leq \alpha$; hence $\p^{\beta}L\subseteq U$. Moreover, the submodules $\p^{\sigma}L$, for $\sigma<\lambda$, are closed in the product topology, since they are products of closed submodules $\p^{\sigma} G_\alpha \subseteq G_\alpha$.
By \cite[Corollaire 2, III.26, par. 3]{Bou}, $L$ is complete in the topology $\tau$.
 
Now for every $\sigma<\lambda$ we have $\p^{\sigma}L\cap G=\p^{\sigma}G$, so $\tau$ induces the  $\p^{\lambda}$-adic topology on $G$. To conclude that $G$ is complete in the $\p^{\lambda}$-adic topology it is enough to prove that $G$ is  $\tau$-closed in $L$. Assume by way of contradiction that $G$ is not closed in $L$. Then there exists an element $\xi=(x_{\alpha})_{\alpha<\lambda}\in L$ such that $\xi$ is in the closure of $G$ but not in $G$. This means that,  for every $\sigma<\lambda$ there is an element $y^{\sigma}\in G$ such that $\xi-y^{\sigma}\in \p^{\sigma}L$. Since $\xi\notin G$, there is an increasing sequence $\beta_1<\beta_2<\dots $ of ordinals such that, for every $n\in \NN$, all the $\beta_n$-components $x_{\beta_n}$ of $\xi$ are nonzero. Choose $\beta<\lambda$, $\beta> \beta_n$ for every $n$; then $\xi-y^{\beta}\in \p^{\beta}L $. Checking the $\beta_n$-components we have $x_{\beta_n}-y^{\beta}_{\beta_n}\in \p^{\beta}G_{\beta_n}=0$, since $l(G_{\beta_n})\leq \beta_n<\beta$. Thus, $y^{\beta}_{\beta_n}\neq 0$  for all $n$, which is a contradiction since $y^{\beta}\in G$ can have only finitely many nonzero components.
\end{proof}
To state further results, we need to recall a few facts about $\lambda$-pure submodules and exact sequences. For a regular cardinal $\lambda$, a submodule $A$ of an $R$-module $B$ is said to be \emph{$\lambda$-pure} in $B$ if for every $<{}\lambda$-presented $R$-module $Y$, every morphism $f\colon Y\to B/A$ can be lifted to $B$. A short exact sequence
\[ 0\la A\la B\la C\la 0 \]
is said to be \emph{$\lambda$-pure} if the image of $A$ is $\lambda$-pure in $B$. Thus the usual notion of purity is the $\aleph_0$-purity in our terminology.

A well-known and important fact is that $\Mod R$ together with the class of all $\lambda$-pure exact sequences forms an exact category in the sense of Quillen; we refer to~\cite[Appendix A]{Kel} and~\cite{Bu} for more details on the concept. A practical consequence is that we can define a relative version of Yoneda Ext-functors and do homological algebra. In fact, another well known fact is that we have enough projectives: A module is \emph{$\lambda$-pure projective} \iff it is a summand in a direct sum of a family of $<{}\lambda$-presented modules. As an immediate consequence of the previous proposition we then have:

\begin{Prop}\label{prop:direct_sums}
Let $\lambda$ be an uncountable regular cardinal. Then any reduced torsion $\lambda$-pure projective $R$-module is complete in the $\p^{\lambda}$-adic topology.
\end{Prop}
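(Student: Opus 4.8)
The plan is to deduce this from Proposition~\ref{prop:completeness}. Recall that a $\lambda$-pure projective module $M$ is a direct summand of some $F=\bigoplus_{i\in I}Y_i$ with each $Y_i$ being $<{}\lambda$-presented; write $F=M\oplus K$. Since $M$ is torsion, $M\subseteq tF=\bigoplus_i tY_i$, and in fact $tF=M\oplus tK$, so $M$ is a direct summand of $\bigoplus_i tY_i$. Over the noetherian ring $R$ a submodule of a $<{}\lambda$-generated module is $<{}\lambda$-generated, hence $<{}\lambda$-presented, so each $tY_i$ is torsion and $<{}\lambda$-presented. As $R$ is a Dedekind domain (so divisible $=$ injective), the maximal divisible submodule $d(tY_i)$ splits off, say $tY_i=d(tY_i)\oplus T_i$ with $T_i$ reduced torsion and $<{}\lambda$-presented. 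Now $\bigoplus_i d(tY_i)=d\bigl(\bigoplus_i tY_i\bigr)$ is a canonical summand, so splitting it off is compatible with the decomposition $\bigoplus_i tY_i=M\oplus tK$; as $M$ is reduced this exhibits $M$ as isomorphic to a direct summand of $F':=\bigoplus_{i\in I}T_i$.

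The one nontrivial input is that each $T_i$ has length $l(T_i)<\lambda$. This generalizes the classical fact for abelian $\p$-groups, where it follows from $l(G)\le|G|$ (the chain $\{\p^\sigma G\}_\sigma$ being strictly decreasing up to $l(G)$), and this is where the regularity of $\lambda$ enters. Granting it, put $G_\alpha=\bigoplus\{\,T_i\mid l(T_i)=\alpha\,\}$ for each $\alpha<\lambda$ (possibly $0$); this is a reduced torsion module of length $\le\alpha$, and $F'=\bigoplus_{\alpha<\lambda}G_\alpha$. By Proposition~\ref{prop:completeness}, $F'$ is complete in the $\p^\lambda$-adic topology. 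Moreover $F'$ is reduced with $l(F')\le\lambda$, hence $\p^\lambda F'=0$, i.e.\ $F'$ is Hausdorff.

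It remains to check that the direct summand $M$ of $F'$ is again complete. Write $F'=M\oplus N'$. A routine transfinite induction gives $\p^\sigma F'=\p^\sigma M\oplus\p^\sigma N'$ for all $\sigma$; in particular $\p^\sigma F'\cap M=\p^\sigma M$, so the $\p^\lambda$-adic topology on $M$ is exactly the one induced from $F'$, and $M$ is Hausdorff. Also $N'$ is reduced with $l(N')\le\lambda$, so $\p^\lambda N'=0$. Take a coherent family $(x_\sigma+\p^\sigma M)_{\sigma<\lambda}\in\varprojlim_\sigma M/\p^\sigma M$; via the injections $M/\p^\sigma M\hookrightarrow F'/\p^\sigma F'$ it determines a coherent family in $\varprojlim_\sigma F'/\p^\sigma F'$, which by completeness of $F'$ is represented by some $z=m+n$ with $m\in M$, $n\in N'$. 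From $z-x_\sigma\in\p^\sigma F'=\p^\sigma M\oplus\p^\sigma N'$ and $x_\sigma\in M$ we get $n\in\p^\sigma N'$ for every $\sigma<\lambda$, so $n\in\p^\lambda N'=0$ and $z=m\in M$ represents the given family. Hence $M\to\varprojlim_\sigma M/\p^\sigma M$ is onto, and being Hausdorff $M$ is complete. I expect the real obstacle to be the length estimate $l(T_i)<\lambda$ for $<{}\lambda$-presented reduced torsion modules over a discrete valuation domain: this genuinely uses that $\lambda$ is regular and is the only step beyond bookkeeping, the reductions to the reduced torsion situation and the closed-summand argument being straightforward once the setup is fixed.
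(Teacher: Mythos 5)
Your overall strategy is sound, but the one step you explicitly ``grant'' --- that every $<{}\lambda$-presented reduced torsion module $T_i$ has length $l(T_i)<\lambda$ --- is exactly where the real content lies, and the justification you gesture at does not transfer from $\hat\Z_\p$ to a general discrete valuation domain. The classical bound $l(G)\le|G|$ settles the case of abelian $\p$-groups because a $<{}\lambda$-generated torsion $\hat\Z_\p$-module has cardinality $<\lambda$; over an arbitrary $R$ the residue field $R/(\p)$ may itself have cardinality $\ge\lambda$, so even a cyclic torsion module can be too large for a count on \emph{elements}. The count has to be made on \emph{generators}: write $T_i$ as a directed union of fewer than $\lambda$ finitely generated submodules $T_i^j$ (each of finite length, being finitely generated torsion over a noetherian $R$); a strict drop $\p^\sigma T_i\supsetneqq\p^{\sigma+1}T_i$ forces a strict drop in $T_i^j\cap\p^\sigma T_i$ for some $j$, and each $T_i^j$ can witness only finitely many drops. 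Since the chain $(\p^\sigma T_i)_\sigma$ is strictly decreasing at every successor step below $l(T_i)$, regularity of $\lambda$ yields $l(T_i)<\lambda$. This is precisely how the paper argues, so the gap is fillable --- but as written your proof assumes its only nontrivial lemma.

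Everything else checks out, and your treatment of the direct summand genuinely differs from the paper's. The paper applies the Kaplansky--Walker theorem to the $\lambda$-pure projective module $G$ itself, obtaining $G\cong\bigoplus_i G_i$ with each $G_i$ $<{}\lambda$-generated, and then feeds this decomposition straight into Proposition~\ref{prop:completeness}; you instead apply Proposition~\ref{prop:completeness} to the ambient module $F'=\bigoplus_i T_i$ and verify by hand that a direct summand $M$ of a complete Hausdorff module is complete, using $\p^\sigma F'=\p^\sigma M\oplus\p^\sigma N'$ and $\p^\lambda N'\subseteq\p^\lambda F'=0$. Your route avoids importing Kaplansky--Walker at the cost of the (correct) closed-summand argument, and it forces the extra preliminary reductions of passing to $tF=\bigoplus_i tY_i$ and splitting off the divisible part --- both of which are fine, since $R$ is noetherian and divisible modules over a discrete valuation domain are injective. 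Once you supply the length estimate as above, the proof is complete.
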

\begin{proof}
By definition, a $\lambda$-pure projective module $G$ is a direct summand in a sum of $<{}\lambda$-presented, or here, since any discrete valuation domain is noetherian, equivalently $<\lambda$-generated modules. Using the Kaplansky-Walker theorem~\cite[26.1]{AF}, we infer that we have $G \cong \bigoplus_{i\in I}G_i$ with each $G_i$ $<{}\lambda$-generated.

Then every $G_i$ has length less than $\lambda$. Indeed, $G_i$ is a directed union of a family $(G_i^j \mid j \in J)$ of its finitely generated submodules \st $\left|J\right| < \lambda$. Since $G_i$ is a torsion module, so is each $G_i^j$. Thus, each $G_i^j$ is of finite length. Now one easily sees that given any ordinal $\sigma$, we have $\p^\sigma G_i \supsetneqq \p^{\sigma+1} G_i$ \iff $(G_i^j \cap \p^\sigma G_i) \supsetneqq (G_i^j \cap \p^{\sigma+1} G_i)$ for some $j \in J$. However, each $G_i^j$ can occur only for finitely many ordinals $\sigma$, so $l(G_i) < \lambda$ as claimed.

Finally, for every $\alpha<\lambda$ let $I_{\alpha}=\{i\in I\mid l(G_i)=\alpha\}$ and define $G_{\alpha}=\bigoplus_{i\in I_{\alpha}}G_i$.
Then $G \cong \bigoplus_{\alpha<\lambda}G_{\alpha}$ and we conclude by applying Proposition~\ref{prop:completeness}.
\end{proof}

Using the $\lambda$-pure exact structure on $\Mod R$, we can define \emph{$\lambda$-pure projective dimension} of a module and \emph{$\lambda$-pure global dimension} of $\Mod R$ in the obvious way. With the terminology, our goal in this section is to prove that if $\lambda$ is uncountable, the $\lambda$-pure projective dimension of $P_\lambda$ is at least $2$. We start with giving a suitable presentation of $P_\lambda$.

\begin{Prop} \label{prop:presentation}
Let $\lambda$ be an uncountable regular cardinal and $P_\lambda$ the module as in Construction~\ref{constr:p_beta}. Then there is a $\lambda$-balanced exact sequence
\[ 0\la K\la T\la P_{\lambda}\la 0 \]
such that $T=\bigoplus\limits_{\beta<\lambda} P_{\beta}^{(\Hom(P_{\beta}, P_{\lambda}))}$ and $K$ is not complete in the $\p^{\lambda}$-adic topology.
\end{Prop}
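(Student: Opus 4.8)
The plan is to build the canonical ``$\lambda$-balanced projective precover'' of $P_\lambda$ and to study its kernel. Put $T=\bigoplus_{\beta<\lambda}P_\beta^{(\Hom_R(P_\beta,P_\lambda))}$ and let $\epsilon\colon T\to P_\lambda$ be the evaluation map, which on the copy of $P_\beta$ indexed by $f\in\Hom_R(P_\beta,P_\lambda)$ is just $f$; set $K=\Ker\epsilon$. The first observation is that, as soon as $\epsilon$ is surjective, the sequence $0\to K\to T\to P_\lambda\to0$ is automatically $\lambda$-balanced. Indeed, for every $\beta<\lambda$ each morphism $f\colon P_\beta\to P_\lambda$ is, by construction, the composite of $\epsilon$ with the inclusion of the corresponding summand, so $\Hom_R(P_\beta,T)\to\Hom_R(P_\beta,P_\lambda)$ is onto for each $\beta<\lambda$, and Proposition~\ref{prop:balanced_sequence} gives $\lambda$-balancedness. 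So the only nontrivial point in producing the sequence is surjectivity of $\epsilon$.

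To check that $\epsilon$ is onto it suffices to hit every generator $x=\lambda\beta_1\cdots\beta_n$ of $P_\lambda$. I would construct a morphism $f\colon P_n\to P_\lambda$ with $x\in\Img f$ by essentially the recipe from the proof of Lemma~\ref{lem:morphism}: send the root $n$ of $P_n$ to $\lambda$, send the generators along the branch $n,\,n(n{-}1),\,n(n{-}1)(n{-}2),\,\dots,\,n(n{-}1)\cdots0$ to $\lambda,\,\lambda\beta_1,\,\lambda\beta_1\beta_2,\,\dots,\,\lambda\beta_1\cdots\beta_n=x$ respectively, and define $f$ on the remaining generators arbitrarily inside the appropriate $\p^\sigma P_\lambda$, exactly as in Lemma~\ref{lem:morphism}. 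The assignments on the branch respect the relations of $P_n$: compatibility with multiplication by $\p$ is immediate, and the height condition needed to continue the construction amounts to $\lambda\beta_1\cdots\beta_k\in\p^{\,n-k}P_\lambda$, i.e.\ $h_{P_\lambda}(\lambda\beta_1\cdots\beta_k)=\beta_k\geq n-k$, which holds because $\beta_1>\beta_2>\cdots>\beta_n\geq0$ forces $\beta_k\geq n-k$. (For $n=0$ this is just $P_0\cong R/(\p)\to P_\lambda$, $0\mapsto\lambda$.) Hence $\epsilon$ is surjective and the $\lambda$-balanced sequence exists.

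It remains to show that $K$ is not $\p^\lambda$-adically complete. First, $T$ is a direct sum of $<\lambda$-presented modules, hence $\lambda$-pure projective, and it is reduced torsion, so by Proposition~\ref{prop:direct_sums} it is complete in the $\p^\lambda$-adic topology; moreover $l(T)=\sup_{\beta<\lambda}l(P_\beta)\leq\lambda$ together with reducedness gives $\p^\lambda T=0$, so $T$ is Hausdorff. Since $K$ is $\lambda$-balanced in $T$, it is in particular $\lambda$-isotypic, i.e.\ $\p^\sigma K=\p^\sigma T\cap K$ for all $\sigma<\lambda$, so the $\p^\lambda$-adic topology on $K$ coincides with the subspace topology inherited from $T$; thus it is enough to prove that $K$ is not closed in $T$. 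Now I would compute its closure: because $K$ is $\lambda$-nice in $T$ one has $\epsilon(\p^\sigma T)=\p^\sigma P_\lambda$ for $\sigma<\lambda$, hence $K+\p^\sigma T=\epsilon^{-1}(\p^\sigma P_\lambda)$, and therefore
\[
\overline K=\bigcap_{\sigma<\lambda}\bigl(K+\p^\sigma T\bigr)=\epsilon^{-1}\!\Bigl(\bigcap_{\sigma<\lambda}\p^\sigma P_\lambda\Bigr)=\epsilon^{-1}\bigl(\p^\lambda P_\lambda\bigr)=\epsilon^{-1}\langle\lambda\rangle
\]
by Lemma~\ref{lem:prop_p_beta}(2). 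Since $\p^\lambda P_\lambda=\langle\lambda\rangle\cong R/(\p)$ is nonzero and $\epsilon$ is onto, $\overline K\supsetneq\epsilon^{-1}(0)=K$, so $K$ is not closed in $T$. As a complete subgroup of a Hausdorff topological abelian group is closed, $K$ cannot be complete in the subspace topology, hence not in its $\p^\lambda$-adic topology.

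I expect the main obstacle to be twofold: the bookkeeping in the surjectivity step, i.e.\ verifying that the partially prescribed map $P_n\to P_\lambda$ genuinely extends to a homomorphism (this is where the inequality $\beta_k\geq n-k$ is used), and the topological point that the $\p^\lambda$-adic topology on $K$ agrees with the one induced from the complete module $T$ -- which is precisely what forces the incompleteness of $K$ to be witnessed by the single ``leftover'' submodule $\p^\lambda P_\lambda\cong R/(\p)$ of $P_\lambda$.
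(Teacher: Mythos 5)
Your proof is correct and follows essentially the same route as the paper: the same precover $T\to P_\lambda$, $\lambda$-balancedness via Proposition~\ref{prop:balanced_sequence}, completeness of $T$ via Proposition~\ref{prop:direct_sums}, and the closure computation $\overline K/K\cong\p^\lambda P_\lambda\cong R/(\p)$ using $\lambda$-niceness. The only (harmless) divergence is in the surjectivity step, where the paper simply uses the embeddings $\nu_\beta\colon P_\beta\to P_\lambda$, $\beta\beta_1\cdots\beta_n\mapsto\lambda\beta_1\cdots\beta_n$, rather than your branch-by-branch construction of maps $P_n\to P_\lambda$.
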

\begin{proof} Put $T=\bigoplus_{\beta<\lambda} P_{\beta}^{(\Hom(P_{\beta}, P_{\lambda}))}$ and consider the obvious morphism $\phi\colon T\to P_{\lambda}$. Note that  $\phi$ is surjective since for each $\beta<\lambda$ the map $\nu_{\beta}$ defined by $\nu_{\beta}( \beta\beta_1\beta_2\dots\beta_n)= \lambda\beta_1\beta_2\dots\beta_n $
induces an embedding $P_\beta \to P_\lambda$. Hence every generator of $P_\lambda$ is in the image of $\nu_{\beta}$ for some $\beta$. Then $0\to K\to T\to P_{\lambda}\to 0$ is $\lambda$-balanced by Proposition~\ref{prop:balanced_sequence}.
 
By Proposition~\ref{prop:direct_sums}, $T$ is complete in the $\p^{\lambda}$-adic topology. Since $K$ is $\lambda$-balanced in $T$, the subspace topology induced on $K$ is precisely the $\p^{\lambda}$-adic topology on $K$. Thus $K$ is complete if and only if it is closed in $T$.
Denoting by $\overline K$ the closure of $K$ in $T$, we get
\[ \overline K = \bigcap_{\sigma<\lambda}(\p^{\sigma} T+K) \quad {\rm and} \quad \dfrac{\p^{\sigma} T+K}K = \p^{\sigma}\left(\dfrac{T}K\right), \]
for every $\sigma<\lambda$,  since $K$ is $\lambda$-nice in $T$. Therefore
\[
\dfrac{\overline K} K =
\bigcap_{\sigma<\lambda}\dfrac{\p^{\sigma} T+K}K =
\bigcap_{\sigma<\lambda}\p^{\sigma}\left(\dfrac{T}K\right) =
\p^{\lambda}\dfrac{T}K =
\p^{\lambda}P_{\lambda} \cong
R/(\p),
\]
so $K$ is neither closed in $T$ nor complete in the $\p^\lambda$-adic topology.
\end{proof} 
Now we can state the main result of the section. First we recall that for a regular cardinal $\lambda$ a  partially ordered set is called \emph{$\lambda$-directed} if each subset of cardinality smaller than $\lambda$ has an upper bound. The colimit of a $\lambda$-directed system is called a $\lambda$-directed colimit. 

\begin{Thm}\label{thm:K-pure}
Let $R$ be a discrete valuation domain and $\lambda$ be an uncountable regular cardinal. Then there is a $\lambda$-pure exact sequence
\[ 0\la K\la T\la P_{\lambda}\la 0\]
\st $T$ is $\lambda$-pure projective, but $K$ is not. In particular, both the $\lambda$-pure projective dimension of $P_\lambda$ and the $\lambda$-pure global dimension of $\Mod R$ are at least two.
\end{Thm}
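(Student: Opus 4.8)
The plan is to take the $\lambda$-balanced exact sequence $0\to K\to T\to P_\lambda\to 0$ of Proposition~\ref{prop:presentation}, show it is in fact $\lambda$-pure, and then extract the dimension bounds. Two of the three things needed are easy. First, $T$ is $\lambda$-pure projective: for each $\beta<\lambda$ the module $P_\beta$ has at most $\max(|\beta|,\aleph_0)<\lambda$ generators, hence is $<\lambda$-generated and, $R$ being noetherian, $<\lambda$-presented; a direct sum of such modules is $\lambda$-pure projective. Second, $K$ is not $\lambda$-pure projective: as a submodule of $T$, which is reduced and torsion (each $P_\beta$ is reduced by Lemma~\ref{lem:prop_p_beta}(2) and torsion by construction), $K$ is itself reduced and torsion, so if it were $\lambda$-pure projective it would be complete in the $\p^\lambda$-adic topology by Proposition~\ref{prop:direct_sums}, contradicting Proposition~\ref{prop:presentation}.

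The crux is to upgrade ``$\lambda$-balanced'' to ``$\lambda$-pure'' for this sequence, and I would do this using the internal structure of $P_\lambda$. For $\alpha<\lambda$ let $N_\alpha$ be the submodule of $P_\lambda$ generated by the generators $\lambda\beta_1\dots\beta_n$ with $\beta_1<\alpha$. The $N_\alpha$ form an increasing chain; since $\lambda$ is a limit ordinal every generator of $P_\lambda$ lies in some $N_\alpha$, so $\bigcup_{\alpha<\lambda}N_\alpha=P_\lambda$; and the assignment $\alpha\gamma_1\dots\gamma_n\mapsto\lambda\gamma_1\dots\gamma_n$ identifies $P_\alpha$ with $N_\alpha$ for $\alpha\ge 1$ --- this is exactly the embedding $\nu_\alpha$ appearing in the proof of Proposition~\ref{prop:presentation}, with its image now named. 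Now let $Y$ be any $<\lambda$-presented module and $f\colon Y\to P_\lambda$ a morphism. Its image $D=f(Y)$ is a quotient of $Y$, hence $<\lambda$-generated, so by regularity of $\lambda$ it is contained in some $N_\alpha$; we may assume $\alpha\ge 1$. Thus the inclusion $D\hookrightarrow P_\lambda$, and therefore $f$, factors through $\nu_\alpha\colon P_\alpha\to P_\lambda$. Since the sequence is $\lambda$-balanced and $\alpha<\lambda$, Proposition~\ref{prop:balanced_sequence} lets us lift $\nu_\alpha$ along $T\to P_\lambda$, and composing this lift with the factorization of $f$ produces a lift of $f$ to $T$. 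Hence the sequence is $\lambda$-pure.

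Granting this, we work in $\Mod R$ with its $\lambda$-pure exact structure, which has enough projectives. The sequence $0\to K\to T\to P_\lambda\to 0$ then realizes $K$ as a first syzygy of $P_\lambda$ with $\lambda$-pure projective $T$, so a dimension shift gives $\Ext^2_{\lambda\text{-pure}}(P_\lambda,-)\cong\Ext^1_{\lambda\text{-pure}}(K,-)$; the right-hand side is nonzero precisely because $K$ is not $\lambda$-pure projective (equivalently, by Schanuel's lemma, $\lambda$-pure projective dimension of $P_\lambda$ at most one would make $K$ a direct summand of a $\lambda$-pure projective module). Therefore the $\lambda$-pure projective dimension of $P_\lambda$ is at least two, and a fortiori so is the $\lambda$-pure global dimension of $\Mod R$.

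The step I expect to be the real obstacle is the middle one. It is tempting to try to prove in general that $\lambda$-balanced exact sequences of torsion modules are $\lambda$-pure by invoking that small reduced torsion modules are totally projective, but this is false for $\lambda>\aleph_1$. The point is that one only ever needs to lift morphisms \emph{into} $P_\lambda$, and $P_\lambda$ is the union of its totally projective submodules $N_\alpha\cong P_\alpha$; every morphism from a $<\lambda$-presented module already lands in one of them, where Proposition~\ref{prop:balanced_sequence} applies directly.
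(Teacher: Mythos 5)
Your proposal is correct and follows essentially the same route as the paper: the sequence from Proposition~\ref{prop:presentation}, the observation that $P_\lambda$ is the $\lambda$-directed union of the images of the embeddings $\nu_\alpha\colon P_\alpha\to P_\lambda$ so that every map from a $<\lambda$-presented module factors through some $\nu_\alpha$ (whence $\lambda$-purity), and non-completeness of $K$ versus Proposition~\ref{prop:direct_sums} to rule out $\lambda$-pure projectivity. The only cosmetic difference is that you lift $\nu_\alpha$ via Proposition~\ref{prop:balanced_sequence}, whereas it also lifts tautologically through the coproduct component of $T$ indexed by $\nu_\alpha$; both are fine.
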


\begin{proof}
We again use the fact that for each $\beta<\lambda$ the mapping on generators
\[ \beta\beta_1\beta_2\dots\beta_n \to \lambda\beta_1\beta_2\dots\beta_n \]
induces an embedding $P_\beta \to P_\lambda$ and $P_\lambda$ is a $\lambda$-directed union of the images of all such embeddings. Therefore, any morphism $M \to P_\lambda$ with $M$ $<\lambda$-presented factors through some embedding $P_\beta \to P_\lambda$, $\beta<\lambda$, and the short exact sequence $0\to K\to T\to P_{\lambda}\to 0$ constructed in Proposition~\ref{prop:presentation} is $\lambda$-pure. Now, $T$ is $\lambda$-pure projective by the construction, but $K$ cannot be $\lambda$-pure projective by Proposition~\ref{prop:direct_sums}, since it is not complete.
\end{proof}

\begin{Rem} \label{rem:uncnt_essential}
The assumption that $\lambda$ is uncountable is essential. The usual pure global dimension of any discrete valuation domain is one.
\end{Rem}

\begin{Rem} \label{rem:value_of_pgldim}
One may ask what is the exact value of the $\lambda$-pure global dimension of $\Mod R$ for $\lambda$ uncountable. Is it two? Is it finite? We do not know. We will explain in Section~\ref{sec:conseq} why an answer may be interesting.
\end{Rem}

\section{The $\lambda$-pure projective dimension in accessible categories}
\label{sec:pproj_dim}

The main motivation for this paper was to find a ring whose $\lambda$-pure global dimension is greater than one for \emph{all} infinite regular cardinals $\lambda$. More details on consequences of having such a ring will be given in Section~\ref{sec:conseq}. With Theorem~\ref{thm:K-pure} we have succeeded for all cardinals but for $\aleph_0$. To remove the glitch, we need to look for other rings than discrete valuation domains. Here we will use results by Kaplansky~\cite{Kap}, Osofsky~\cite{Os} and Lenzing~\cite{Lenz}. First, however, we need to establish preliminary results in the spirit of~\cite{AR}.

\begin{Def} \label{def:accessible}
Let $\lambda$ be an infinite regular cardinal and $\A$ be a 
category with $\lambda$-directed colimits. An object $X \in \A$ is called \emph{$\lambda$-presentable} provided that for any $\lambda$-directed system $(Y_i \mid i \in I)$, the canonical map
\[ \varinjlim \Hom_\A(X, Y_i) \la \Hom_\A(X, \varinjlim Y_i) \]
is an isomorphism.

The category $\A$ is called \emph{$\lambda$-accessible} if it admits a set $\ES$ of $\lambda$-presentable objects \st each $Y \in \A$ is a $\lambda$-directed colimit of objects from $\ES$.

A full subcategory $\B$ of $\A$ is said to be a \emph{$\lambda$-accessible subcategory} if there is a set $\ES' \subseteq \B$ of objects which are $\lambda$-presentable in $\A$ and \st $\B$ consists precisely of $\lambda$-directed colimits of objects from $\ES'$.

For $\lambda = \aleph_0$, we will replace the prefix $\lambda$- by the word \emph{finitely}, we speak of finitely presentable objects and finitely accessible (sub)categories.
\end{Def}

One would expect, based on the terminology, that a $\lambda$-accessible subcategory is itself a $\lambda$-accessible category.
This is indeed the case, which we show using an argument inspired by~\cite[Proposition 2.1]{Lenz2}.

\begin{Lem} \label{lem:acces_sub}
Let $\lambda$ be an infinite regular cardinal, $\A$ a $\lambda$-accessible category and $\B$ a $\lambda$-accessible subcategory given by a set $\ES'$ of $\lambda$-presentable objects as in Definition~\ref{def:accessible}. Then the following is equivalent for $Y \in \A$:
\begin{enumerate}
  \item $Y \in \B$;
  \item For any $\lambda$-presentable object $X \in \A$, any morphism $X \to Y$ factors through an object from $\ES'$. 
\end{enumerate}
In particular, $\B$ is closed under taking $\lambda$-directed colimits in $\A$ and it is itself a $\lambda$-accessible category.
\end{Lem}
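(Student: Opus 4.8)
The plan is to prove the equivalence (1)$\Leftrightarrow$(2) first, and then deduce the closure under $\lambda$-directed colimits and the fact that $\B$ is $\lambda$-accessible as consequences. The implication (1)$\Rightarrow$(2) is the easy direction: if $Y \in \B$, then by definition $Y = \varinjlim_{i \in I} Z_i$ is a $\lambda$-directed colimit of objects $Z_i \in \ES'$; given a $\lambda$-presentable $X \in \A$ and a morphism $X \to Y$, the canonical isomorphism $\varinjlim \Hom_\A(X, Z_i) \cong \Hom_\A(X, \varinjlim Z_i)$ shows that $X \to Y$ factors through the colimit cocone map $Z_i \to Y$ for some $i$, hence through an object of $\ES'$.

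For the converse (2)$\Rightarrow$(1), I would proceed as follows. Since $\A$ is $\lambda$-accessible, write $Y$ as a $\lambda$-directed colimit $Y = \varinjlim_{j \in J} X_j$ with each $X_j$ in the distinguished set $\ES$ of $\lambda$-presentable objects of $\A$ and structure maps $f_j\colon X_j \to Y$. By hypothesis (2), each $f_j$ factors as $X_j \to S_j \to Y$ with $S_j \in \ES'$. The goal is to assemble these factorizations into a $\lambda$-directed system over $\ES'$ whose colimit is $Y$. The natural device is to pass to the comma category $\ES' \downarrow Y$ (or rather a small skeleton of it): objects are pairs $(S, g)$ with $S \in \ES'$ and $g\colon S \to Y$, morphisms are the evident triangles over $Y$. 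One shows this category is $\lambda$-filtered. Directedness of finite/$<\lambda$-sized diagrams uses that $\ES'$ is closed under the relevant $<\lambda$-colimits inside $\B$ — which holds because $\B$, being the closure of $\ES'$ under $\lambda$-directed colimits, and $\ES'$ consisting of $\lambda$-presentable objects, forces $\ES'$ to contain (up to the system) all $<\lambda$-presented objects of $\B$; alternatively one invokes the standard accessibility machinery (cf.\ \cite[\S2]{AR}) that a $\lambda$-filtered category with a $\lambda$-small cofinal-enough subcategory can be replaced by a $\lambda$-directed poset. Having established that $\ES' \downarrow Y$ is (essentially small and) $\lambda$-directed, its colimit $\varinjlim_{(S,g)} S$ receives a canonical map to $Y$; using (2) together with the factorizations of the $f_j$ one checks this map is both a split epimorphism and a monomorphism at the level of the colimit system, hence an isomorphism, so $Y \in \B$.

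The two displayed "in particular" claims now follow quickly. Closure under $\lambda$-directed colimits: if $(Y_k)$ is a $\lambda$-directed system in $\B$ with colimit $Y$ (computed in $\A$), then any morphism $X \to Y$ from a $\lambda$-presentable $X \in \A$ factors through some $Y_k$ by $\lambda$-presentability, and then through an object of $\ES'$ by criterion (2) applied to $Y_k \in \B$; hence $Y$ satisfies (2), so $Y \in \B$. Finally, $\B$ is itself $\lambda$-accessible with the same set $\ES'$ of generators: the objects of $\ES'$ are $\lambda$-presentable in $\B$ because $\lambda$-directed colimits in $\B$ agree with those in $\A$ (just shown) and they were $\lambda$-presentable in $\A$; and every object of $\B$ is a $\lambda$-directed colimit of objects from $\ES'$ by the very definition of a $\lambda$-accessible subcategory.

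The main obstacle is the converse direction, specifically verifying that $\ES' \downarrow Y$ (or a suitable skeleton) is genuinely $\lambda$-directed and that its colimit maps isomorphically onto $Y$. The delicate point is coherence: the factorizations $X_j \to S_j \to Y$ are not canonical and need to be made compatible across the index set $J$, which is exactly where one needs $\ES'$ to be closed under $<\lambda$-small colimits within $\B$ — a fact one must either extract from the hypothesis or borrow from the general theory of accessible categories as in~\cite{AR}. Once that closure property is in hand, the rest is the routine colimit bookkeeping that I would not spell out in full.
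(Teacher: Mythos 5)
Your (1)$\Rightarrow$(2) direction and the two ``in particular'' claims are fine and agree with the paper's argument. The genuine gap is in (2)$\Rightarrow$(1). You base the $\lambda$-filteredness of the comma category $\ES'\downarrow Y$ on the claim that $\ES'$ is closed (``up to the system'') under $<\lambda$-small colimits inside $\B$, arguing this is forced because $\B$ is the closure of $\ES'$ under $\lambda$-directed colimits and $\ES'$ consists of $\lambda$-presentable objects. That is not a hypothesis and is false in general: $\ES'$ is an arbitrary set of $\lambda$-presentable objects of $\A$ (it could consist of a single object), so it need not contain finite coproducts of its members, nor all $<\lambda$-presentable objects of $\B$; the most one can extract from the definition is that every $\lambda$-presentable object of $\B$ is a \emph{retract} of an object of $\ES'$ (as in the proof of Lemma~\ref{lem:context_change}). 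Likewise, your final step that the canonical map $\varinjlim_{(S,g)} S \to Y$ is ``a split epimorphism and a monomorphism'' is not an argument in a general $\lambda$-accessible category. So the core of the implication --- producing a $\lambda$-directed diagram over $\ES'$ with colimit $Y$ from the non-canonical factorizations $X_j \to S_j \to Y$ --- is exactly the point you leave unresolved, deferring it to unspecified accessibility machinery.

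The way to close the gap (and the route the paper takes) avoids both your closure claim and the coherence problem: let $\ES$ be a representative set of \emph{all} $\lambda$-presentable objects of $\A$. By \cite[Proposition 2.8]{AR}, $Y$ is the colimit of its canonical diagram indexed by the $\lambda$-filtered comma category $\ES\downarrow Y$; this diagram uses all morphisms from $\lambda$-presentable objects to $Y$, so there are no choices to be made compatible. Condition (2) says precisely that the full subcategory $\ES'\downarrow Y$ is cofinal in $\ES\downarrow Y$, and a full subcategory of a $\lambda$-filtered category into which every object admits a morphism is itself $\lambda$-filtered and cofinal; hence $Y$ is a $\lambda$-filtered colimit of objects of $\ES'$, and \cite[Theorem 1.5]{AR} converts $\lambda$-filtered into $\lambda$-directed, giving $Y \in \B$. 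In particular no closure of $\ES'$ under any colimits is needed.
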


\begin{proof}
We denote by $\ES$ a representative set of all $\lambda$-presentable objects in $\A$. Note that by~\cite[Theorem 1.5]{AR}, we can freely use $\lambda$-filtered colimits instead of $\lambda$-directed ones. Moreover, \cite[Proposition 2.8]{AR} tells us that $Y$ is the $\lambda$-filtered colimit of its canonical diagram \wrt $\ES$, indexed by the comma category $\ES\downarrow Y$.

Suppose that $(2)$ holds. This is nothing but to say that the comma category $\ES'\downarrow Y$ is cofinal in $\ES\downarrow Y$. Hence $Y \in \B$, since it is a $\lambda$-filtered colimit of objects from~$\ES'$. Conversely assuming $(1)$, one directly infers $(2)$, again since $Y$ is a $\lambda$-filtered colimit of objects from $\ES'$. The final statement is easy because condition (2) is preserved by $\lambda$-directed colimits in $\A$.
\end{proof}

Note that $\Mod R$ is a finitely accessible category for any ring $R$. Finitely accessible subcategories of module categories were studied for instance in~\cite{Lenz2}. An important fact is that each additive $\lambda$-accessible category is a $\lambda$-accessible subcategory of a module category if we use a suitable notion of modules. Given a small additive category $\ES$, we use $\Mod\ES$, the category of all \emph{right $\ES$-modules}, that is, the category of additive contravariant functors $\ES \to \Ab$. What we get is an additive version of the Representation Theorem~\cite[2.26]{AR}.

\begin{Prop} \label{prop:representability}
For each infinite regular cardinal $\lambda$, the following are equivalent for an additive category $\A$:
\begin{enumerate}
 \item $\A$ is $\lambda$-accessible;
 \item There is a small additive category $\ES$ \st $\A$ is equivalent to the full subcategory of $\Mod\ES$ formed by all colimits of $\lambda$-directed systems of functors of the form $\Hom_\ES(-,S)$, $S \in \ES$. 
\end{enumerate}
\end{Prop}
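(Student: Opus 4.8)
The plan is to prove this as the additive counterpart of the Representation Theorem~\cite[2.26]{AR}, with the equivalence realized by the restricted Yoneda embedding. For the direction $(2)\Rightarrow(1)$ I would argue as follows. The functor category $\Mod\ES$ is a Grothendieck category which is locally finitely presentable; in particular it is $\lambda$-accessible. By the additive Yoneda lemma there is a natural isomorphism $\Hom_{\Mod\ES}(\Hom_\ES(-,S),F)\cong F(S)$ for $S\in\ES$ and $F\in\Mod\ES$, and since colimits in $\Mod\ES$ are computed pointwise, this shows that each representable $\Hom_\ES(-,S)$ is finitely presentable, hence $\lambda$-presentable, in $\Mod\ES$. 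Consequently the set $\ES'=\{\Hom_\ES(-,S)\mid S\in\ES\}$ consists of $\lambda$-presentable objects of $\Mod\ES$, and the full subcategory of $\Mod\ES$ described in~(2) is precisely the $\lambda$-accessible subcategory of $\Mod\ES$ determined by $\ES'$ in the sense of Definition~\ref{def:accessible}. By Lemma~\ref{lem:acces_sub} this subcategory is itself a $\lambda$-accessible category, and hence so is the equivalent category~$\A$.

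For the converse $(1)\Rightarrow(2)$: since $\A$ is additive it has finite biproducts. Starting from any set of $\lambda$-presentable objects witnessing the $\lambda$-accessibility of $\A$ and closing it under finite biproducts (which preserves $\lambda$-presentability, a finite coproduct of $\lambda$-presentable objects being $\lambda$-presentable), I would choose a small full additive subcategory $\ES\subseteq\A$ of $\lambda$-presentable objects such that every object of $\A$ is a $\lambda$-directed colimit of objects from $\ES$. Now consider the functor $E\colon\A\to\Mod\ES$ given by $E(A)=\Hom_\A(-,A)|_{\ES}$; it is additive because each $\Hom_\A(S,-)$ is. Since the objects of $\ES$ are $\lambda$-presentable and colimits in $\Mod\ES$ are pointwise, $E$ preserves $\lambda$-directed colimits; writing an arbitrary $A\in\A$ as a $\lambda$-directed colimit $A=\varinjlim_i S_i$ with $S_i\in\ES$, we obtain $E(A)=\varinjlim_i E(S_i)=\varinjlim_i\Hom_\ES(-,S_i)$, so $E$ takes values in the full subcategory $\B'$ of $\Mod\ES$ described in~(2).

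It then remains to verify that $E\colon\A\to\B'$ is an equivalence. For essential surjectivity: any $F\in\B'$ has the form $\varinjlim_i\Hom_\ES(-,S_i)$ for a $\lambda$-directed system $(S_i)$ in $\ES$; the colimit $A=\varinjlim_i S_i$ exists in $\A$ (which has $\lambda$-directed colimits), and $E(A)\cong F$ because $E$ preserves this colimit. For fullness and faithfulness: given $A,B\in\A$ with $A=\varinjlim_i S_i$ as above, the additive Yoneda lemma together with the fact that $\Hom$ turns colimits into limits gives
\[ \Hom_{\Mod\ES}(E(A),E(B))=\varprojlim_i\Hom_{\Mod\ES}(\Hom_\ES(-,S_i),E(B))=\varprojlim_i E(B)(S_i)=\varprojlim_i\Hom_\A(S_i,B)=\Hom_\A(A,B), \]
and one checks that the resulting isomorphism is exactly the map induced by $E$. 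This yields the desired equivalence $\A\simeq\B'$.

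The argument uses no essentially new idea beyond~\cite{AR}: it is a transcription of the Representation Theorem into the additive, $\Ab$-enriched setting. I expect the only real care to be needed in the bookkeeping for $(1)\Rightarrow(2)$ --- arranging that $\ES$ be simultaneously small, additive, and dense enough --- and in phrasing $(2)\Rightarrow(1)$ so that the subcategory in~(2) matches the hypotheses of Definition~\ref{def:accessible} and Lemma~\ref{lem:acces_sub}; the categorical computations themselves are routine.
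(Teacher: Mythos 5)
Your proposal is correct and follows essentially the same route as the paper, which simply cites the Representation Theorem \cite[2.26]{AR} and notes that the equivalence is given by the restricted Yoneda embedding $A \mapsto \Hom_\A(-,A)|_{\ES}$ with $\ES$ a representative set of $\lambda$-presentable objects; you have merely written out the standard details (pointwise colimits, Yoneda, preservation of $\lambda$-directed colimits, and the fully-faithful/essentially-surjective verification) that the paper leaves to the reader.
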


\begin{proof}
The proof is the same as for~\cite[Theorem 2.26]{AR}, when replacing the category of sets by the category of abelian groups and functors by additive functors. Note that given (1), we can take for $\ES$ a representative set for all $\lambda$-presentable objects of $\A$. The equivalence in (2) is then given by the Yoneda embedding sending an object $A \in \A$ to the contravariant functor $\Hom_{\A}(-, A) |_{\ES}$.
\end{proof}

A natural question is whether a $\lambda$-accessible category is also $\lambda'$-accessible for another cardinal $\lambda'$. For the sake of simplicity, we only give a special case of what is called ``raising the index of accessibility'' in~\cite{AR}.

\begin{Lem} \label{lem:raising_access}
Let $\A$ be a finitely accessible category and $\B$ a finitely accessible subcategory. Then $\A$ is a $\lambda$-accessible category and $\B$ is a $\lambda$-accessible subcategory for each infinite regular cardinal $\lambda$.
\end{Lem}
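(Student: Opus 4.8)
The plan is to reduce the statement for $\A$ to the classical ``raising the index of accessibility'' theorem and then to handle $\B$ by combining that with the intrinsic characterization of accessible subcategories obtained in Lemma~\ref{lem:acces_sub}. First I would recall that $\Mod R$, being finitely accessible, is in particular locally finitely presentable if it is additionally cocomplete; in any case, by the general theory of accessible categories (for instance \cite[Theorem 2.11]{AR}, or the ``Uniformization Theorem'' \cite[Theorem 2.19]{AR}), a $\mu$-accessible category is $\lambda$-accessible whenever $\mu \trianglelefteq \lambda$ in the sense of the sharp inequality of Makkai--Paré; and $\aleph_0 \trianglelefteq \lambda$ for every infinite regular $\lambda$. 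This immediately gives that $\A$ is $\lambda$-accessible. Concretely, one takes as the new set of $\lambda$-presentable generators the closure of the finitely presentable objects under $<\lambda$-small colimits: since $\lambda$ is regular, a $<\lambda$-small colimit of finitely presentable objects is $\lambda$-presentable, and every object of $\A$, being a filtered (hence $\lambda$-filtered) colimit of finitely presentable objects, can be rewritten as a $\lambda$-directed colimit of such $<\lambda$-small colimits by a standard refinement of the indexing category.

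For the subcategory $\B$, the key point is that the characterization in Lemma~\ref{lem:acces_sub}(2) is ``cardinal-robust''. Concretely, I would argue as follows. Let $\ES'$ be the given set of finitely presentable (in $\A$) objects with $\B = \varinjlim$-closure of $\ES'$ under filtered colimits. Put $\ES''$ to be a representative set of all objects of $\B$ that arise as $<\lambda$-small filtered colimits of objects of $\ES'$; equivalently, by Lemma~\ref{lem:acces_sub} applied at index $\aleph_0$ together with the closure of $\B$ under filtered colimits, one checks that every such object is $\lambda$-presentable in $\A$ (a $<\lambda$-small filtered colimit of finitely presentable objects is $\lambda$-presentable since $\lambda$ is regular). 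It then remains to see that every object $Y \in \B$ is a $\lambda$-directed colimit of objects from $\ES''$: but $Y$ is a filtered colimit of objects of $\ES' \subseteq \ES''$, and any filtered colimit can be re-expressed as a $\lambda$-directed colimit of its $<\lambda$-small sub-colimits, which lie in $\B$ and are built from $\ES'$, hence lie (up to isomorphism) in $\ES''$. This exhibits $\B$ as a $\lambda$-accessible subcategory of $\A$ in the sense of Definition~\ref{def:accessible}.

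The step I expect to be the main obstacle — or at least the one requiring the most care — is the purely combinatorial fact that a filtered colimit can be rewritten as a $\lambda$-directed colimit of $<\lambda$-presentable pieces in a way compatible with membership in $\B$; this is exactly the content of the refinement arguments in \cite[\S2.3]{AR} (the proof that $\mathrm{Ind}_\lambda$ of a $<\lambda$-small category is $\lambda$-accessible, and the sharp-inequality machinery). Rather than reproving it, I would invoke \cite{AR} directly: the statement for $\A$ is \cite[Theorem 2.11]{AR} (or the remark following it about $\aleph_0 \trianglelefteq \lambda$), and the statement for $\B$ follows formally once one knows (i) $\B$ is closed under $\lambda$-directed colimits in $\A$, which is the ``in particular'' clause of Lemma~\ref{lem:acces_sub}, and (ii) $\B$ has a set of $\lambda$-presentable (in $\A$) generators under $\lambda$-directed colimits, namely $\ES''$ as above. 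One small point worth spelling out is that $\lambda$-presentability is computed in $\A$, not in $\B$, but since $\B$ is closed under $\lambda$-directed colimits in $\A$, an object of $\ES''$ that is $\lambda$-presentable in $\A$ is a fortiori $\lambda$-presentable in $\B$, so no ambiguity arises. I would end by remarking that the same proof, with $\aleph_0$ replaced by an arbitrary regular $\mu \le \lambda$, gives the general ``raising the index'' statement, which is why we only stated the case $\mu = \aleph_0$ that we actually use.
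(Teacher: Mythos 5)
Your argument is correct and takes essentially the same route as the paper: both reduce to \cite[2.11--2.13]{AR}, taking as the new $\lambda$-presentable generators the directed colimits of fewer than $\lambda$ finitely presentable objects (respectively, of objects of $\ES'$ in the case of $\B$) and rewriting an arbitrary directed colimit as the $\lambda$-directed colimit of its sub-colimits over directed subsets of size $<\lambda$, with Lemma~\ref{lem:acces_sub} ensuring that these pieces and their $\lambda$-directed colimits remain in $\B$. The only nitpick is that one should speak of $<\lambda$-small \emph{directed} (or filtered) colimits rather than arbitrary $<\lambda$-small colimits, since the latter need not exist in a finitely accessible category --- as your second paragraph in fact does.
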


\begin{proof}
This is proved in~\cite[2.11--2.13]{AR}. Let us give a sketch of the argument. Every finitely accessible category has $\lambda$-directed colimits for each infinite regular cardinal $\lambda$.
Let $\C$ be the collection of all objects of $\A$ which are directed colimits of fewer than $\lambda$ finitely presentable objects of $\A$. Then $\C$ is skeletally small and every object of $\C$ is $\lambda$-presentable. It is now enough to show that every object $A$ of $\A$ is a $\lambda$-directed colimit of objects of $\C$. Write $A$ as a directed colimit of  a directed system $(S_i \mid i \in I)$, where the $S_i$'s are finitely presentable and let $I'$ be the poset of all the directed subsets $J\subseteq I$ with cardinality less than $\lambda$, ordered by inclusion. Clearly $I'$ is $\lambda$-directed. For each $J\in I'$, let $C_J\in \C$ be the directed colimit of $(S_i \mid i \in J)$. For $J\subseteq J'$ there are canonical morphisms $C_J\to C_{J'}$ and also $C_J\to A$. It is easy to see that $A$ is the $\lambda$-directed colimit of the system $(C_J \mid J \in I')$.
\end{proof}

The crucial point for us is that we can define the $\lambda$-pure exact structure not only on a module category as in Section~\ref{sec:adic_topology}, but on any additive $\lambda$-accessible category; see also~\cite[\S 2.D]{AR}. Let us call a sequence of morphisms
\[ 0 \la A \overset{i}\la B \overset{d}\la C \la 0 \]
in $\A$ \emph{$\lambda$-pure exact} if the application of $\Hom_\A(X,-)$ gives a short exact sequence of abelian groups for all $\lambda$-presentable objects $X \in \A$. Given such a sequence, the morphism $i\colon A \to B$ is said to be a \emph{$\lambda$-pure monomorphism}, while the morphism $d\colon B \to C$ is a \emph{$\lambda$-pure epimorphism}. As a consequence of Proposition~\ref{prop:representability}, $\A$ with this class of sequences becomes again an exact category. In particular in any $\lambda$-pure sequence we have $i = \Ker d$ and $d = \Coker i$. For details on exact categories we refer to~\cite[Appendix A]{Kel} or~\cite{Bu}.

\begin{Prop} \label{prop:exact_catg}
Let $\lambda$ be an infinite regular cardinal and $\A$ an additive $\lambda$-accessible category. Then $\A$ together with the class $\E_\lambda$ of all $\lambda$-pure exact sequences satisfies the axioms of an exact category. Moreover, each morphism $d\colon B \to C$ \st $\Hom_\A(X,d)$ is surjective for all $\lambda$-presentable objects $X \in \A$ has a kernel, so it is a $\lambda$-pure epimorphism in $\A$.

If $\A$ has coproducts, it also has enough projectives \wrt this exact structure. Namely, for each $Y \in \A$ there is a $\lambda$-pure exact sequence
\[ 0 \la K \la P \overset{d}\la Y \la 0 \]
\st $P \cong \bigoplus X^{(\Hom_\A(X,Y))}$, where $X$ runs over all isoclasses of $\lambda$-presentable objects of $\A$, and $d$ is the obvious morphism.
\end{Prop}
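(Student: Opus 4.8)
The plan is to transport the whole statement into a module category by means of Proposition~\ref{prop:representability}. First I would fix a representative set $\ES$ of the $\lambda$-presentable objects of $\A$, which we may assume closed under finite direct sums (possible since $\A$, being additive, has finite biproducts and these are $\lambda$-presentable when the summands are). The additive Yoneda embedding then identifies $\A$ with the full subcategory $\B \subseteq \Mod\ES$ of all $\lambda$-directed colimits of representable functors $\Hom_\ES(-,S)$, $S \in \ES$, carrying a $\lambda$-presentable object of $\A$ to a retract of a representable. Two facts then do most of the work. First, each representable $\Hom_\ES(-,S)$ is projective in $\Mod\ES$, since by the Yoneda lemma $\Hom_{\Mod\ES}(\Hom_\ES(-,S),-)$ is evaluation at $S$, an exact functor; as exactness of functors is objectwise, a sequence in $\A$ is $\lambda$-pure exact exactly when its image in $\Mod\ES$ is short exact with all three terms in $\B$. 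Second, $\B$ is a $\lambda$-accessible subcategory of $\Mod\ES$ in the sense of Definition~\ref{def:accessible}, witnessed by the representables, so Lemma~\ref{lem:acces_sub} applied with ambient category $\Mod\ES$ gives the working criterion: an object $G$ of $\Mod\ES$ lies in $\B$ if and only if every morphism into $G$ from a $\lambda$-presentable object of $\Mod\ES$ factors through a representable.

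Granting these, the first assertion reduces to closure of $\B$ under extensions in the abelian category $\Mod\ES$: then the asserted exact structure on $\A$ is the one inherited from $\Mod\ES$, which is exact by the standard criterion for extension-closed subcategories (\cite[Appendix A]{Kel}, \cite{Bu}), and its conflations are exactly the $\lambda$-pure exact sequences. To prove extension-closure I would take a short exact sequence $0 \to F_A \xrightarrow{i} E \xrightarrow{e} F_C \to 0$ in $\Mod\ES$ with $F_A, F_C \in \B$ and a morphism $g\colon X \to E$ from a $\lambda$-presentable $X$, factor $eg$ through a representable $P$, lift the resulting map $P \to F_C$ along $e$ using projectivity of $P$, note that $g$ minus this lift kills $e$ and hence factors through $\Ker e = F_A$, and finally factor the induced morphism $X \to F_A$ through a representable $Q$; this displays $g$ as factoring through $P \oplus Q$, which is again a representable, so $E \in \B$ by Lemma~\ref{lem:acces_sub}.

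For the second assertion, a morphism $d\colon B \to C$ with $\Hom_\A(X,d)$ surjective for all $\lambda$-presentable $X$ corresponds under the identification to an epimorphism $d\colon F_B \to F_C$ in $\Mod\ES$; I would put $K = \Ker d$, computed in $\Mod\ES$, and reduce everything to showing $K \in \B$. Indeed, once that is known, $0 \to K \to F_B \xrightarrow{d} F_C \to 0$ is short exact in $\Mod\ES$ with all terms in $\B$, hence $\lambda$-pure exact, so $K$ is a kernel of $d$ in $\A$ and $d$ is a $\lambda$-pure epimorphism. To prove $K \in \B$ I would invoke Lemma~\ref{lem:acces_sub} once more. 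Given $g\colon X \to K$ with $X$ $\lambda$-presentable, I would factor the composite $X \to K \hookrightarrow F_B$ through a representable, $X \xrightarrow{h} P \xrightarrow{u} F_B$; then $du$ vanishes on the image of $h$, so, $F_C$ being a $\lambda$-directed colimit of representables and $P$ being $\lambda$-presentable, $du$ factors through a representable $Q$ occurring in that colimit, and, passing to a later term of the colimit (here one uses $\lambda$-presentability of $X$: a morphism out of $X$ that vanishes in the colimit already vanishes at a finite stage), one may arrange that the composite $X \to P \to Q$ is zero. Lifting the structure map $Q \to F_C$ along $d$ by projectivity of $Q$ and subtracting then produces a morphism $P \to F_B$ with image in $K$ which still agrees with $u$ after precomposing with $h$; hence $g$ factors through the representable $P$, and $K \in \B$.

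Finally, when $\A$ has coproducts I would, for a given $Y$, pick a representative $X$ in each isomorphism class of $\lambda$-presentable objects, form $P = \bigoplus_X X^{(\Hom_\A(X,Y))}$ with the tautological morphism $d\colon P \to Y$, and observe that any $g\colon Z \to Y$ with $Z$ $\lambda$-presentable is $d$ precomposed with the inclusion of the summand labelled by $g$ (after identifying $Z$ with its chosen representative); thus $\Hom_\A(Z,d)$ is surjective for all $\lambda$-presentable $Z$, and the second assertion yields the $\lambda$-pure exact presentation $0 \to K \to P \to Y \to 0$. That $P$ is $\lambda$-pure projective is then immediate: every $\lambda$-presentable object is projective for this exact structure by the very definition of $\lambda$-purity, and a coproduct of projectives in an exact category with coproducts is projective (lift a morphism to the quotient summand by summand). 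The hard part, as the above indicates, is the claim $K \in \B$ in the second assertion --- i.e.\ that $\B$ is closed under kernels of objectwise surjective epimorphisms between its objects, which for $\lambda = \aleph_0$ and $\ES$ a single ring is just the classical fact that a surjection of flat modules has flat kernel --- and the delicate point in the chase is to pin $Q$ down inside a fixed $\lambda$-directed presentation of $F_C$ by representables and then move far enough along that presentation for the relevant composite out of $X$ to vanish on the nose, not merely in the colimit.
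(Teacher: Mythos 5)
Your proof is correct and follows essentially the same route as the paper: embed $\A$ into $\Mod\ES$ via Proposition~\ref{prop:representability}, identify $\lambda$-pure exact sequences with ambient short exact sequences using projectivity of the representables, verify extension-closure and existence of kernels through the factorization criterion of Lemma~\ref{lem:acces_sub}, and invoke the standard result on extension-closed subcategories. The only difference is one of detail: you carry out explicitly the diagram chases (in particular the passage to a later stage of the $\lambda$-directed presentation of $F_C$) that the paper delegates to ``a generalization of the arguments in the proof of \cite[Proposition 2.2]{Lenz2}.''
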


\begin{proof}
If we view $\A$ as a full subcategory of $\Mod\ES$ as in Proposition~\ref{prop:representability}(2), then any object of $\A$ is a $\lambda$-directed colimit of $\Hom$-functors and $\Hom$-functors are projective in $\Mod\ES$. Thus, a short exact sequence in $\Mod\ES$ with the third term in $\A$ is a $\lambda$-directed colimit of split exact pull-back sequences ending in $\Hom$-functors, so it is a $\lambda$-pure exact sequence in $\Mod\ES$ as in \cite[Proposition 7.16]{JL}. A generalization to higher cardinals of the arguments in the proof of ~\cite[Proposition 2.2]{Lenz2} (taking into account Lemma~\ref{lem:acces_sub} applied to the categories $\A$ and $\Mod\ES$), shows that a morphism $d$ as in the statement has kernel in $\A$ and that $\A$ is closed under extensions. Using~\cite[Lemma 10.20]{Bu} we conclude that the class $\E_\lambda$ of all $\lambda$-pure exact sequences gives rise to an exact category. This establishes the first part.
The second part is easy, but beware: Coproducts in $\A$ may not coincide with coproducts in $\Mod\ES$.
\end{proof}

Knowing that $\A$ is an exact category with enough projectives, we can again define the projective dimension of an object and the global dimension of $\A$ as an exact category. In the case considered in Proposition~\ref{prop:exact_catg}, we speak of \emph{$\lambda$-pure projective dimension} of $Y \in \A$, denoted $\lppd_\A Y$, and of \emph{$\lambda$-pure global dimension} of $\A$, denoted $\lpgldim\A$.

If now $\B \subseteq \A$ is a $\lambda$-accessible subcategory of $\A$ closed under taking coproducts and $Y \in \B$, we can compute $\lambda$-pure projective dimension of $Y$ both in $\A$ and $\B$. It is useful to learn that the ambient category does not matter:

\begin{Lem} \label{lem:context_change}
Let $\lambda$ be an infinite regular cardinal, $\A$ an additive $\lambda$-accessible category with coproducts and $\B$ a $\lambda$-accessible subcategory closed under taking coproducts. Then for any $Y \in \B$ we have
\[ \lppd_\A Y = \lppd_\B Y. \]
In particular, $\lpgldim \B \le \lpgldim \A$.
\end{Lem}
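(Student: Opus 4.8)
The plan is to build, for a given $Y\in\B$, a single $\lambda$-pure projective resolution that is simultaneously a resolution in $\B$ and in $\A$, and to read off $\lppd_\A Y=\lppd_\B Y$ from it.

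First I would set up the comparison of the two $\lambda$-pure exact structures. By Lemma~\ref{lem:acces_sub}, $\B$ is itself a $\lambda$-accessible category and is closed under $\lambda$-directed colimits in $\A$; being additive with coproducts, it carries its own $\lambda$-pure exact structure $\E_\lambda$ with enough projectives by Proposition~\ref{prop:exact_catg}. I would first record that the $\lambda$-presentable objects of $\B$ are exactly the $\lambda$-presentable objects of $\A$ lying in $\B$: one inclusion because $\lambda$-directed colimits in $\B$ are computed as in $\A$, the other because a $\lambda$-presentable object of $\B$ is a retract of an object of $\ES'$. Using this, the closure of $\B$ under coproducts, and the fact that any morphism from a $\lambda$-presentable object of $\A$ into an object of $\B$ factors through $\ES'$ (Lemma~\ref{lem:acces_sub}(2)), I would check two facts: a $\lambda$-pure epimorphism in $\B$ whose target lies in $\B$ is also a $\lambda$-pure epimorphism in $\A$ (lift a morphism from a $\lambda$-presentable object of $\A$ by first factoring it through $\ES'$ and then lifting over the epimorphism, which is possible since objects of $\ES'$ are $\lambda$-presentable in $\B$); and, consequently, the $\lambda$-pure projective objects of $\B$ are precisely the $\lambda$-pure projective objects of $\A$ that lie in $\B$ (the coproducts and retracts involved are the same in $\B$ as in $\A$).

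The crux — and the step I expect to be the main obstacle — is to show that $\B$ is closed under kernels of $\lambda$-pure epimorphisms whose source and target lie in $\B$, and that the resulting short exact sequence, computed in $\A$, is again $\lambda$-pure exact. The obvious attempt (given $f\colon X\to K$ with $X$ $\lambda$-presentable in $\A$, push $f$ into the source, factor through $\ES'$, and descend to $K$) breaks down because the factorization through $\ES'$ is not canonical. My remedy is a pullback argument. Let $p\colon P\to Y$ be a $\lambda$-pure epimorphism in $\A$ with $P,Y\in\B$ and $K$ its kernel in $\A$ (which exists by Proposition~\ref{prop:exact_catg}). Given $f\colon X\to K$ with $X$ $\lambda$-presentable in $\A$, I factor the composite $X\to K\to P$ as $X\to S'\to P$ with $S'\in\ES'$, compose $S'\to P$ with $p$ to get $\alpha\colon S'\to Y$ (so that the composite $X\to S'\to Y$ vanishes), and then pull the $\lambda$-pure exact sequence $0\to K_\B\to P\to Y\to 0$ — where $K_\B$ is the kernel of $p$ in $\B$ — back along $\alpha$. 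Since $S'$ is $\lambda$-pure projective in $\B$, the pulled-back sequence splits; chasing the resulting diagram I can rewrite $f$ as $\psi\circ h$, where $\psi\colon K_\B\to K$ is the canonical comparison map and $h\colon X\to K_\B$. As $K_\B\in\B$, the map $h$ factors through $\ES'$, hence so does $f$; by Lemma~\ref{lem:acces_sub}(2) this gives $K\in\B$. Comparing $K$ with $K_\B$ through their universal properties then shows $\psi$ is an isomorphism, so the sequence is, up to isomorphism, $0\to K_\B\to P\to Y\to 0$, which in particular is $\lambda$-pure exact in $\A$.

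With these steps the proof finishes quickly. For $Y\in\B$ I take a $\lambda$-pure projective resolution $\cdots\to P_1\to P_0\to Y\to 0$ in $\B$ (Proposition~\ref{prop:exact_catg} applied to $\B$) and denote its $n$-th syzygy by $\Omega^nY$. By the first step each $P_i$ is $\lambda$-pure projective in $\A$, and by the second step each short exact piece of the resolution is $\lambda$-pure exact in $\A$ while every $\Omega^nY$ remains in $\B$; so the same complex is a $\lambda$-pure projective resolution of $Y$ in $\A$ as well, and the syzygies $\Omega^nY$ are unambiguous. The usual characterization of projective dimension in an exact category with enough projectives then gives: $\lppd_\A Y\le n$ if and only if $\Omega^nY$ is $\lambda$-pure projective in $\A$, if and only if (by the first step, since $\Omega^nY\in\B$) $\Omega^nY$ is $\lambda$-pure projective in $\B$, if and only if $\lppd_\B Y\le n$. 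Hence $\lppd_\A Y=\lppd_\B Y$, and taking the supremum over $Y\in\B$ yields $\lpgldim\B=\sup_{Y\in\B}\lppd_\A Y\le\lpgldim\A$.
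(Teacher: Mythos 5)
Your overall strategy is the same as the paper's: build one $\lambda$-pure projective resolution of $Y$ inside $\B$ and check that it is simultaneously a resolution in $\A$. The preparatory steps (identifying the $\lambda$-presentable and the $\lambda$-pure projective objects of $\B$ with those of $\A$ lying in $\B$, and showing that $\lambda$-pure epimorphisms of $\B$ remain $\lambda$-pure epimorphisms in $\A$) are correct. The problem lies in the step you yourself single out as the crux: the pullback argument does not prove that $f\colon X\to K$ factors through $K_\B$. Unwind it: the split pullback of $p$ along $\alpha=pg$ produces some lift $g'\colon S'\to P$ of $\alpha$ through $p$, hence a decomposition $g=g'+\iota_\B c$ with $c\colon S'\to K_\B$. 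Composing with $h_0$ gives $\iota f=gh_0=\iota_\B(ch_0)+g'h_0$, and the residual term $g'h_0$ is merely another map $X\to P$ annihilated by $p$, i.e.\ an arbitrary element of $\Hom_\A(X,K)$ again, with no reason to vanish or to factor through $K_\B$. The only way to kill the residue is to map $X$ into the pullback via the pair $(\iota f,0)$, but the pullback is formed in $\B$ and its universal property is only available against test objects of $\B$; extending it to $X\in\A$ amounts to knowing that the kernel of $P\oplus S'\to Y$ computed in $\B$ is also its kernel in $\A$, which is an instance of the very statement being proved. So the argument is circular, or at best incomplete.

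The missing ingredient is the $\lambda$-presentability of $S'$ itself, played against the canonical $\lambda$-filtered colimit presentation $Y=\varinjlim T$ over $\ES'\downarrow Y$. Since $S'$ is $\lambda$-presentable, $pg$ factors as $e\beta$ through some leg $e\colon T\to Y$ of the cone; since $X$ is $\lambda$-presentable and $e\beta h_0=pgh_0=0$, one may pass to a later stage $\gamma\colon T\to T''$, $e''\gamma=e$, so that $\delta:=\gamma\beta$ satisfies $\delta h_0=0$ and $e''\delta=pg$. Now lift $e''$ through the $\lambda$-pure epimorphism $p$ to $m\colon T''\to P$ (possible since $T''\in\ES'$ is $\lambda$-pure projective in $\B$) and replace $g$ by $g-m\delta$: this map is genuinely annihilated by $p$, hence factors through $\iota\colon K\to P$ as $\iota k$, and still satisfies $(g-m\delta)h_0=\iota f$, so $f=kh_0$ factors through $S'$. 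By Lemma~\ref{lem:acces_sub} this gives $K\in\B$, after which $\psi$ is an isomorphism and the rest of your proof goes through. (The paper itself is silent on this point---it simply asserts that the canonical presentation is $\lambda$-pure in both categories---so you were right to flag it; only the proposed repair fails.)
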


\begin{proof}
First note that the $\lambda$-presentable objects of $\B$ are precisely the $\lambda$-presentable objects in $\A$ which belong to $\B$. To see this, let $\ES'\subseteq\B$ be a set of objects as in Definition~\ref{def:accessible} and consider $X \in \B$ expressed as a colimit of a $\lambda$-directed diagram $(S_i \mid i \in I)$ over $\ES'$. If $X$ is $\lambda$-presentable in $\B$, then $1_X$ factors through some $S_i$, so $X$ is a summand of an object of $\ES'$. Hence $X$ is $\lambda$-presentable in $\A$. The other implication is clear by Lemma~\ref{lem:acces_sub}, since $\B$ is closed under $\lambda$-directed colimits in $\A$.

Next consider the morphism $d\colon \bigoplus X^{(\Hom_\B(X,Y))} \to Y$, where $X$ runs up to isomorphism over all $\lambda$-presentable objects in $\B$. We have shown in Proposition~\ref{prop:exact_catg} that $d$ is a $\lambda$-pure epimorphism in $\B$. By Lemma~\ref{lem:acces_sub}, $d$ is also a $\lambda$-pure epimorphism in $\A$. Therefore, there is a sequence
\[ 0 \la K \la \bigoplus X^{(\Hom_\B(X,Y))} \overset{d}\la Y \la 0, \]
which is $\lambda$-pure both in $\A$ and $\B$, and the middle term is $\lambda$-pure projective also both in $\A$ and $\B$. Iterating the procedure, we see that there is a $\lambda$-pure projective resolution of $Y$ in $\B$ which is a $\lambda$-pure projective resolution in $\A$ as well, so the $\lambda$-pure projective dimensions must coincide. The rest is clear.
\end{proof}

Now we are ready to apply results of Kaplansky~\cite{Kap}, Osofsky~\cite{Os} and Lenzing~\cite{Lenz} to get one of the main results of the paper.
\begin{Thm} \label{thm:examples}
Let $k$ be a field. Then the following hold:
\begin{enumerate}
\item $\lpgldim (\Mod \KronAlg) \ge 2$ for any infinite regular cardinal $\lambda$, provided that $k$ is uncountable.
\item $\lpgldim (\Mod k[x,y]) \ge 2$ for any infinite regular cardinal $\lambda$, provided that $k$ is uncountable.
\item $\lpgldim (\Mod \powSxy) \ge 2$ for any infinite regular cardinal $\lambda$.
\end{enumerate}
\end{Thm}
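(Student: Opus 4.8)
The plan is to reduce each of the three cases to Theorem~\ref{thm:K-pure}, which already gives $\lpgldim(\Mod R) \ge 2$ for every discrete valuation domain $R$ and every uncountable regular cardinal $\lambda$. The only missing case is therefore $\lambda = \aleph_0$, i.e.\ ordinary purity, and for this we must invoke the cited results of Kaplansky, Osofsky and Lenzing: the usual pure global dimension of $\Mod R$ is $\ge 2$ when $R$ is one of $\KronAlg$ (for $k$ uncountable), $k[x,y]$ (for $k$ uncountable), or $\powSxy$. So for $\aleph_0$ there is nothing to prove beyond quoting these references. The content of the theorem is getting the \emph{uncountable} $\lambda$ for these three rings, none of which is a discrete valuation domain.

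For this I would exhibit, inside each of the three module categories, a full subcategory equivalent (as an additive category) to a finitely accessible subcategory coming from a discrete valuation domain, so that the machinery of Section~\ref{sec:pproj_dim} applies. Concretely: each of these rings has, among its (finite-dimensional, or finite-length) indecomposable modules, a ``tube'' or a chain of extensions whose category of finite-length objects is equivalent to the category of finite-length modules over $\powS$ (power series over $k$, or a suitable residue field). For instance for the Kronecker algebra $\KronAlg$ over an uncountable $k$ one picks a rational point of $\mathbb{P}^1_k$ and the homogeneous tube there; for $k[x,y]$ one localizes/completes at a maximal ideal of height two — no, rather one uses the fact that $k[x,y]$ has modules supported on a line, giving a copy of $\Mod$ of a DVR; and for $\powSxy$ one uses $\powSxy/(y) \cong \powS$. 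In each case the inclusion of this subcategory of finite-length modules extends, by taking $\aleph_0$-directed (i.e.\ ordinary directed) colimits, to a finitely accessible subcategory $\B \subseteq \Mod R$. By Lemma~\ref{lem:raising_access}, $\B$ is then a $\lambda$-accessible subcategory for every infinite regular $\lambda$, and it is closed under coproducts; by Lemma~\ref{lem:context_change} we get $\lpgldim \B \le \lpgldim(\Mod R)$. On the other hand $\B$ is additively equivalent to the corresponding finitely accessible subcategory of $\Mod S$ for a discrete valuation domain $S$, and by Lemma~\ref{lem:context_change} again (applied on the $S$ side) plus Theorem~\ref{thm:K-pure}, $\lpgldim \B \ge 2$ for every uncountable regular $\lambda$. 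Chaining the inequalities yields $\lpgldim(\Mod R) \ge 2$ for all uncountable $\lambda$, and the $\aleph_0$ case is the cited classical result.

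The step I expect to be the main obstacle — and the one requiring genuine input from \cite{Kap,Os,Lenz} — is producing, in each of the three cases, an honest additive equivalence between a finitely accessible subcategory of $\Mod R$ closed under coproducts and the category of \emph{all} $\powS$-modules (or at least a finitely accessible subcategory of it containing the relevant $P_\beta$'s and their resolutions). It is easy to match up the finite-length objects, but one must check that directed colimits are computed compatibly on both sides, that the subcategory one lands in is genuinely closed under coproducts in $\Mod R$, and that it actually contains the modules $K, T, P_\lambda$ from Theorem~\ref{thm:K-pure} (equivalently, that the DVR whose module category embeds is the right one — complete, so that $P_\lambda$ is built from honest modules). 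The uncountability of $k$ in cases (1) and (2) enters precisely here, guaranteeing enough rational points / that the relevant localization has residue field making the finite-length module category equivalent to that over a DVR with the correct completeness properties; for $\powSxy$ in case (3) no cardinality hypothesis is needed because $\powSxy/(y)\cong\powS$ is already complete. Once the equivalence is set up cleanly, the rest is a formal chase through Lemmas~\ref{lem:raising_access} and~\ref{lem:context_change} and Theorem~\ref{thm:K-pure}.
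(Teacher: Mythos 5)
Your proposal follows essentially the same route as the paper: the $\aleph_0$ case is exactly the cited Lenzing/Kaplansky--Osofsky results, and for uncountable $\lambda$ the paper embeds the category $\B$ of torsion $\powS$-modules as a finitely accessible, coproduct-closed subcategory of $\Mod R$ (via the tube $\varinjlim(\add\mathbf{t})$ for $\KronAlg$, and via the trivial $y$-action for $k[x,y]$ and $\powSxy$), then concludes by Lemma~\ref{lem:context_change} together with Theorem~\ref{thm:K-pure}. One correction: the uncountability of $k$ in (1) and (2) is needed only for the $\aleph_0$ case through those classical results, not where you place it --- the equivalences used for uncountable $\lambda$ exist over any field, and Theorem~\ref{thm:K-pure} requires no completeness of the discrete valuation domain, so the ``enough rational points/completeness'' worry is vacuous.
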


\begin{proof}
(1) Lenzing proved in~\cite[Proposition 3.1]{Lenz} that the $\aleph_0$-pure projective dimension of the generic Kronecker module
\[
G\colon \quad
\xymatrix{
k(x) \ar@<.5ex>[r]^{x \cdot -} \ar@<-.5ex>[r]_{1 \cdot -} & k(x)
}
\]
is equal to two for an uncountable field $k$. That is, the pure global dimension of $\Mod\KronAlg$ is at least two (in fact it is exactly two, see~\cite[Proposition 3.9]{Lenz2}).

Fix now an uncountable regular cardinal $\lambda$ and denote by $\B$ the class of all torsion $\powS$-modules. That is, we take precisely the $\powS$-modules for which the action of $x$ is locally nilpotent. Clearly, $\B$ is a finitely accessible subcategory of $\Mod\powS$. Moreover, combining Theorem~\ref{thm:K-pure} with Lemma~\ref{lem:context_change}, we get
\[ \lppd_\B P_\lambda \ge 2 \]
where $P_\lambda$ is as in Construction~\ref{constr:p_beta}. In particular, $\lpgldim \B \ge 2$. Now, it is straightforward to check that the assignment
\[
B \quad \mapsto \quad \Big(
\xymatrix{
B \ar@<.5ex>[r]^{x \cdot -} \ar@<-.5ex>[r]_{1 \cdot -} & B
}
\Big)
\]
induces an equivalence between $\B$ and a finitely accessible subcategory $\B'$ of $\Mod\KronAlg$. Namely, denoting by $\mathbf{t}$ the tube corresponding to the regular module (see~\cite{ARS} for missing terminology)
\[
\xymatrix{
k \ar@<.5ex>[r]^{0} \ar@<-.5ex>[r]_{1} & k
}
\]
one easily checks that $\B' = \varinjlim(\add\mathbf{t})$. Invoking Lemma~\ref{lem:context_change} again, we get:
\[ \lpgldim \Mod\KronAlg \ge \lpgldim \B' \ge 2. \]

(2) \& (3) Let $R = k[x,y]$ or $\powSxy$ and $Q$ stand for the fraction field of $R$. Regarding the $\aleph_0$-pure global dimension of $\Mod R$, Kaplansky and Osofsky computed in~\cite{Kap} and~\cite[Corollary 2.59]{Os}, respectively, that the projective dimension of $Q$ is equal to two. Since $Q$ is flat, the projective and pure projective dimensions of $Q$ coincide and the pure global dimension of $\Mod R$ is at least two.

For $\lambda$ regular uncountable, we would again like to construct an equivalence functor from the category $\B$ of torsion $\powS$-modules onto a finitely accessible subcategory of $\Mod R$. But this is easy: given a torsion $\powS$-module $B$, we can view it as an $R$-module with the trivial action of $y$. Thus again $\lpgldim \Mod R \ge \lpgldim \B \ge 2$.
\end{proof}

\section{Abundance of examples}
\label{sec:examples}

In Theorem~\ref{thm:examples}, we have presented a few examples of rings whose $\lambda$-pure global dimension is at least two for all infinite regular cardinals $\lambda$. In this section we show that these examples are not isolated, there is a vast range of others. Since $\aleph_0$-pure global dimension is well covered in the literature (see for instance~\cite{Kap,Os,Lenz,BBL1,BBL2}), we will focus on the case where $\lambda$ is uncountable. Our main result here is the following:

\begin{Thm} \label{thm:grothendieck_catg}
Let $\A$ be a locally finitely presentable Grothendieck category and $\lambda$ an uncountable regular cardinal. Suppose that $\A$ either contains a tube or is strictly wild. Then $\lpgldim\A \ge 2$.
\end{Thm}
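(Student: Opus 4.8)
The plan is to repeat the pattern of the proof of Theorem~\ref{thm:examples}: to exhibit inside $\A$ a finitely accessible subcategory $\B'$, closed under coproducts, which is equivalent to the category of torsion modules over a (possibly non-commutative) hereditary noetherian uniserial ring, and then to conclude by Lemma~\ref{lem:context_change}. Since $\A$ is locally finitely presentable Grothendieck it is finitely accessible, hence $\lambda$-accessible and has coproducts by Lemma~\ref{lem:raising_access}; a finitely accessible subcategory $\B'$ of it is then a $\lambda$-accessible subcategory by the same lemma, so Lemma~\ref{lem:context_change} gives $\lpgldim\A\ge\lpgldim\B'$. It is therefore enough to know that the $\lambda$-pure global dimension of the category of torsion modules over a hereditary noetherian uniserial ring is at least two for $\lambda$ uncountable regular; and this is precisely what one obtains by transporting the constructions of Sections~\ref{sec:balanced} and~\ref{sec:adic_topology} — the modules $P_\beta$ of Construction~\ref{constr:p_beta}, nice and $\lambda$-balanced submodules, the $\p^\lambda$-adic topology, Propositions~\ref{prop:completeness} and~\ref{prop:direct_sums}, and finally Theorem~\ref{thm:K-pure} combined with Lemma~\ref{lem:context_change} applied to the torsion submodule category — from discrete valuation domains to that more general setting, as announced at the beginning of Section~\ref{sec:balanced}.

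First I would treat the hypothesis that $\A$ contains a tube $\mathbf t$, say of rank $n$. Its objects have finite length, hence are finitely presentable in $\A$, so the full subcategory $\B' := \varinjlim(\add\mathbf t)$ of all colimits in $\A$ of $\lambda$-directed (equivalently filtered) systems from $\add\mathbf t$ is a finitely accessible subcategory of $\A$ by Lemma~\ref{lem:acces_sub}, and it is closed under coproducts. The representation-theoretic input is the standard identification of $\add\mathbf t$ with the category of finite-length modules over a basic hereditary noetherian uniserial ring $\Lambda$ — concretely, the completed tiled order realising the nilpotent representations of the $n$-cycle quiver — so that $\B'$ is equivalent to the category of torsion $\Lambda$-modules, and the extension of Theorem~\ref{thm:K-pure} gives $\lppd_{\B'}P_\lambda\ge 2$, whence $\lpgldim\B'\ge 2$. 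If one wishes to stay as close as possible to the already proved form of Theorem~\ref{thm:K-pure}, one can first shrink $\mathbf t$ to a homogeneous subtube: if $E$ is the indecomposable of quasi-length $n$ in $\mathbf t$ with a fixed socle, then every endomorphism of $E$ is zero or invertible (its image is simultaneously a sub- and a quotient object of $E$, forcing it to be $0$ or $E$), so $D := \End_\A(E)$ is a division ring, $E$ has a unique non-split self-extension, and the extension closure of $E$ in $\mathbf t$ is a homogeneous tube; one may then take for $\B'$ the category $\varinjlim(\langle E\rangle)$, equivalent to the category of torsion $D[\![t]\!]$-modules, and only needs Theorem~\ref{thm:K-pure} for the ``non-commutative discrete valuation rings'' $D[\![t]\!]$.

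For the hypothesis that $\A$ is strictly wild I would reduce to the previous case. By definition of strict wildness there is an exact, fully faithful, isomorphism-reflecting functor $\rmod\KronAlg\to\A$ landing among the finitely presentable objects of $\A$ (indeed one can realise $\rmod A$ this way for every finite-dimensional algebra $A$). Since $\KronAlg$ is representation-infinite and its finite-dimensional representations contain homogeneous tubes — equivalently, the nilpotent $k[x]$-modules — the image of such a tube under this functor is, by full faithfulness, a tube among the finitely presentable objects of $\A$; so $\A$ contains a tube and the first part applies.

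The reduction to Theorem~\ref{thm:K-pure} is formal, and the substance lies in the two soft points that join it up. First, one must verify that forming $\varinjlim(\add\mathbf t)$ inside $\A$ reproduces the abstract ind-completion of $\add\mathbf t$ — that the ambient Grothendieck category neither collapses nor distorts these colimits — so that $\B'$ is genuinely equivalent, together with its intrinsic $\lambda$-pure exact structure, to the category of torsion $\Lambda$-modules; this uses that the objects of $\mathbf t$ are finitely presentable in $\A$ together with cocompleteness of $\A$, via Lemma~\ref{lem:acces_sub}. Second, and more laborious, one has to check that Sections~\ref{sec:balanced} and~\ref{sec:adic_topology} really do transport from a discrete valuation domain to a hereditary noetherian uniserial ring $\Lambda$: that ``multiplication by the prime $\p$'' is to be read throughout as ``apply the radical functor'', that the modules $P_\beta$ and Lemmas~\ref{lem:balanced} and~\ref{lem:morphism}, Proposition~\ref{prop:balanced_sequence} and Proposition~\ref{prop:direct_sums} survive in the possibly non-local, non-commutative setting, and in particular that the height-and-length arguments behind completeness in the radical-adic topology go through unchanged. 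For a homogeneous tube whose quasi-simple has endomorphism ring the base field this second difficulty disappears, since then $\Lambda$ is literally a discrete valuation domain and Theorem~\ref{thm:K-pure} applies verbatim.
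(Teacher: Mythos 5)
Your overall strategy is the paper's: locate a finitely accessible, coproduct-closed subcategory of $\A$ equivalent to the semiartinian (torsion) modules over a hereditary noetherian uniserial ring, transport Sections~\ref{sec:balanced}--\ref{sec:adic_topology} to that setting (this is exactly Proposition~\ref{prop:K-pure_general}), and conclude with Lemma~\ref{lem:context_change}. However, there are two genuine gaps. First, in the tube case your identification of the relevant ring is unjustified and in general false. The hypothesis of Definition~\ref{def:tube} hands you a single object $S$ with prescribed $\End$, $\Ext^1$ and $\Ext^2$, not an AR-theoretic rank-$n$ tube; the correct move is to take the extension closure $\ES$ of $S$, show via the simplification lemma that it is an abelian length category with unique simple $S$, and invoke Gabriel's theorem that $\ES$ is uniserial. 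But a uniserial category with residue skew-field $D$ and one-dimensional $\Ext^1$ need \emph{not} be the nilpotent representations of a cycle quiver over $D$, i.e.\ need not be torsion modules over $D[\![t]\!]$ or a ``completed tiled order'': already for $\A = \Ab$ and $S = \Z/(\p)$ the category $\ES$ is that of finite abelian $\p$-groups, governed by $\hat\Z_\p$ and not by $\mathbb{F}_\p[\![t]\!]$ (the endomorphism rings $\Z/\p^2$ and $\mathbb{F}_\p[t]/(t^2)$ of the length-two indecomposables are non-isomorphic, so the two uniserial categories are inequivalent). This is precisely why the paper constructs the ring as $R = \End_\B(E)$ for $E$ an injective envelope of $S$ in $\B = \varinjlim\ES$, cites Amdal--Ringdal/Gabriel to see that $R$ is a hereditary Asano ring dual to $\ES$ via $\Hom_\B(-,E)$, and then uses the self-duality of Lemma~\ref{lem:asano}(3) to pass from left to right modules. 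Your ``split'' shortcut via $D[\![t]\!]$ does not cover the general case, so the full transport to hereditary noetherian uniserial rings is not optional.

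Second, your reduction of the strictly wild case to the tube case does not work under Definition~\ref{def:wild}: that definition only provides a \emph{fully faithful} functor $\Phi\colon \rmod\KronAlg \to \fp\A$, with no exactness and no control on extension groups, so $\Phi(S)$ for a quasi-simple regular $S$ need not satisfy the $\Ext^1$ and $\Ext^2$ conditions of Definition~\ref{def:tube} in $\A$; full faithfulness only transports $\Hom$, not $\Ext$. The paper avoids this entirely: it extends $\Phi$ to a fully faithful, direct-limit-preserving functor $\Mod\KronAlg \to \A$ whose essential image is a finitely accessible subcategory, and then reuses the embedding (from the proof of Theorem~\ref{thm:examples}(1)) of the torsion $\powS$-modules as a finitely accessible subcategory of $\Mod\KronAlg$, so that Theorem~\ref{thm:K-pure} and two applications of Lemma~\ref{lem:context_change} finish the argument with no $\Ext$ computations in $\A$ at all. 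You should either argue the wild case this way, or strengthen your definition of wildness to include exactness and $\Ext$-isomorphisms --- but then you would be proving a weaker theorem than the one stated.
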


Let us explain the terminology first. By a \emph{locally finitely presentable} Grothendieck category $\A$ we mean just a finitely accessible Grothendieck category; the terminology is used to stay coherent with~\cite{GaUl} and work of several other people.
We denote by $\fp\A$ the full subcategory of $\A$ consisting of finitely presentable objects and by $\fl\A$ the full subcategory of all objects of finite length.

Following~\cite{Gab}, we say that a Grothendieck category $\A$ is \emph{locally finite} if it admits a set of generators of finite length. This is a stronger condition than being locally finitely presentable and in that case $\fp\A = \fl\A$.

Given a locally finitely presentable Grothendieck category, one tries to measure the complexity of the category $\fp\A$. To this end, we introduce two definitions. The first is inspired by~\cite[Chapter 3]{Ri} and the definition of 1-spherical objects in~\cite{VRos}.

\begin{Def} \label{def:tube}
A locally finitely presentable Grothendieck category $\A$ is said to \emph{contain a tube} if there is an object $S \in \fp\A$ \st
\begin{enumerate}
 \item $k = \End_\A(S)$ is a skew-field;
 \item $\Ext^1_\A(S,S) \cong k$ both as left and as right $k$-modules;
 \item $\Ext^2_\A(S,S) = 0$.
\end{enumerate}
\end{Def}

\begin{Ex} \label{expl:tube}
If $\A = \Ab$, we can take $S = \Z/(\p)$ for any prime number $\p$. Given any Dedekind domain $R$ and $\A = \Mod R$, $S$ can be any simple module. Analogously, if $\A = \mathfrak{Qco}(X)$ for a projective or affine curve~$X$ and $\p \in X$ is a non-singular closed point, the corresponding simple coherent sheaf $S$ fits the definition. Finally if $R$ is a tame hereditary artin algebra and $\A = \Mod R$, any quasi-simple regular module $S$ serves as an example.
\end{Ex}

The other notion is that of wildness, which is often taken as a synonym to intractability of the problem of classifying the objects in $\fp\A$. Here we take a variant of the concept from~\cite{KL}, which works both for module categories of finite dimensional algebras and of commutative noetherian rings.

\begin{Def} \label{def:wild}
A locally finitely presentable Grothendieck category $\A$ is called \emph{strictly wild} if there is a field $k$ \st given any finite dimensional $k$-algebra $R$ (non-commutative in general), there is a fully faithful functor $\Phi\colon \rmod R \to \fp\A$.
\end{Def}

\begin{Ex} \label{expl:wild}
If $Q$ is a finite quiver without oriented cycles which is wild (cf.~\cite[\S 1.3]{Kern}), then the category $\A = \Mod kQ$ is strictly wild by~\cite[Theorem 1.6]{Kern}. Examples of strictly wild module categories over a commutative noetherian ring are provided in~\cite[\S3]{KL}.
\end{Ex}

To give a proof of Theorem~\ref{thm:grothendieck_catg}, let us introduce a slight non-commutative generalization of discrete valuation domains, as promised at the beginning of Section~\ref{sec:balanced}. An easy non-commutative example of such a ring is $\mathbb{H}[\![x]\!]$, where $\mathbb{H}$ is the skew-field of quaternions.

\begin{Def} \label{def:asano}
A ring $R$, in general non-commutative, is called \emph{noetherian uniserial} (cf.~\cite[Appendix B]{CF}) if $R$ is left and right noetherian and both left and right ideals are linearly ordered.

A noetherian uniserial ring $R$ is called an~\emph{Asano ring} (cf.~\cite{AmRi2} and also~\cite[\S IV.4]{Gab}) if it is complete. That is, if $J$ is the Jacobson of $R$, we require
\[ R = \varprojlim R/J^n. \]
\end{Def}

Let us collect a few basic properties of these rings. Recall that a module is \emph{semiartinian} if it is a direct limit of finite length modules. For modules over a discrete valuation domain, semiartinian is the same as torsion.

\begin{Lem} \label{lem:asano}
The following hold for any noetherian uniserial ring $R$ and its Jacobson radical $J$:
\begin{enumerate}
\item There is $\p \in R$,  taking the role of a prime element, \st each left and each right ideal is of the form $J^n = \p^n R = R \p^n$.
\item $R$ is right hereditary \iff $R$ is left hereditary \iff $J$ is not nilpotent.
\item The injective envelope $I$ of $R/J$, as a right module, is in fact a faithfully balanced $R$-$R$-bimodule and the adjoint pair
\[ \Hom_R(-,I): \Mod R \rightleftharpoons R\lMod: \Hom_R(-,I) \]
restricts to a duality between the categories of left and right finite length modules.
\end{enumerate}
\end{Lem}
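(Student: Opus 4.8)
The plan is to prove the three parts in turn, leaning on the structural dichotomy that a noetherian uniserial ring is, according to whether $J$ is nilpotent, either an Artinian uniserial (hence quasi-Frobenius) ring or a non-commutative discrete valuation domain; cf.~\cite[Appendix B]{CF}, \cite[\S IV.4]{Gab}, \cite{AmRi2}. For part (1), since $R$ is left noetherian $J$ is a finitely generated left ideal, and as the left ideals form a chain it is principal, $J = R\p_1$; symmetrically $J = \p_2R$. Writing $\p_1 = \p_2 u$, $\p_2 = v\p_1$ and using that $R$ is local (an element is a unit iff it lies outside $J$), one checks $u,v$ must be units, so $J = R\p = \p R$ for a single $\p$. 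Thus $\p$ is normal and induction gives $J^n = R\p^n = \p^n R$. To see these exhaust the one-sided ideals, one uses the standard fact $\bigcap_n J^n = 0$ (obvious if $J$ is nilpotent, and the Krull intersection property in the valuation case): a nonzero left ideal $L$ is again principal, $L = Ra$, and for $n$ maximal with $a \in J^n = R\p^n$ we may write $a = w\p^n$ with $w \notin J$, hence $w$ a unit and $L = R\p^n = J^n$; the right-hand case is symmetric.

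For part (2), over the local noetherian ring $R$ a one-sided ideal is projective exactly when it is free (finitely generated projectives over a local ring are free), and by part (1) the nonzero right ideals are the $\p^n R$, with $\p^n R \cong R/\mathrm{ann}_r(\p^n)$ via $r \mapsto \p^n r$; such a cyclic module is free iff $\mathrm{ann}_r(\p^n) = 0$. If $J$ is nilpotent with $J^m = 0 \neq J^{m-1}$, $m \ge 2$, then $0 \neq \p^{m-1} \in \mathrm{ann}_r(\p)$, so $\p R$ is not free and $R$ is not right hereditary. If $J$ is not nilpotent and $\p^n r = 0$ with $r \neq 0$, write $r = \p^k s$ with $k$ maximal, so $s \notin J$ is a unit and $\p^{n+k} = \p^n r s^{-1} = 0$, contradicting non-nilpotency; hence every $\p^n R$ is free and $R$ is right hereditary. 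The same discussion on the left yields the full chain of equivalences (the degenerate case $J = 0$, i.e.\ $R$ a division ring, being tacitly excluded).

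For part (3) one reduces to the tower of Artinian quotients $R/J^n$. By part (1) each $R/J^n$ is an Artinian principal ideal ring, hence quasi-Frobenius and self-injective, so $(R/J^n)_{R/J^n}$ is its unique indecomposable injective, with simple essential socle $\cong R/J$ and length $n$. Since $R$ is noetherian, a direct limit of injectives is injective, whence $I = E((R/J)_R)$ is $J$-torsion: $I = \varinjlim_n \mathrm{ann}_I(J^n)$ with $\mathrm{ann}_I(J^n) = \Hom_R(R/J^n,I) \cong (R/J^n)_{R/J^n}$. Consequently, for any finite length right module $M$ with $MJ^n = 0$ one has $\Hom_R(M,I) = \Hom_{R/J^n}(M,(R/J^n)_{R/J^n})$, and the Morita self-duality of the quasi-Frobenius ring $R/J^n$ shows $\Hom_R(-,I)$ is an exact, length-preserving, reflexive duality between finite length right and left $R$-modules; passing to the inverse limit identifies $\End(I_R) \cong \varprojlim_n \End_{R/J^n}\big((R/J^n)_{R/J^n}\big) = \varprojlim_n R/J^n$ and symmetrically $\End({}_RI) \cong (\varprojlim_n R/J^n)^{\mathrm{op}}$, which is exactly the statement that $I$ is a faithfully balanced bimodule over $\varprojlim_n R/J^n$ — that is, over $R$ itself once $R$ is complete (an Asano ring, as in the cases $\powS$ and $\hat\Z_\p$ that concern us).

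The hard part is clearly (3): one must check that the left action coming from $\End(I_R)$ is coherent along the tower $\{R/J^n\}$, that the duality is exact and reflexive on the nose, and the $J$-torsion-ness of $I$. All of these, however, are classical facts about self-injective Artinian rings and linearly compact / noetherian serial rings, so the proof amounts to assembling them correctly; for the background we refer to~\cite[\S IV.4]{Gab} and~\cite{AmRi2}.
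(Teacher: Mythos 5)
Your proposal takes a genuinely different route from the paper, whose proof of this lemma is essentially a chain of citations: parts (1) and (2) are quoted from the appendix of the reference [CF], and for (3) the paper first quotes [CF] for the fact that $I$ is semiartinian, then reduces to the complete (Asano) case via the equivalence between semiartinian $R$-modules and semiartinian $\hat R$-modules, and finally cites [CF, Proposition 2.3]. Your self-contained reconstruction of (1) and (2) is essentially sound, and two of your side remarks are genuinely valuable: the division-ring case must indeed be excluded from (2), and the ``faithfully balanced'' claim in (3) really concerns $\varprojlim_n R/J^n$, i.e.\ it is literally correct only for Asano rings --- which is precisely the imprecision the paper's reduction step silently absorbs, and which is harmless for the application since the rings arising there are complete. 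One justification in (1) is shaky, though: you invoke ``the Krull intersection property'' to get $\bigcap_n J^n = 0$, but that is a commutative theorem and does not apply verbatim in this non-commutative setting. The fact is true for noetherian chain rings and is part of what the paper cites [CF, Corollary B.1.4] for, but since your classification of the one-sided ideals --- and hence all of your part (2) --- pivots on it, it deserves its own short argument (e.g.\ Nakayama applied to the finitely generated two-sided ideal $N=\bigcap_n J^n$).

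The one step that actually fails as written is in (3): ``since $R$ is noetherian, a direct limit of injectives is injective, whence $I$ is $J$-torsion.'' The terms $\mathrm{ann}_I(J^n)\cong R/J^n$ of your direct system are injective over $R/J^n$, \emph{not} over $R$ (over $\hat\Z_\p$, for instance, $\Z/\p^n$ is nowhere near injective), so the quoted fact gives nothing; and this is exactly the point you yourself identify as the hard part of (3). It can be repaired along the lines you set up: in the hereditary case put $T=\bigcup_n \mathrm{ann}_I(J^n)$, check $T\p=T$ using that $\mathrm{ann}_I(J^{n+1})\cong R/J^{n+1}$ is uniserial of length $n+1$ with $\mathrm{ann}_I(J^{n+1})\p=\mathrm{ann}_I(J^{n})$, conclude by Baer's criterion that $T$ is injective because every nonzero right ideal $J^k=\p^k R$ is free by your part (2), and observe that $T$ is essential in $I$ since it contains the socle; in the artinian case the claim is vacuous. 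With that inserted, your reduction to the quasi-Frobenius quotients $R/J^n$ --- including the coherence of the left actions along the tower, which you rightly flag and defer to the classical references --- goes through and recovers the same conclusion as the paper's citation-based proof.
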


\begin{proof}
(1) follows directly from~\cite[Corollary B.1.4]{CF}, while (2) is a consequence of~\cite[Corollary B.1.5]{CF}. For (3), first note that~\cite[Proposition B.2.1 and Corollary B.2.2]{CF} imply that $I$ is semiartinian. Moreover, if $\hat R$ is the completion of $R$, then the restriction of constants $\Mod\hat R \to \Mod R$ clearly induces an equivalence between the categories of semiartinian $\hat R$-modules and semiartinian $R$-modules. Therefore, we can assume that $R$ is an Asano ring and the statement follows from~\cite[Proposition 2.3]{CF}.
\end{proof}

Having this, note that given any $G \in \Mod R$ we can define $\p^\sigma G$ and $G[r]$, $r \in R$, almost as in Section~\ref{sec:balanced}, only by right multiplication, and we get submodules of $G$. In fact everything in Sections~\ref{sec:balanced} and~\ref{sec:adic_topology} goes through and we get a generalization of Theorem~\ref{thm:K-pure}:

\begin{Prop} \label{prop:K-pure_general}
Let $R$ be an hereditary noetherian uniserial ring and $\lambda$ be an uncountable regular cardinal. Then there is a semiartinian right $R$-module $P_\lambda$ whose $\lambda$-pure projective dimension is at least two.
\end{Prop}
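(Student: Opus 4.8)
The plan is to reduce Proposition~\ref{prop:K-pure_general} to the already established Theorem~\ref{thm:K-pure}, or rather to the work behind it, by observing that everything done in Sections~\ref{sec:balanced} and~\ref{sec:adic_topology} for a discrete valuation domain $R$ used only the following features of $R$: that its left and right ideals are linearly ordered and generated by powers of a single element $\p$, that $R$ is hereditary (equivalently, by Lemma~\ref{lem:asano}(2), that $J$ is not nilpotent), and that the relevant modules are \emph{semiartinian} (the exact analogue of ``torsion''). Lemma~\ref{lem:asano}(1) supplies such a $\p$; Lemma~\ref{lem:asano}(2) gives hereditariness; and the class of semiartinian right $R$-modules plays the role that the class of torsion modules played before. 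So the first step is simply to re-read the constructions defining $\p^\sigma G$, $G[r]$, nice/isotypic/balanced submodules, and the modules $P_\beta$, verifying that each argument survives verbatim when ``torsion'' is replaced by ``semiartinian'' and when one is careful to use right multiplication by $\p$ throughout (noting that $\p G = JG$ is still the radical of $G$, and that $G/JG$ is a semisimple right $R/J$-module, so $G[\p]$ is its socle).

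\textbf{Key steps.} First, I would define $P_\lambda$ over the hereditary noetherian uniserial ring $R$ by exactly Construction~\ref{constr:p_beta}, with $\p$ the prime element furnished by Lemma~\ref{lem:asano}(1); one checks as in Lemma~\ref{lem:prop_p_beta} that $P_\lambda$ is semiartinian of length $\lambda+1$ (here one uses $\p^\beta P_\beta = \langle\beta\rangle \cong R/J$, a simple module, which still holds). Second, I would re-establish the characterization of $\lambda$-balanced sequences via the functors $\Hom_R(P_\beta,-)$ (Proposition~\ref{prop:balanced_sequence}) and the completeness results of Section~\ref{sec:adic_topology}: Propositions~\ref{prop:not-cofinal}, \ref{prop:completeness} and~\ref{prop:direct_sums} all go through, the only point needing attention being that a $<\lambda$-generated semiartinian right $R$-module has length $<\lambda$ — this is the argument already given in the proof of Proposition~\ref{prop:direct_sums}, using that $R$ is noetherian so that $<\lambda$-presented equals $<\lambda$-generated. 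Third, I would run the proof of Proposition~\ref{prop:presentation} and then of Theorem~\ref{thm:K-pure} word for word: build the $\lambda$-balanced sequence $0 \to K \to T \to P_\lambda \to 0$ with $T = \bigoplus_{\beta<\lambda} P_\beta^{(\Hom(P_\beta,P_\lambda))}$, observe via the embeddings $P_\beta \hookrightarrow P_\lambda$ and $\lambda$-directedness that this sequence is $\lambda$-pure in $\Mod R$ (equivalently, using Lemma~\ref{lem:context_change}, in the finitely accessible subcategory of semiartinian $R$-modules), and conclude that $K$ is not $\lambda$-pure projective because Proposition~\ref{prop:direct_sums} would force it to be complete in the $\p^\lambda$-adic topology, whereas $\overline K/K \cong R/J \ne 0$.

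\textbf{Main obstacle.} There is no deep new obstacle; the real work is the bookkeeping of verifying non-commutativity does not break anything. The two places demanding genuine care are: (a) in Lemma~\ref{lem:prop_p_beta} and~\ref{lem:morphism}, ensuring that ``$f(\beta\beta_1\dots\beta_n) = \p x$ for some $x \in \p^{\beta_{n+1}}G$'' still makes sense — it does, because $\p^{\sigma+1}G = \p^\sigma G \cdot \p = (\p^\sigma G) J$ and one is dividing by $\p$ on the right within the linearly-ordered ideal lattice; and (b) making sure that $\End_R(P_\beta)$ and the module structure on $\Hom_R(P_\beta, P_\lambda)$ behave so that $T$ is genuinely a right $R$-module and $\phi\colon T \to P_\lambda$ is right $R$-linear — here one simply takes $T$ as an external direct sum indexed by the \emph{set} $\Hom_R(P_\beta,P_\lambda)$, with $R$ acting coordinatewise, so no bimodule subtlety arises. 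Once (a) and (b) are dispatched, the statement follows; I would therefore phrase the proof as ``the arguments of Sections~\ref{sec:balanced} and~\ref{sec:adic_topology} apply verbatim, using Lemma~\ref{lem:asano} and with ``torsion'' replaced throughout by ``semiartinian''; the conclusion is Theorem~\ref{thm:K-pure} in this generality,'' spelling out only points (a) and (b).
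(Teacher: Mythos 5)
Your proposal is correct and follows essentially the same route as the paper, which simply asserts that, using Lemma~\ref{lem:asano}, the constructions and arguments of Sections~\ref{sec:balanced} and~\ref{sec:adic_topology} carry over verbatim with ``torsion'' replaced by ``semiartinian'' and multiplication by $\p$ taken on the appropriate side. Your write-up is in fact more detailed than the paper's, correctly isolating the two points (division by $\p$ within the linearly ordered ideal lattice, and the set-indexed coproduct defining $T$) where non-commutativity could conceivably interfere.
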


Now we can give a proof for Theorem~\ref{thm:grothendieck_catg}.

\begin{proof}[Proof of Theorem~\ref{thm:grothendieck_catg}]
The main idea is easy. Given a Grothendieck category $\A$ with the assumed properties, we wish to construct a finitely accessible subcategory $\B \subseteq \A$ and an hereditary Asano ring $R$ \st $\B$ is equivalent to the category of all semiartinian right $R$-modules. If we succeed, we learn that $\lpgldim \B \ge 2$ and by Lemma~\ref{lem:context_change} also
\[ \lpgldim\A \ge 2. \]

Assume first that $\A$ contains a tube and take $S \in \fp\A$ as in Definition~\ref{def:tube}. Let $\ES \subseteq \A$ be the full subcategory of objects admitting an ascending chain of subobjects
\[ 0 = X_0 \subseteq X_1 \subseteq \cdots \subseteq X_\ell = X \]
with $X_{i+1}/X_i \cong S$ for each $0 \le i < \ell$. Using an obvious generalization of Ringel's simplification lemma~\cite[3.1.1, pp. 114--115]{Ri}, one shows that $\ES$ is an abelian subcategory and $S$ is (up to isomorphism) the unique simple object of $\ES$. The term ``simplification'' comes from the latter fact---$S$ is made simple. By a result of Gabriel~\cite[\S8.3]{Gab2}, presented in detail in~\cite[Theorem 1.7.1]{ChKr}, $\ES$ is a uniserial category. That is, $\ES$ is an abelian length category whose each indecomposable object is uniserial. Now we can put $\B = \varinjlim\ES$, this is a locally finite Grothendieck category and it is by definition a finitely accessible subcategory of $\B$.

Next, we construct $R$. Namely, let $E$ be an injective envelope of $S$ in $\B$ and $R = \End_\B(E)$. As stated in~\cite{AmRi2}, $R$ is an hereditary Asano ring. In fact, this follows from the proof of \cite[Th\'eor\`eme 4, p. 398]{Gab}, as well as the fact that
\[ \Hom_\B(-,E)\colon \ES \la R\lMod \]
gives a duality between $\ES$ and the category of finite length left $R$-modules. Composing this with the duality from Lemma~\ref{lem:asano}(3), we get an equivalence between $\ES$ and the category of finite length right $R$-modules. It is a well known fact that one can extend this to an equivalence between $\B = \varinjlim\ES$ and the category of semiartinian right $R$-modules; see for instance again the Representation Theorem~\cite[2.26]{AR}. Thus, the first case is proved.

Let us now turn to the case when $\G$ is strictly wild. Then there is a field $k$ and a fully faithful functor
\[ \rmod\KronAlg \la \fp\A, \]
which again extends to a fully faithful and direct limit preserving functor 
\[ \Mod\KronAlg \la \A. \]
Moreover, the essential image of the latter functor is a finitely accessible subcategory of $\A$. The proof of Theorem~\ref{thm:examples} then tells us that the category of all torsion $\powS$-modules embeds as a finitely accessible subcategory in $\Mod\KronAlg$, hence also in $\A$.
\end{proof}

\section{Consequences for triangulated categories}
\label{sec:conseq}

In the last section we focus on the original motivation for our results in the theory of triangulated categories and we discuss consequences for representable functors. A nice overview of the questions we are dealing with is presented in~\cite{Muro} and the general background is explained in the introduction of~\cite{ChKN}. In this context, our results give information about the derived categories of the rings and locally finitely presentable Grothendieck categories mentioned in the previous sections. 

To start with, recall that the derived category $\Der{R}$ of a ring $R$ is always a so-called compactly generated triangulated category:

\begin{Def} \label{def:comp_gen}
An object $X$ in a triangulated category $\T$ with arbitrary coproducts is called \emph{compact} if the representable functor
\[ \Hom_\T(X,-)\colon \T \la \Ab \]
sends coproducts in $\T$ to directed sums of abelian groups. The category $\T$ is said to be \emph{compactly generated} if

\begin{enumerate}
\item The full subcategory $\T^c$ of all compact objects is skeletally small;
\item Given $0 \ne X \in \T$, there is a non-zero homomorphism $C \to X$ in $\T$ with $C \in \T^c$.
\end{enumerate}
\end{Def}

Recall that $X \in \Der{R}$ is compact \iff $X$ is a \emph{perfect} complex, that is, $X$ is isomorphic in $\Der{R}$ to a bounded complex of finitely generated projective modules. One implication is very easy, while the other follows from the proof of~\cite[Proposition 6.3]{Ric}.

Compactly generated triangulated categories may be viewed as a triangulated analogue of abelian categories which are locally finitely presentable in the sense of~\cite{GaUl}. There is also a reasonable analogue of general locally presentable abelian categories, namely so called \emph{well generated} triangulated categories introduced by Neeman~\cite{Nee}. We will not define them here, we refer to~\cite[Chapter 8]{Nee} instead. We just mention that one replaces compact objects by so-called \emph{$\lambda$-compact} objects, where $\lambda$ is a regular cardinal; see~\cite[\S4.2]{Nee}. The full subcategory of $\T$ formed by all $\lambda$-compact objects is denoted by $\T^\lambda$, and by definition one has $\T^{\aleph_0} = \T^c$.

A special instance of~\cite[Lemma 4.4.5]{Nee} together with the (proof of) \cite[8.4.2]{Nee} gives us a description of $\lambda$-compact objects in~$\Der{R}$:

\begin{Lem} \label{lem:kappa-comp}  
Let $R$ be a ring and $\lambda$ an uncountable regular cardinal. Then the category $\Der{R}^\lambda$ of all $\lambda$-compact objects is the smallest full triangulated subcategory of $\Der{R}$ containing $R$ and closed under coproducts with fewer than $\lambda$ summands.
\end{Lem}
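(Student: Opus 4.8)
The plan is to identify $\Der{R}^\lambda$ with an explicitly constructed subcategory of $\Der{R}$ by invoking the cited results of Neeman in the appropriate way. Let $\T = \Der{R}$ and let $\ES \subseteq \T$ denote the smallest full triangulated subcategory containing $R$ and closed under coproducts of fewer than $\lambda$ objects; we want $\ES = \T^\lambda$. First I would recall from \cite[\S4.2]{Nee} that, since $\T$ is compactly generated (hence well generated) with the single compact generator $R$, the category $\T^\lambda$ of $\lambda$-compact objects is a well-understood object: by \cite[Lemma 4.4.5]{Nee}, $\T^\lambda$ is the smallest full subcategory of $\T$ that is closed under suspensions, under the formation of cones (i.e.\ is ``triangulated'' in the sense relevant there), contains a fixed generating set, and is closed under coproducts of size $<\lambda$; the point of that lemma is precisely that one may take the generating set to consist of the compact generator(s). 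So one direction, $\ES \subseteq \T^\lambda$, is immediate: $R$ is compact, hence $\lambda$-compact, and $\T^\lambda$ is closed under suspensions, cones, and $<\lambda$-coproducts, so it contains $\ES$.

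For the reverse inclusion $\T^\lambda \subseteq \ES$, I would invoke exactly the structure that \cite[Lemma 4.4.5]{Nee} provides: it says not merely that $\T^\lambda$ contains the subcategory generated in the above sense by the compact generators, but that it \emph{equals} it — every $\lambda$-compact object is built from $R$ using suspensions, cones, and coproducts of fewer than $\lambda$ terms. The mild subtlety is bookkeeping about what ``closed under coproducts with fewer than $\lambda$ summands'' buys us: an arbitrary iterated construction of this type lands in the localizing-type subcategory generated under $<\lambda$-coproducts, and one must check this is the same as ``smallest full triangulated subcategory closed under $<\lambda$-coproducts''. This is where \cite[8.4.2]{Nee} (or its proof) enters: for $\T = \Der{R}$ with $R$ a ring, that result pins down $\T^\lambda$ concretely, and combining it with the abstract Lemma~4.4.5 yields that $\T^\lambda$ is generated from $R$ by suspensions, desuspensions, cones, direct summands, and $<\lambda$-coproducts. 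Since a full triangulated subcategory closed under $<\lambda$-coproducts is automatically closed under summands (when $\lambda$ is infinite, an idempotent splits via a countable homotopy colimit, which is a $<\lambda$-coproduct once $\lambda > \aleph_0$ — here we use that $\lambda$ is uncountable), the closure under summands is free, and we get $\T^\lambda \subseteq \ES$.

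The step I expect to be the main obstacle is not any deep argument but rather matching conventions between the two cited results: \cite[Lemma 4.4.5]{Nee} is stated for a general well generated $\T$ with a chosen set of $\alpha$-small generators (for the regular cardinal $\alpha$ witnessing well-generation, which here is $\aleph_0$), and one has to feed it the single object $R$ and track how the resulting description of $\T^\lambda$ simplifies; meanwhile \cite[8.4.2]{Nee} is the place where the generators are actually shown to be $R$ itself rather than some larger $\aleph_0$-perfect class. I would spell out that for $\T = \Der{R}$ the $\aleph_0$-compact objects are the perfect complexes, these are generated from $R$ by finitely many cones, shifts, and summands, and then Lemma~4.4.5 upgrades this: $\T^\lambda$ is generated from that same class by $<\lambda$-coproducts (and cones, shifts), hence from $R$ alone since the perfect complexes are already in any triangulated subcategory containing $R$. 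Assembling these observations gives both inclusions and hence the claimed equality; the write-up is essentially a careful citation rather than a new construction.
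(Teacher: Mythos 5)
Your proposal is correct and takes essentially the same route as the paper, which gives no argument beyond citing Neeman's Lemma~4.4.5 (applied with the single compact generator $R$) together with the proof of \cite[8.4.2]{Nee}; your write-up is simply a more detailed unpacking of those same citations. Your observation that closure under direct summands is automatic because a triangulated subcategory closed under countable coproducts splits idempotents (using $\lambda > \aleph_0$) is exactly the point that makes the statement hold in this clean form for uncountable $\lambda$.
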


\begin{sloppypar}
Now an important question, for a compactly or well generated triangulated category, is when we can represent functors. That is, given a triangulated subcategory $\ES \subseteq \T$ and a covariant or contravariant functor
\[ F\colon \ES \la \Ab, \]
we ask for an object $X \in \T$ \st $F = \Hom_\T(X,-) |_{\ES}$ or $F = \Hom_\T(-,X) |_{\ES}$, respectively. Here and also later on, we always assume that all functors are additive. Of course, some extra assumptions are necessary since Hom-functors are also (co)homological and they transfer products or coproducts in $\T$ to products of abelian groups.
\end{sloppypar}

Simplest to state is the case when $\ES = \T$. This was studied first by Brown~\cite{Bro} in algebraic topology. We adopt the following notation:

\begin{enumerate}
\item[{[BR]}] $\T$ is said to satisfy \emph{Brown representability}, [BR] for short, if every contravariant cohomological functor $F\colon \T \to \Ab$ which sends coproducts to products is isomorphic to $\Hom_R(-,X)$ for some $X \in \T$.

\item[{[BR$^*$]}] $\T$ satisfies \emph{Brown representability for the dual} if every covariant homological functor $F'\colon \T \to \Ab$ which preserves products is isomorphic to $\Hom_R(X',-)$ for some $X' \in \T$.
\end{enumerate}

A less understood problem is the case when $\ES = \T^\lambda$, the category of $\lambda$-compact objects, for some regular cardinal $\lambda$. In topology, positive results for $\lambda=\aleph_0$ were obtained by Brown~\cite{Bro} and Adams~\cite{Adams}.
Note that the terminology below is not completely unified, we refer to~\cite[Remark 0.3]{ChKN} for an explanation.

\begin{enumerate}
\item[{[ARO$^\lambda$]}] $\T$ is said to satisfy \emph{Adams $\lambda$-representability for objects} if every contravariant cohomological functor $F\colon \T^\lambda \to \Ab$ which sends coproducts with fewer than $\lambda$ summands to products is isomorphic to $\Hom_\T(-,X) |_{\T^\lambda}$ for some $X \in \T$.

\item[{[ARM$^\lambda$]}] $\T$ satisfies \emph{Adams $\lambda$-representability for morphisms} if every natural transformation
\[ \eta\colon \Hom_\T(-,X) |_{\T^\lambda} \la \Hom_\T(-,Y) |_{\T^\lambda} \]
is induced by a (non-unique) morphism $X \to Y$ in $\T$.
\end{enumerate}

The questions about representability are intimately related to abelianizations of the triangulated category $\T$ in the sense of~\cite[Definition 6.1.3]{Nee} (see also~\cite[\S6.7]{Kr} and~\cite[\S10]{Bel}). The general motivation for approximating a triangulated category by an abelian category is that abelian categories are often better understood; see~\cite[Introduction]{Nee}. Let us give a precise definition here.

\begin{Def} \label{def:kappa_abelianization}
Let $\T$ be a triangulated category with arbitrary coproducts and $\lambda$ a regular cardinal. Denote by $\A_\lambda(\T)$ the category whose objects are contravariant functors
\[ F\colon \T^\lambda \la \Ab \]
which send coproducts with fewer than $\lambda$ summands to products, and whose morphisms are natural transformations between these functors. The \emph{$\lambda$-abelianization} of $\T$ is defined as the Yoneda functor
\[ H_\lambda\colon \T \la \A_\lambda(\T), \]
sending $X \in \T$ to $\Hom_\T(-,X) |_{\T^\lambda}$.
\end{Def}

Clearly, [ARM$^\lambda$] is equivalent to saying that $H_\lambda$ is full, and [ARO$^\lambda$] means just that every cohomological functor $\T^\lambda \to \Ab$ in $\A_\lambda(\T)$ belongs to the essential image of $H_\lambda$. What is better, $H_\lambda$ is a natural object determined by a universal property.

\begin{Prop} \label{prop:kappa_abel_univ} \cite{Nee,Kr}
Let $\T$ be a well generated triangulated category and $\lambda$ a regular cardinal \st $\T$ is generated by $\T^\lambda$. Then:

\begin{enumerate}
\item $\A_\lambda(\T)$ is a locally $\lambda$-presentable abelian category with enough projectives, exact products and coproducts and exact $\lambda$-filtered colimits. The functor $H_\lambda$ is homological, preserves products and coproducts and reflects isomorphisms.

\item Given any homological functor $H'\colon \T \to \A$ to an abelian category with coproducts and exact $\lambda$-filtered colimits \st $H'$ preserves coproducts, there exists an essentially unique coproduct preserving exact functor $E\colon \A_\lambda(\T) \to \A$ \st $H' = E \circ H_\lambda$.
\end{enumerate}
\end{Prop}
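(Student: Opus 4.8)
The plan is to treat this statement as a compilation of results of Neeman~\cite{Nee} and Krause~\cite{Kr}, and to indicate the mechanism rather than to reprove everything. First I would fix, as we may, a regular cardinal $\lambda$ such that $\T$ is generated by $\T^\lambda$, and view $\A_\lambda(\T)$ as the full subcategory of the functor category $[(\T^\lambda)^{\mathrm{op}},\Ab]$ of those additive functors $F$ for which the comparison map $F\bigl(\coprod_{i\in I}W_i\bigr)\to\prod_{i\in I}F(W_i)$ is an isomorphism for every family $(W_i)_{i\in I}$ in $\T^\lambda$ with $|I|<\lambda$. Since $\T^\lambda$ is skeletally small by Lemma~\ref{lem:kappa-comp} and $\Ab$ is locally finitely presentable, $[(\T^\lambda)^{\mathrm{op}},\Ab]$ is locally $\lambda$-presentable, and $\A_\lambda(\T)$ is the orthogonality class in it cut out by a \emph{set} of morphisms between $\lambda$-presentable objects; the same Gabriel--Ulmer type argument as in Proposition~\ref{prop:representability} (compare~\cite{AR}) then shows that $\A_\lambda(\T)$ is locally $\lambda$-presentable, reflective in the functor category, and closed there under all limits and under $\lambda$-filtered colimits. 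That $\A_\lambda(\T)$ is in addition abelian --- with kernels computed pointwise, but cokernels obtained by post-composing the pointwise cokernel with the reflector --- I would quote from~\cite{Nee}.

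For part (1) the remaining structure is extracted as follows. The representables $H_\lambda W=\Hom_\T(-,W)|_{\T^\lambda}$, $W\in\T^\lambda$, lie in $\A_\lambda(\T)$ since a Hom-group out of a coproduct is the product of the Hom-groups, and via the Yoneda isomorphism $\Hom_{\A_\lambda(\T)}(H_\lambda W,F)=F(W)$ they form a generating set: every $F$ is a pointwise-epic, hence epic, quotient of a coproduct of them. Neeman's analysis also shows the $H_\lambda W$ are projective, so $\A_\lambda(\T)$ has enough projectives. Products and $\lambda$-filtered colimits in $\A_\lambda(\T)$ are computed pointwise --- a product, or a $\lambda$-filtered colimit, of functors sending $<\lambda$-coproducts to products again does so, because in $\Ab$ products commute with products and $\lambda$-filtered colimits commute with products of fewer than $\lambda$ factors --- hence both are exact. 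Coproducts, by contrast, are \emph{not} pointwise: one must apply the reflector, and the assertion that they are nonetheless exact is the single genuinely non-formal point, which I would take from~\cite{Nee}. I expect this to be the main obstacle: soft category theory delivers the locally presentable, reflective, and even abelian picture, but the interaction of the reflector with coproducts and cokernels must be checked by hand. Finally, $H_\lambda$ is homological because a triangle $X\to Y\to Z\to\Sigma X$ induces, for each $W\in\T^\lambda$, an exact sequence $\Hom_\T(W,X)\to\Hom_\T(W,Y)\to\Hom_\T(W,Z)$; it preserves products since $\Hom_\T(-,\prod_i X_i)=\prod_i\Hom_\T(-,X_i)$ and products in $\A_\lambda(\T)$ are pointwise; it preserves coproducts because every $W\in\T^\lambda$ is $\lambda$-small, so that $\Hom_\T(W,\coprod_i X_i)$ is the $\lambda$-filtered union of the subgroups $\Hom_\T(W,\coprod_{i\in I_0}X_i)$ taken over the subsets $I_0$ with $|I_0|<\lambda$ --- which is precisely how $\coprod_i H_\lambda X_i$ is formed in $\A_\lambda(\T)$; and it reflects isomorphisms because if $H_\lambda(f)$ is invertible then the cone of $f$ admits no non-zero morphism from any object of $\T^\lambda$ and hence vanishes, $\T$ being generated by $\T^\lambda$.

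For part (2) I would invoke the standard principle that an additive functor out of an abelian category with enough projectives is determined, up to the appropriate right-exactness, by its restriction to the projective generators. Concretely, given a coproduct-preserving homological $H'\colon\T\to\A$ with $\A$ having coproducts and exact $\lambda$-filtered colimits as in the statement, one puts $E(H_\lambda W)=H'(W)$ for $W\in\T^\lambda$ --- consistent because $\Hom_{\A_\lambda(\T)}(H_\lambda W,H_\lambda W')=\Hom_\T(W,W')$ by Yoneda --- extends $E$ to coproducts of representables using that both $H'$ and $H_\lambda$ preserve coproducts, and then to an arbitrary $F\in\A_\lambda(\T)$ by picking a presentation $\bigoplus_j H_\lambda W_j\to\bigoplus_i H_\lambda W_i\to F\to 0$ with $W_i,W_j\in\T^\lambda$ and setting $E(F)=\Coker\bigl(H'(\coprod_j W_j)\to H'(\coprod_i W_i)\bigr)$. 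One then checks that $E$ is well defined and functorial, that it is exact and preserves coproducts (using that $H'$ is homological and coproduct-preserving and that $\lambda$-filtered colimits are exact in $\A$), and that $E\circ H_\lambda=H'$; uniqueness is forced because $\A_\lambda(\T)$ is generated under cokernels by the $H_\lambda W$. All of this is carried out in~\cite{Nee} and~\cite{Kr}, and once more the only non-formal ingredient is the good behaviour of coproducts and cokernels in $\A_\lambda(\T)$.
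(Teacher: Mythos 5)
Your proposal is correct and follows essentially the same route as the paper: both treat the statement as a compilation of results from~\cite{Nee} and~\cite{Kr}, with the only point argued directly being that $\lambda$-filtered colimits are exact because they are computed pointwise and commute in $\Ab$ with products of fewer than $\lambda$ factors. One small correction to your sketch: cokernels in $\A_\lambda(\T)$ are in fact also computed pointwise (exactness of products in $\Ab$ guarantees that the pointwise cokernel of a map of functors taking $<\lambda$-coproducts to products again takes $<\lambda$-coproducts to products), so no reflector is needed there --- it is only the coproducts that require the reflection, which is exactly the non-formal point you correctly isolate and defer to Neeman.
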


\begin{proof}
$\A_\lambda(\T)$ is locally $\lambda$-presentable by~\cite[\S\S6.9 and 6.10]{Kr} and has enough projectives by~\cite[Lemma 6.4.4]{Nee}. The fact that it has products and coproducts and these are exact has been proved in~\cite[\S6.3]{Nee}. The same for $\lambda$-filtered colimits follows easily from the observation that $\lambda$-filtered colimits in $\Ab$ commute with products with fewer than $\lambda$ summands. The functor $H_\lambda$ is clearly homological and preserves products. It preserves coproducts by~\cite[Proposition 6.2.6]{Nee} (see also~\cite[Proposition 6.7.1]{Kr}). It reflects isomorphisms by the assumption on $\T^\lambda$, see~\cite[Lemma 6.2.9]{Nee}. Finally, for the universal property (2) we refer to~\cite[Theorem B.2.5]{Nee} or~\cite[\S6.10]{Kr}.
\end{proof}

Let us now relate properties of the $\lambda$-abelianization of $\Der{R}$ and representability properties of $\Der{R}$ to $\lambda$-pure global dimension of $\Mod{R}$ and our Theorems~\ref{thm:examples} and~\ref{thm:grothendieck_catg}. We start with a result originally established by Neeman~\cite{Nee3} and Beligiannis~\cite[\S11.2]{Bel} for $\lambda = \aleph_0$ and extended by Muro and Ravent\'os~\cite{Rave} to higher cardinalities.

\begin{Prop} \label{prop:obstructions_repres} \cite{Nee3,Bel,Rave}
Let $\T$ be a well generated triangulated category and $\lambda$ a regular cardinal \st $\T$ is generated by $\T^\lambda$. Then:

\begin{enumerate}
\item If $F\colon \T^\lambda \to \Ab$ is a cohomological functor in $\A_\lambda(\T)$ \st $\pd_{\A_\lambda(\T)} F \le 2$, then $F \cong H_\lambda X$ for some $X \in \T$.
\item $H_\lambda\colon \T \to \A_\lambda(\T)$ is full, or equivalently \emph{[ARM$^\lambda$]} is satisfied for $\T$, \iff $\pd_{\A_\lambda(\T)} H_\lambda X \le 1$ for all $X \in \T$.
\end{enumerate}
\end{Prop}

\begin{proof}
Part (2) is a direct generalization of~\cite[Lemma 4.1 and Proposition 4.11]{Nee3}, while for part (1) one can use the argument in~\cite[Remark 11.12]{Bel}. In both cases one also uses the fact that for each $X \in \Add\T^\lambda$ and $Y \in \T$ we have $\Hom_\T(X,Y) \cong \Hom_{A_\lambda(\T)}(H_\lambda X, H_\lambda Y)$. This easily follows from Definition~\ref{def:kappa_abelianization} for $X \in \T^\lambda$, and for a general $X \in \Add\T^\lambda$ this is true since $H_\lambda$ preserves coproducts.
\end{proof}

Next we need to give a connection between projective dimension of objects in $\A_\lambda\big(\Der{R}\big)$ and the $\lambda$-pure global dimension of $\Mod{R}$. Here we use a result of Muro and Ravent\'os~\cite{Rave}, which extends~\cite[Proposition 1.4]{ChKN} by Christensen, Keller and Neeman from $\aleph_0$ to arbitrary cardinals.

\begin{Prop} \label{prop:dim_related} \cite{ChKN,Rave}
Let $R$ be a right coherent ring \st each finitely presented right $R$-module is of finite projective dimension. Let further $\T = \Der{R}$ and $\lambda$ be a regular cardinal.

\begin{enumerate}
\item Given any $R$-module $M$, the projective dimension of $H_\lambda M$ in $\A_\lambda(\T)$ equals the $\lambda$-pure projective dimension of $M$ in $\Mod{R}$. In particular we have
\[ \qquad\quad
\lpgldim \Mod{R} \le \sup\{ \pd_{\A_\lambda(\T)} H_\lambda X \mid X \in \Der{R} \}.
\]

\item If $R$ is right hereditary, then
\[ \qquad\quad
\lpgldim \Mod{R} = \sup\{ \pd_{\A_\lambda(\T)} H_\lambda X \mid X \in \Der{R} \}.
\]
\end{enumerate}
\end{Prop}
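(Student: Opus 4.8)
The plan is to analyze the restricted Yoneda functor $H_\lambda$ on stalk complexes, along the lines of~\cite{ChKN,Rave}. Write $\T=\Der R$ and regard $\Mod R$ as the full subcategory of $\T$ of complexes concentrated in degree $0$, so that $\Hom_\T(M,N)=\Hom_R(M,N)$ for modules $M,N$; put $\Phi=H_\lambda|_{\Mod R}\colon\Mod R\to\A_\lambda(\T)$. Equipping $\Mod R$ with the $\lambda$-pure exact structure $\E_\lambda$ of Proposition~\ref{prop:exact_catg}, I would first establish three properties of $\Phi$: \emph{(i)} $\Phi$ carries $\lambda$-pure exact sequences to short exact sequences; \emph{(ii)} $\Phi$ carries $\lambda$-pure projective modules to projective objects of $\A_\lambda(\T)$; \emph{(iii)} $\Phi$ induces isomorphisms $\Ext^i_{\E_\lambda}(M,N)\cong\Ext^i_{\A_\lambda(\T)}(\Phi M,\Phi N)$ for all modules $M,N$ and all $i\ge 0$.

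For \emph{(i)} I would use that a $\lambda$-pure exact sequence is a $\lambda$-directed colimit of split short exact sequences (express the right-hand term as a $\lambda$-directed colimit of $<\lambda$-presented modules and pull the extension back; $\lambda$-purity splits each pull-back), and then note that $H_\lambda$ preserves coproducts and $\lambda$-directed colimits of modules computed in $\T$ (the latter because the objects of $\T^\lambda$ are $\lambda$-compact), while $\A_\lambda(\T)$ has exact $\lambda$-filtered colimits by Proposition~\ref{prop:kappa_abel_univ}. For \emph{(ii)} the crucial point is that every $<\lambda$-presented $R$-module is a $\lambda$-compact object of $\T$: this is where the hypotheses on $R$ enter, since right coherence together with finiteness of the projective dimension of finitely presented modules makes every finitely presented module a perfect complex, hence compact, and a $<\lambda$-presented module is then a homotopy colimit of fewer than $\lambda$ of its finitely presented subquotients, so it lies in $\T^\lambda$ by Lemma~\ref{lem:kappa-comp}; thus $\Phi$ of such a module is representable, hence projective, and \emph{(ii)} follows because $\lambda$-pure projectives are summands of coproducts of $<\lambda$-presented modules and $\Phi$ preserves coproducts and summands. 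For \emph{(iii)} I would fix a $\lambda$-pure projective resolution $Q_\bullet\to M$, which by \emph{(i)} and \emph{(ii)} is sent by $\Phi$ to a projective resolution of $\Phi M$; since each $Q_i\in\Add\T^\lambda$, the fully-faithfulness observation recorded in the proof of Proposition~\ref{prop:obstructions_repres} gives $\Hom_R(Q_i,N)=\Hom_\T(Q_i,N)\cong\Hom_{\A_\lambda(\T)}(\Phi Q_i,\Phi N)$ naturally, so the $\Hom$-complexes computing the two families of Ext-groups are isomorphic.

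Granting \emph{(i)}--\emph{(iii)}, part~(1) would follow as follows. The $n$-th syzygy of the projective resolution $\Phi Q_\bullet$ of $\Phi M$ is $\Phi$ applied to the $n$-th $\lambda$-pure syzygy $\Omega^n M$ of $M$, so $\pd_{\A_\lambda(\T)}\Phi M\le n$ exactly when $\Phi(\Omega^n M)$ is projective. On the other hand $\Omega^n M$ is $\lambda$-pure projective precisely when the $\lambda$-pure exact sequence $0\to\Omega^{n+1}M\to Q_n\to\Omega^n M\to 0$ splits, that is, when its class in $\Ext^1_{\E_\lambda}(\Omega^n M,\Omega^{n+1}M)$ vanishes; by \emph{(iii)} this vanishes precisely when the class of its (split) $\Phi$-image does, i.e.\ as soon as $\Phi(\Omega^n M)$ is projective. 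Hence $\lppd_{\Mod R}M\le n$ if and only if $\pd_{\A_\lambda(\T)}H_\lambda M\le n$, which is the first assertion; the displayed inequality then follows by letting $M$ range over all modules (each being the stalk complex $X=M\in\T$) and recalling that $\lpgldim\Mod R=\sup_M\lppd_{\Mod R}M$.

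For part~(2), I would use that over a right hereditary ring $\Der R$ is formal, i.e.\ every $X$ decomposes as $\bigoplus_{n\in\Z}H^n(X)[-n]$ (since $\Ext^{\ge 2}_R=0$). As $H_\lambda$ preserves coproducts and $H_\lambda(Y[-n])$ is $H_\lambda(Y)$ precomposed with the $n$-th power of the shift autoequivalence of $\T^\lambda$, which is an exact autoequivalence of $\A_\lambda(\T)$, one gets $\pd_{\A_\lambda(\T)}H_\lambda X=\sup_n\pd_{\A_\lambda(\T)}H_\lambda H^n(X)=\sup_n\lppd_{\Mod R}H^n(X)\le\lpgldim\Mod R$, and combined with part~(1) this yields the equality. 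The hard part will be \emph{(ii)}: that $<\lambda$-presented modules are $\lambda$-compact in $\Der R$ — equivalently, that $\Phi$ preserves pure projectives — fails without the finiteness hypotheses on $R$ and is exactly the technical core extracted from~\cite{ChKN} for $\lambda=\aleph_0$ and from~\cite{Rave} in general; one must also be careful that colimits formed in $\T$ agree with those in $\Mod R$ and that $H_\lambda$ commutes with $\lambda$-filtered colimits, which is where Proposition~\ref{prop:kappa_abel_univ} is used.
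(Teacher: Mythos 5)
Your proposal is correct, and its skeleton coincides with the paper's: both restrict $H_\lambda$ to stalk complexes, both rest on the key input that $<\lambda$-presented modules are $\lambda$-compact in $\Der{R}$ (exactly where coherence and the finiteness of projective dimensions enter, via Lemma~\ref{lem:kappa-comp}), both deduce that the restricted functor sends $\lambda$-pure exact sequences to exact sequences and $\lambda$-pure projectives to projectives by writing pure exact sequences as $\lambda$-directed colimits of split ones, and both handle part~(2) via the decomposition $X\cong\bigoplus_n H^n(X)[-n]$ over a hereditary ring. The one genuine divergence is the mechanism for the inequality $\lppd M\le\pd_{\A_\lambda(\T)}H_\lambda M$. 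The paper introduces, alongside $H'_\lambda=H_\lambda|_{\Mod R}$, the evaluation functors $H^i\colon\A_\lambda(\T)\to\Mod{R}$, $F\mapsto F(R[-i])$, shows that these too preserve $\lambda$-pure exactness and send projectives to $\lambda$-pure projectives, and uses $H^0\circ H'_\lambda=1_{\Mod R}$ to transport a projective resolution of $H_\lambda M$ back to a $\lambda$-pure projective resolution of $M$. You instead prove that $\Phi$ induces isomorphisms on all relative $\Ext$ groups (your step \emph{(iii)}, which amounts to full faithfulness of $H_\lambda$ on $\Add\T^\lambda$, the fact also recorded in the proof of Proposition~\ref{prop:obstructions_repres}) and read off the reverse inequality from the vanishing of the class of the syzygy sequence. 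Both mechanisms are sound; the paper's two-sided setup gives the transfer without any $\Ext$ comparison and makes the faithfulness of $H'_\lambda$ manifest, while yours stays entirely on the $\A_\lambda(\T)$ side. The only place where you are sketchy is the realization of a $<\lambda$-presented module as a homotopy colimit of fewer than $\lambda$ compact objects; the paper is equally terse there, but this is the technical core imported from~\cite{ChKN,Rave} and deserves a precise argument.
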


\begin{proof}
Since~\cite{Rave} was not generally available at the time of writing of this text, we sketch an argument very similar to the one in~\cite{ChKN}. We consider the restriction
\[
H'_\lambda\colon \Mod{R} \la \A_\lambda(\T), \\
\]
of $H_\lambda$ to $\Mod{R}$, so that $H'_\lambda(M) = \Hom_\T(-,M) |_{\T^\lambda}$, and for each $i \in \Z$ the ``extended homology'' functors
\[
H^i\colon \A_\lambda(\T) \la \Mod{R},
\]
defined as $H^i(F) = F(R[-i])$. Note that $H^i \circ H_\lambda\colon \Der{R} \to \Mod{R}$ is the usual $i$-th homology functor and that $H^0 \circ H'_\lambda = 1_{\Mod{R}}$.

With this notation, we claim that $H'_\lambda$ as well as all $H^i$ commute with $\lambda$-directed (so also $\lambda$-filtered) colimits. In fact, a straightforward modification of the proof for \cite[Lemma 1.3]{ChKN} applies. For $H'_\lambda$, we must prove that given any $\lambda$-directed system $(M_i \mid i \in I)$ of $R$-modules and any $\lambda$-compact object $P \in \T$, the natural morphism
\[
\varinjlim \Hom_\T(P,M_i) \la \Hom_\T(P, \varinjlim M_i)
\]
is an isomorphism. This is clear for $P = R[i]$, $i \in \Z$, and for arbitrary $\lambda$-compact object $P$ we only use Lemma~\ref{lem:kappa-comp} and the fact that $\lambda$-directed colimits of abelian groups commute with products of fewer than $\lambda$ summands. The functors $H^i$ commute with $\lambda$-directed colimits since they are evaluation functors and $\lambda$-directed colimits are computed componentwise in $\A_\lambda(\T)$. This proves the claim.

Further, $H'_\lambda$ as well as all $H^i$ preserve coproducts and transform $\lambda$-pure exact sequences into $\lambda$-pure exact sequences. Namely, for coproducts of $\lambda$-presentable objects with fewer than $\lambda$ summands this follows from their construction (in $\A_\lambda(\T)$ they are \emph{not} computed componentwise, see~\cite[\S6.3]{Nee}!), and this extends to arbitrary coproducts via $\lambda$-directed colimits. For $\lambda$-pure exact sequences, we use the well-known fact that each $\lambda$-pure exact sequence is a $\lambda$-directed colimit of split exact sequences.

Finally, we check that $H'_\lambda$ sends $\lambda$-pure projective modules to projective objects and all $H^i$ send projective objects to $\lambda$-pure projective modules. This is an easy consequence of the fact that each $M \in \Mod{R}$ is represented by its projective resolution in $\T = \Der{R}$. Hence, if $M$ is a $<\lambda$-presented module, it is $\lambda$-compact in $\Der{R}$ using the assumptions on $R$ and Lemma~\ref{lem:kappa-comp}. On the other hand, if $X \in \T$ is $\lambda$-compact, the all its homologies are $<\lambda$-presented, again using the coherency of $R$ and Lemma~\ref{lem:kappa-comp}.

Having established all these properties, (1) and (2) are easy. They follow by exactly the same proof as for~\cite[Proposition 1.4]{ChKN}.
\end{proof}

Now we can give an application of our Theorem~\ref{thm:examples}. Partial results in this direction have been obtained by Muro and Ravent\'os, who studied wild hereditary algebras (for which our results also work, cf.\ Theorem~\ref{thm:grothendieck_catg}).

\begin{Cor} \label{cor:BA-repres}
Let $R$ be one of the rings from Theorem~\ref{thm:examples}, that is $R = \KronAlg$ or $R = k[x,y]$ for $k$ uncountable, or $R = \powSxy$ for any field $k$. Then there is \emph{no} regular cardinal $\lambda$ for which $\Der{R}$ satisfies \emph{[ARM$^\lambda$]}, the Adams $\lambda$-representability for morphisms. Rephrasing this, none of the $\lambda$-abelianization functors $H_\lambda\colon \Der{R} \to \A_\lambda\big(\Der{R}\big)$ is full.
\end{Cor}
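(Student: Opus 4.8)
The plan is to combine Theorem~\ref{thm:examples} with the homological reformulation of [ARM$^\lambda$] supplied by Propositions~\ref{prop:obstructions_repres} and~\ref{prop:dim_related}. Fix one of the rings $R$ in the statement and an arbitrary infinite regular cardinal $\lambda$, and write $\T = \Der{R}$. First I would record that $\T$ is compactly generated, hence well generated, and that it is generated by $\T^\lambda$: indeed $\T^{\aleph_0} = \T^c \subseteq \T^\lambda$, and $\T^c$ already generates $\T$ (for $\lambda$ uncountable one can also invoke Lemma~\ref{lem:kappa-comp}, which shows $R \in \T^\lambda$). Thus the hypotheses of Propositions~\ref{prop:kappa_abel_univ}, \ref{prop:obstructions_repres} and~\ref{prop:dim_related} are in force.

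Next I would verify the standing hypotheses of Proposition~\ref{prop:dim_related}, namely that $R$ is right coherent and that every finitely presented right $R$-module has finite projective dimension. For $R = \KronAlg$ this is immediate, since $R$ is a finite dimensional hereditary algebra, so $R$ is noetherian and $\mathrm{gl.dim}\,R = 1$. For $R = k[x,y]$ and $R = \powSxy$ the ring is commutative noetherian and regular of Krull dimension $2$, hence coherent with $\mathrm{gl.dim}\,R = 2 < \infty$. Consequently Proposition~\ref{prop:dim_related}(1) applies and yields
\[
\lpgldim \Mod{R} \le \sup\{\, \pd_{\A_\lambda(\T)} H_\lambda X \mid X \in \T \,\}.
\]

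Now Theorem~\ref{thm:examples} gives $\lpgldim \Mod{R} \ge 2$ for this $\lambda$, so the supremum on the right is at least $2$. Since projective dimension takes values in $\{0,1,2,\dots,\infty\}$, a supremum of such values can only be $\le 1$ if every value is $\le 1$; hence there is some $X_0 \in \T$ with $\pd_{\A_\lambda(\T)} H_\lambda X_0 \ge 2$. By Proposition~\ref{prop:obstructions_repres}(2), the functor $H_\lambda$ is full (equivalently, [ARM$^\lambda$] holds for $\T$) precisely when $\pd_{\A_\lambda(\T)} H_\lambda X \le 1$ for every $X \in \T$; the object $X_0$ therefore witnesses that $H_\lambda$ is not full and that [ARM$^\lambda$] fails. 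As $\lambda$ was an arbitrary infinite regular cardinal, no $\lambda$ works, which is exactly the assertion of the corollary.

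I do not expect any genuine obstacle here: the argument is essentially bookkeeping once the earlier results are available. The only points that require a little care are the verification that the three rings satisfy the coherence-and-finite-projective-dimension hypothesis of Proposition~\ref{prop:dim_related} (all standard) and the remark that $\T$ is generated by $\T^\lambda$ for every regular $\lambda$, which is needed to invoke the abelianization machinery. Note also that only the inequality~(1) of Proposition~\ref{prop:dim_related} is used; the sharper equality~(2), available for the hereditary ring $\KronAlg$, is not needed for the non-fullness conclusion.
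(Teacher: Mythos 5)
Your argument is correct and is precisely the one the paper intends: the corollary is stated as an immediate combination of Theorem~\ref{thm:examples} with Propositions~\ref{prop:obstructions_repres}(2) and~\ref{prop:dim_related}(1), and your verification of the coherence and finite-projective-dimension hypotheses for the three rings fills in exactly the routine details the paper leaves implicit.
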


\begin{Rem} \label{rem:BA-repres}
Based on~\cite{Ros} (see also~\cite[Conjecture 1.27]{Nee2}), it was believed that for nice enough (and perhaps even for all) well generated triangulated categories $\T$ there was a regular cardinal $\lambda$ \st $H_\lambda\colon \T \to \A_\lambda(\T)$ was full. ``Nice enough'' means that $\T$ is the homotopy category of a combinatorial stable model category in the sense of~\cite{HPS}; in this case $\T$ is automatically well generated by~\cite{Ros}. Here we present a counterexample to that belief, since $\Der{R}$ for any ring $R$ is certainly ``nice enough.''
\end{Rem}

\begin{Rem} \label{rem:BA-repres-objects}
Having ruled out [ARM$^\lambda$] for all regular cardinals for some $\Der{R}$, we say nothing about [ARO$^\lambda$], the Adams $\lambda$-representability for objects. Giving a counterexample to that for $\lambda = \aleph_0$ in~\cite{ChKN} was a rather delicate matter. Propositions~\ref{prop:obstructions_repres} and~\ref{prop:dim_related} suggest that one should look for $\lambda$ and an $R$-module $M$ \st $\lppd M \ge 3$, but we are at the present time not aware of any techniques for achieving this goal.
\end{Rem}

Finally, we mention a result by Neeman which brought considerable interest to studying fullness of $H_\lambda$. Having established [BR] for any well generated triangulated category $\T$ in~\cite{Nee}, he obtained in~\cite[Theorems 1.11 and 1.17]{Nee2} a result about [BR$^*$], the Brown representability for the dual. Here we in fact do not state the most general version of the result, but only a simplified version relevant for us.

\begin{Prop} \label{prop:rosicky_to_brown}
Let $\T$ be a well generated triangulated category and suppose there is a regular cardinal $\lambda$ such that $H_\lambda$ is full. Then $\T$ satisfies both \emph{[BR]} and~\emph{[BR$^*$]}.
\end{Prop}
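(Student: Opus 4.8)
The plan is to deduce Proposition~\ref{prop:rosicky_to_brown} from Neeman's results in~\cite{Nee2} by checking that the hypothesis there—the existence of a \emph{Rosick\'y functor}—is satisfied under our assumption that $H_\lambda$ is full. Recall that a Rosick\'y functor for a well generated triangulated category $\T$ is a homological functor $H\colon \T \to \A$ into an abelian category which preserves products and coproducts, reflects isomorphisms, and whose restriction to $\T^\lambda$ (for some regular $\lambda$) is full with values in the $\lambda$-presentable objects, plus the target $\A$ satisfies suitable exactness and size conditions. So the first step is to take the $\lambda$-abelianization $H_\lambda\colon \T \to \A_\lambda(\T)$ and verify it has all these properties.

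Concretely, I would proceed as follows. First, apply Proposition~\ref{prop:kappa_abel_univ}: since $\T$ is well generated and (after possibly enlarging $\lambda$) generated by $\T^\lambda$, the category $\A_\lambda(\T)$ is locally $\lambda$-presentable abelian with enough projectives, exact products, coproducts and $\lambda$-filtered colimits, and $H_\lambda$ is homological, preserves products and coproducts, and reflects isomorphisms. Second, invoke the standing hypothesis that $H_\lambda$ is full. Third, check that $H_\lambda X$ is a $\lambda$-presentable object of $\A_\lambda(\T)$ for $X \in \T^\lambda$; this is built into the construction (the representable functors $\Hom_\T(-,X)|_{\T^\lambda}$ with $X$ $\lambda$-compact are precisely the projective generators and are $\lambda$-presentable). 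Together these say exactly that $H_\lambda$ is a Rosick\'y functor in the sense of~\cite{Nee2}. Then one simply quotes Neeman's~\cite[Theorems 1.11 and 1.17]{Nee2}: a well generated triangulated category admitting a Rosick\'y functor satisfies both [BR] and [BR$^*$]. (The [BR] half is in fact already known for \emph{all} well generated $\T$ by~\cite{Nee}, so the real content is [BR$^*$].)

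The only genuine subtlety—and the step I would treat most carefully—is matching definitions: making sure that ``$H_\lambda$ is full'' in our sense (i.e.\ [ARM$^\lambda$], fullness on all of $\T$) really yields the fullness condition Neeman requires for a Rosick\'y functor, which is phrased in terms of the restriction to $\T^\lambda$ landing among $\lambda$-presentables, and that no additional hypothesis on $\A_\lambda(\T)$ (such as Grothendieck-ness or a generator of projectives of bounded size) is missing. All of these are supplied by Proposition~\ref{prop:kappa_abel_univ}, so the verification is routine; I would present it as a short paragraph citing that proposition and then~\cite[Theorems 1.11 and 1.17]{Nee2}. Since the paper only states ``a simplified version relevant for us,'' I would not reprove Neeman's theorems, only assemble the hypotheses and cite.

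\begin{proof}
After possibly replacing $\lambda$ by a larger regular cardinal we may assume $\T$ is generated by $\T^\lambda$; enlarging $\lambda$ does not affect the existence of a full $H_\lambda$, since if $H_\lambda$ is full then so is $H_{\lambda'}$ for $\lambda' \ge \lambda$ (see~\cite{Nee2}). By Proposition~\ref{prop:kappa_abel_univ}, $\A_\lambda(\T)$ is a locally $\lambda$-presentable abelian category with enough projectives, exact products and coproducts and exact $\lambda$-filtered colimits, and $H_\lambda\colon \T \to \A_\lambda(\T)$ is a homological functor which preserves products and coproducts and reflects isomorphisms. For every $X \in \T^\lambda$ the object $H_\lambda X = \Hom_\T(-,X)|_{\T^\lambda}$ is projective and $\lambda$-presentable in $\A_\lambda(\T)$, and these objects form a generating set. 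By hypothesis $H_\lambda$ is full. Hence $H_\lambda$ is a Rosick\'y functor for $\T$ in the sense of~\cite{Nee2}. The conclusion now follows from~\cite[Theorems 1.11 and 1.17]{Nee2}, which assert that a well generated triangulated category admitting a Rosick\'y functor satisfies both [BR] and [BR$^*$]. (That [BR] holds is in any case already guaranteed for every well generated $\T$ by~\cite{Nee}.)
\end{proof}
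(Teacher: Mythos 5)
Your overall strategy is exactly the one the paper intends: the paper offers no proof of Proposition~\ref{prop:rosicky_to_brown} beyond citing \cite[Theorems 1.11 and 1.17]{Nee2}, and your assembly of the hypotheses of those theorems out of Proposition~\ref{prop:kappa_abel_univ} (local presentability, enough projectives, exact products and coproducts, $H_\lambda$ homological, product- and coproduct-preserving, isomorphism-reflecting, plus the assumed fullness) is the right routine verification.

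There is, however, one genuinely false step: the opening claim that ``if $H_\lambda$ is full then so is $H_{\lambda'}$ for $\lambda' \ge \lambda$.'' This is contradicted by the paper itself: the introduction records that for $\T = \Der{\Ab}$ the only full $\lambda$-abelianization is $H_{\aleph_0}$, so fullness is emphatically not inherited by larger cardinals. (Indeed, the content of Sections~\ref{sec:balanced}--\ref{sec:adic_topology} combined with Propositions~\ref{prop:obstructions_repres} and~\ref{prop:dim_related} is precisely that for a discrete valuation domain $R$ the functor $H_{\aleph_0}$ on $\Der{R}$ is full while $H_\lambda$ fails to be full for every uncountable regular $\lambda$.) Fortunately the reduction you wanted this for is unnecessary: the hypothesis of the proposition has to be read as requiring that $\T$ be generated by $\T^\lambda$ for the given $\lambda$. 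Without that, $H_\lambda$ need not reflect isomorphisms---for a well generated but not compactly generated $\T$ one can even have $\T^{\aleph_0}=0$, making $H_{\aleph_0}$ vacuously full and the statement false as literally written---and reflection of isomorphisms is one of the conditions Neeman imposes on a Rosick\'y functor. So delete the enlargement-of-$\lambda$ sentence, state explicitly that $\T^\lambda$ is assumed to generate $\T$ (as in Proposition~\ref{prop:kappa_abel_univ}), and the rest of your argument stands.
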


As we now know, there need not exist such $\lambda$. However, this does not mean that [BR$^*$] is not satisfied for $\T$, existence of such $\lambda$ is only sufficient. Another sufficient condition is that $\A_\lambda(\T)$ has enough injectives for some $\lambda$, see~\cite[Theorem 8.6.1]{Nee}. The latter is satisfied for any compactly generated triangulated category $\T$ since then $\A_{\aleph_0}(\T)$ is a Grothendieck category. In particular, [BR$^*$] is satisfied for $\Der{R}$ for any ring $R$. On the other hand, examples of $\A_\lambda(\T)$ without enough injectives were given in~\cite[Appendix C.4]{Nee}, and the construction interestingly enough also uses Walker's modules $P_\beta$. To conclude with, the problem whether [BR$^*$] holds or not for any well generated triangulated category is to our best knowledge open.

\bibliographystyle{abbrv}
\bibliography{counterex_rosicky_bib}

\end{document}